\documentclass[12pt,twoside]{article}
\usepackage{amsmath,amsfonts,amssymb,a4,enumitem,epsfig,array,psfrag}
\usepackage{url}
\usepackage{amsthm}

\usepackage{chngcntr} 
\usepackage{color}

\usepackage{subfig}

\usepackage[section]{placeins}

\usepackage[colorlinks=true]{hyperref}

\graphicspath{{figures/}}

\newtheorem{theo}{Theorem}

\newtheorem{prop}{Proposition}[section]
\newtheorem{coro}[prop]{Corollary}

\newtheorem{lemma}[prop]{Lemma}

\theoremstyle{definition}
\newtheorem{example}[prop]{Example}

\theoremstyle{remark}
\newtheorem{case}{Case}
\counterwithin*{case}{theo}
\counterwithin*{case}{prop}
\counterwithin*{case}{fact}

\newtheorem{rem}[prop]{Remark}

\newcommand{\half}{{\frac{1}{2}}}

\newcommand{\ZZ}{{\mathbb{Z}}}

\newcommand{\RR}{{\mathbb{R}}}

\newcommand{\Tw}{\operatorname{Tw}}

\newcommand{\sgn}{\operatorname{sgn}}

\newcommand{\num}{}


\parskip=4pt plus 8pt

\makeatletter
\newcommand{\subjclass}[2][2010]{%
  \let\@oldtitle\@title%
  \gdef\@title{\@oldtitle\footnotetext{#1 \emph{Mathematics Subject Classification.} #2}}%
}
\newcommand{\keywords}[1]{%
  \let\@@oldtitle\@title%
  \gdef\@title{\@@oldtitle\footnotetext{\emph{Key words and phrases.} #1.}}%
}
\makeatother

\date{November 2, 2014}
\pagestyle{myheadings}
\makeatletter
\markboth{\hfill Flip invariance for tilings of two-story regions --- \@date}{Flip invariance for tilings of two-story regions --- \@date \hfill}
\makeatother

\begin{document}
\title{Flip invariance for domino tilings of three-dimensional regions with two floors}
\author{Pedro H. Milet \and Nicolau C. Saldanha}

\subjclass{Primary 05B45; Secondary 52C20, 52C22, 05C70.}
\keywords{Three-dimensional tilings, 
dominoes,
dimers,
flip accessibility,
connectivity by local moves}

\maketitle

\begin{abstract}
We investigate tilings of cubiculated regions with two simply connected floors by $2 \times 1 \times 1$ bricks. More precisely, we study the flip connected component for such tilings, and provide an algebraic invariant that ``almost'' characterizes the flip connected components of such regions, in a sense that we discuss in the paper. We also introduce a new local move, the trit, which, together with the flip, connects the space of domino tilings when the two floors are identical.
\end{abstract}

\section{Introduction}


Towards the end of the twentieth century, a lot has been said about tilings of two-dimensional regions by a number of different pieces: in particular, the so-called \emph{domino} and \emph{lozenge} tilings have received a lot of attention.

Kasteleyn \cite{Kasteleyn19611209} showed that the number of domino tilings of a plane region can be calculated via the determinant of a matrix. Conway \cite{conway1990tiling} discovered a technique using groups, that in a number of interesting cases can be used to decide whether a given region can be tesselated by a set of given tiles. Thurston \cite{thurston1990} introduced height functions, and proposed a linear time algorithm for solving the problem of tileability of simply connected plane regions by dominoes. 
In a more probabilistic direction, Jockusch, Propp and Shor \cite{jockusch1998random,cohn1996local} studied random tilings of the so-called Aztec Diamond (introduced in \cite{elkies1992alternating}), and showed the Arctic Circle Theorem. Richard Kenyon and Andrei Okounkov also studied random tilings and, in particular, their relation to Harnack curves \cite{kenyonokounkov2006dimers,kenyonokounkov2006planar}.
The concept of a flip is important in the context of dominoes as well as in that of lozenges. In both cases, two tilings of a simply connected region can always be joined by a sequence of flips (see \cite{saldanhatomei1998overview} for an overview). Also, see \cite{saldanhatomei1995spaces} for considerations on flip connectivity in more general two-dimensional regions.     

However, in comparison, much less is known about tilings of three-dimensional regions. Hammersley \cite{hammersley1966limit} proved results concerning the asymptotic behavior of the number of brick tilings of a $d$-dimensional box when all dimensions go to infinity. In particular, his results imply that the limit $\ell_3 := \lim_{n \to \infty} \frac{\log f(2n)}{(2n)^3}$, where $f(n)$ is the number of tilings of an $n \times n \times n$ box, exists and is finite; as far as we know, its exact value is not yet known, but several upper and lower bounds have been established for $\ell_3$ (see \cite{ciucu1998improved} and \cite{friedland2005theory} for more information on this topic). 
Randall and Yngve \cite{randall2000random} considered tilings of ``Aztec'' octahedral and tetrahedral regions with triangular prisms, which generalize domino tilings to three dimensions; they were able to carry over to this setting many of the interesting properties from two dimensions, e.g. height functions and connectivity by local moves. Linde, Moore and Nordahl \cite{linde2001rhombus} considered families of tilings that generalize rhombus (or lozenge) tilings to $n$ dimensions, for any $n \geq 3$. 
Bodini \cite{bodini2007tiling} considered tileability problems of pyramidal polycubes. Pak and Yang \cite{pak2013complexity} studied the complexity of the problems of tileability and counting for domino tilings in three and higher dimensions, and proved some hardness results in this respect.

\begin{figure}[ht]	
	\centering
    \includegraphics[width=0.4\textwidth]{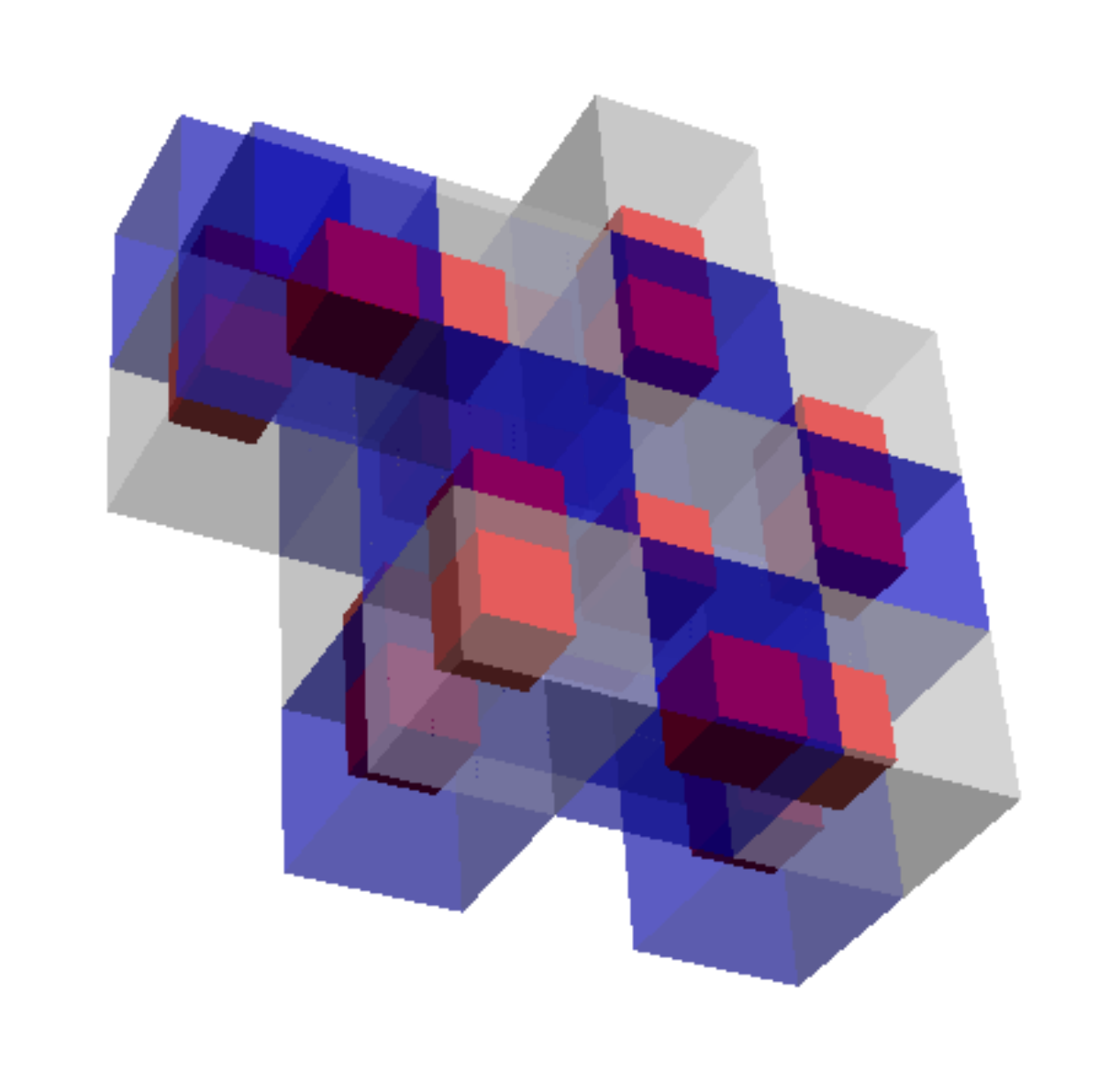}
		\caption{A domino brick tiling of a region with two floors.}
			\label{fig:tilingTwoFloors_example}	
\end{figure}

If $R$ is a cubiculated region in $\RR^3$, a \emph{floor} of $R$ is $R \cap (\RR^2 \times [n,n+1])$, for some $n \in \ZZ$. We say that $R$ is a \emph{two-story region} when it has exactly two non-empty floors, $R \cap (\RR^2 \times [0,1])$ and $R \cap (\RR^2 \times [1,2])$, and both these floors are connected and simply connected. A two-story region is called a \emph{duplex region} if both floors are identical. 

In this paper, we study tilings of two-story regions by \emph{domino brick} pieces (we often call these pieces \emph{dimers}), which are simply $2 \times 1 \times 1$ rectangular cuboids. An example of such a tiling is shown in Figure \ref{fig:tilingTwoFloors_example}. The general case (with an arbitrary number of floors) is discussed in \cite{segundoartigo}.

A quick glance at Figure \ref{fig:tilingTwoFloors_example} shows that while the 3D representation of tilings may be attractive, it is somewhat difficult to work with. Hence, we choose instead to work with a 2D representation that resembles floor plans, which is shown in Figure \ref{fig:notation2Dexample}. The leftmost floor is the top floor. 
The dimers that are parallel to the $x$ or $y$ axis are represented as 2D dominoes, since they are contained in a single floor. The $z-$axis dimers are represented as circles, with the following convention: if the dimer connects a floor with the upper floor, the circle is painted red; otherwise, it is painted white. Thus, for example, in Figure \ref{fig:notation2Dexample}, each of the four white circles on the top floor represents the same dimer as the red circles on the floor directly below it. 

\begin{figure}[ht]
\centering
\def\svgwidth{0.8\columnwidth}
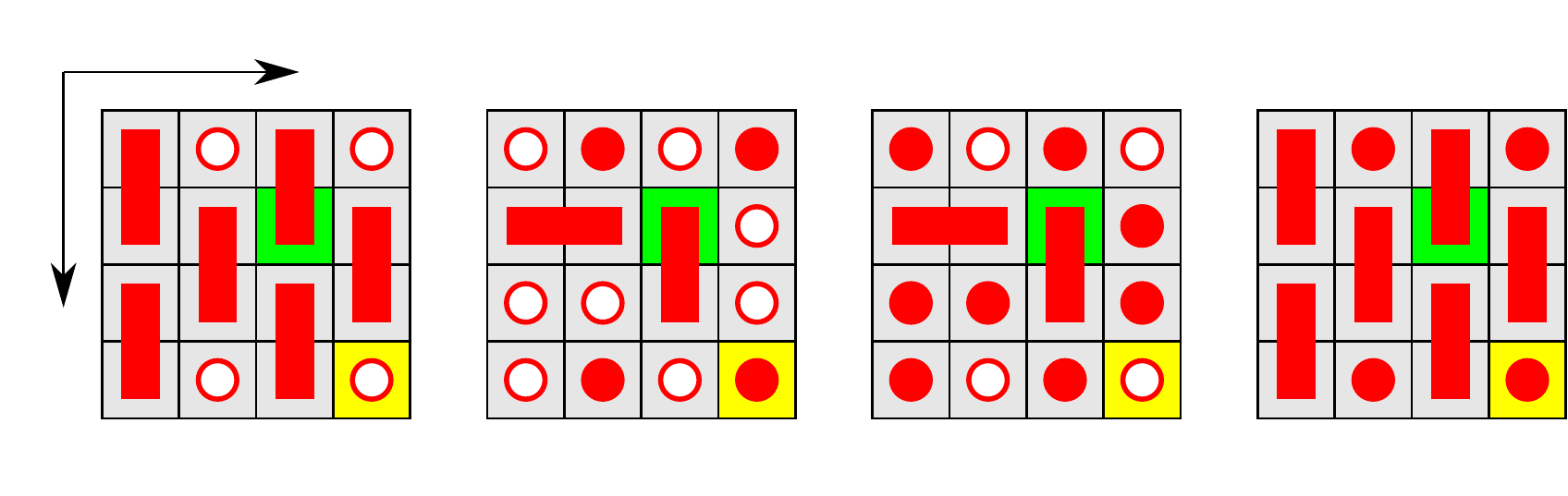
\caption{A tiling of a $4\times 4 \times 4$ box in our notation. The $x$ and $y$ axis are drawn, and the $z$ axis points downwards from the top floor, i.e., the points in the lower floors have higher $z$ coordinates. The squares highlighted in yellow represent cubes whose centers have the same $x$ and $y$ coordinates. Notice the top two yellow cubes are connected by a $z$ dimer, as well as the bottom two. The squares highlighted in green also represent cubes whose center have the same $x$ and $y$ coordinates, but the dimers involving these cubes are not $z$ dimers.}%
\label{fig:notation2Dexample}%
\end{figure}

A key element in our study is the concept of a \emph{flip}, which is a straightforward generalization of the two-dimensional one. We perform a flip on a tiling by removing two (adjacent and parallel) domino bricks and placing them back in the only possible different position. The removed pieces form a $2 \times 2 \times 1$ rectangular cuboid, in one of three possible positions (see Figure \ref{fig:flipExample}). 

In view of the situation with plane tilings, one might expect that the space of domino brick tilings of a simple three-dimensional region, say, an $L \times M \times N$ box, would be connected by flips. This turns out not to be the case, as the spaces of domino brick tilings of even relatively small boxes are already not flip-connected. In this paper, we introduce an algebraic invariant for tilings of two-story regions. This invariant is a polynomial $P_t(q)$ associated with each tiling $t$ such that if two tilings $t_1,t_2$ are in the same flip connected component, then $P_{t_1}(q) \equiv P_{t_2}(q)$. The converse is not necessarily true; however, it is almost true, in a sense that we shall discuss later. 
 
We also introduce a new, genuinely three-dimensional move, which we call a \emph{trit}. A trit is a move that happens within a $2 \times 2 \times 2$ cube with two opposite ``holes'', and it has an orientation. More precisely, we remove three dimers, no two of them parallel, and place them back in the only other possible configuration (see Figure \ref{fig:postrit}). We discuss the effect of a trit in our invariant, and also prove that the space of tilings of a duplex region is connected by flips and trits.   

This paper is structured in the following manner: Section \ref{sec:defsAndNotations} introduces some basic definitions and notations that will be used throughout the paper. In Sections \ref{sec:twoIdenticalFloors} and \ref{sec:generalTwoStory}, we define the polynomial invariant $P_t(q)$ and prove its basic properties. In Section \ref{sec:effectOfTritsOnPt}, we explain how the trit affects the invariant $P_t(q)$ of a tiling. Section \ref{sec:examples} shows examples of regions and their flip connected components, in connection with the invariant. Sections \ref{sec:twoFloorsMoreSpace} and \ref{sec:embedFourFloors} discuss what happens when we embed tilings in a bigger region, in a sense that will be discussed there; in Section \ref{sec:connectedFlipsTrits}, we show that the space of domino tilings of a duplex region is connected by flips and trits. Finally, Section \ref{sec:conclusion} contains a summary of all the results in this paper. It is worth mentioning that, in \cite{segundoartigo}, we investigate tilings of more general regions; there, $P_t(q)$ is no longer defined. However, an important invariant is the \emph{twist} $\Tw(t)$, which, in the case of two-story regions, equals $P_t'(1)$.  

The authors are thankful for the generous support of CNPq, CAPES and FAPERJ. 


\section{Definitions and Notation}
\label{sec:defsAndNotations}

We will sometimes need to refer to numbers of the form $n + \half$, where $n$ is an integer. To avoid heavy notation, for an integer $n$ we will write $n^\sharp := n + \half$. This notation is inspired by music theory, when, say, the note $D\sharp$ is a half tone higher than $D$ in pitch.

By a \emph{cube} we mean a closed unit cube in $\RR^3$ whose vertices lie in $\ZZ^3$. Given $(x,y,z) \in \ZZ^3$, the cube $C\left(x^\sharp , y^\sharp, z^\sharp\right)$ will be $(x,y,z) + [0,1]^3$, i.e., the closed unit cube whose center is $\left(x^\sharp,  y^\sharp, z^\sharp\right)$; it is \emph{white} (resp. \emph{black}) if $x + y + z$ is even (resp. odd).

A \emph{region} is a simply a union of cubes. We may sometimes require the region to satisfy certain hypotheses (for instance, that it is contractible or simply connected), but this will be explicitly stated whenever it is required. A \emph{domino brick} or simply \emph{dimer} is the union of two cubes that share a face. We say two dimers $d_0$ and $d_1$ are \emph{disjoint}, and write $d_0 \cap d_1 = \emptyset$, if their interiors are disjoint (i.e., if they do not contain a common cube). A \emph{tiling} of a region is a covering of a region by disjoint dimers.

A \emph{flip} is a move that takes a tiling $t_0$ into another, by removing two parallel dimers that form a $2 \times 2 \times 1$ ``slab'' and placing them back in the only other position possible. Examples of flips, in the three possible positions, are shown in Figure \ref{fig:flipExample}.

\begin{figure}[ht]%
\centering
\includegraphics[width=0.8\columnwidth]{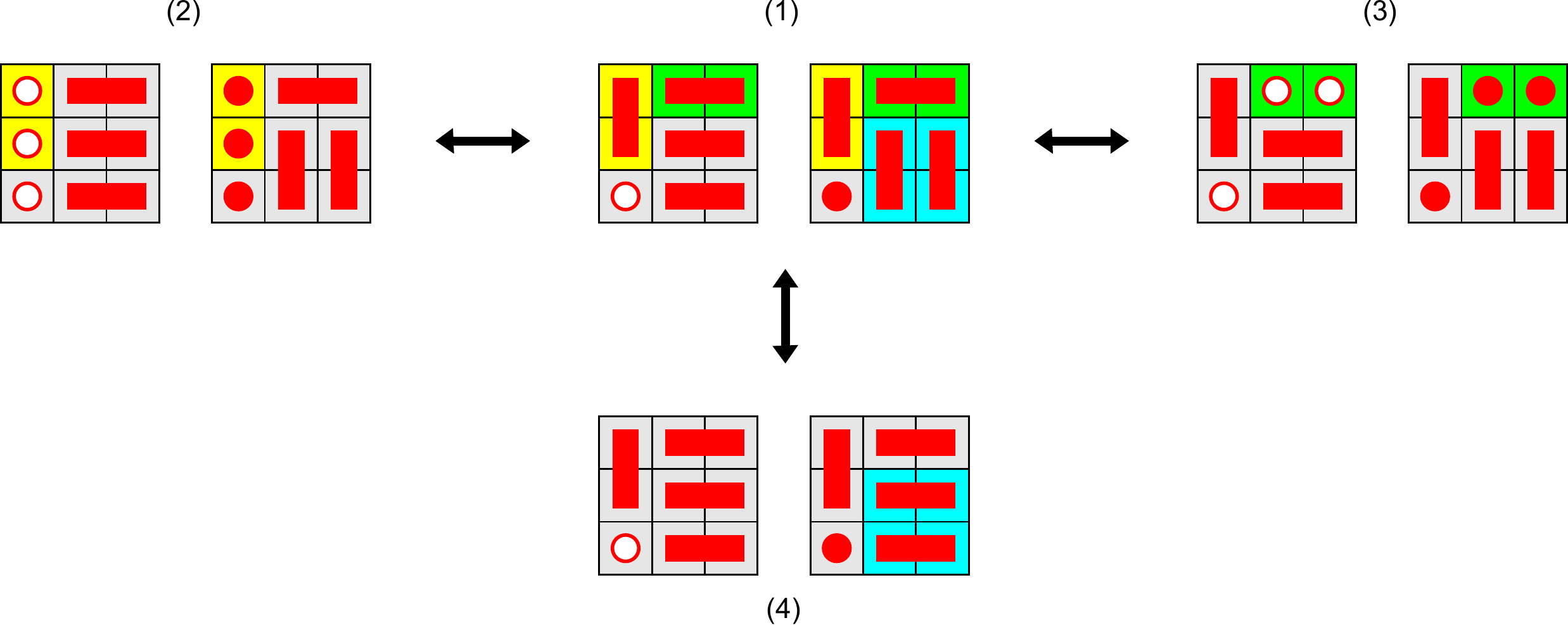}%
\caption{Examples of flips in three different positions. Starting from tiling (1), one can flip to either one of the tilings (2), (3) or (4). The four cubes that form the $2 \times 2 \times 1$ slab where the flip occurs are highlighted in each case.}%
\label{fig:flipExample}%
\end{figure}

We say that a region is \emph{flip connected} if the space of domino brick tilings of the region, thought of as a graph where two tilings are joined by an edge if they differ by a single flip, is connected. In other words, a region is flip connected if, given two tilings of that region, there exists a sequence of flips that takes one tiling to the other. The \emph{flip connected component} of a tiling $t$ is the set of all tilings that can be reached from $t$ after a sequence of flips; hence, a region $R$ is flip connected if and only all tilings of $R$ lie in the same flip connected component. 

Since plane domino tilings may be seen as a special case of domino brick tilings with only one floor, it follows, for instance, that $L \times M \times 1$ boxes are always flip connected. Additionally, it is very easy to show, by induction, that $L \times 2 \times 2$ boxes are flip connected, and we invite the reader to prove this as an exercise.

However, no other box is flip connected (see \cite{segundoartigo}). For instance, the $3 \times 3 \times 2$ box has 229 tilings, two of which have no flip positions. These are shown in Figures \ref{fig:tilings332_noflips1} and \ref{fig:tilings332_noflips2}. This box has, in fact, three flip connected components (two of which contain just one tiling). In the next section, we will look at an algebraic flip-invariant for two-story regions (like the box under consideration).

\begin{figure}[ht]
\centering
\subfloat[][]{\includegraphics[scale=0.5]{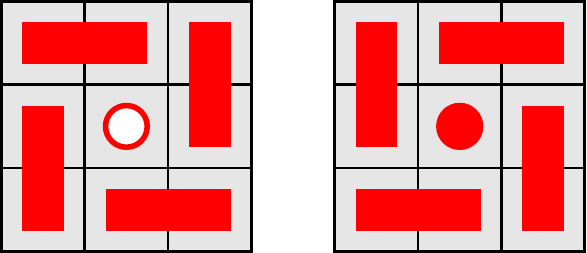}\label{fig:tilings332_noflips1}} \qquad \qquad
\subfloat[][]{\includegraphics[scale=0.5]{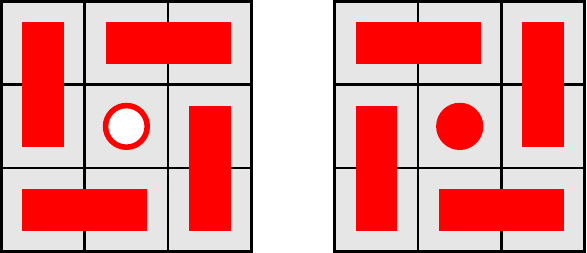}\label{fig:tilings332_noflips2}}

\caption{The two tilings of a $3 \times 3 \times 2$ box that have no flip positions.}

\end{figure}

The fact that even relatively small boxes are not flip connected invited us to consider a new move, which we called the \emph{trit}. Just as the flip takes place inside a $2 \times 2 \times 1$ rectangular cuboid, the trit takes place inside a $2 \times 2 \times 2$ cube where three dimers occur in the three possible positions ($x$, $y$ and $z$). We thus necessarily have some rotation of Figure \ref{fig:postrit}.

\begin{figure}[ht]
\centering
\includegraphics[width=0.6\columnwidth]{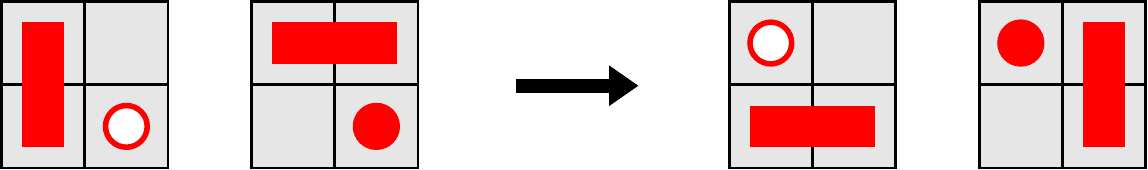}%
\caption{The anatomy of a positive trit (from left to right). The trit that takes the right drawing to the left one is a negative trit. The empty squares may represent either dimers that are not contained in the $2 \times 2 \times 2$ cube or cubes that are not contained in the region (for instance, if the region happens not to be a box).}%
\label{fig:postrit}%
\end{figure}

The trit that takes the drawing at the left of Figure \ref{fig:postrit} to the drawing at the right of it is a \emph{positive trit}. The reverse move is a \emph{negative trit}. Figure \ref{fig:negtrit_example} shows an example of a trit in a $3 \times 3 \times 2$ box.

\begin{figure}[ht]
\centering
\includegraphics[width=0.6\columnwidth]{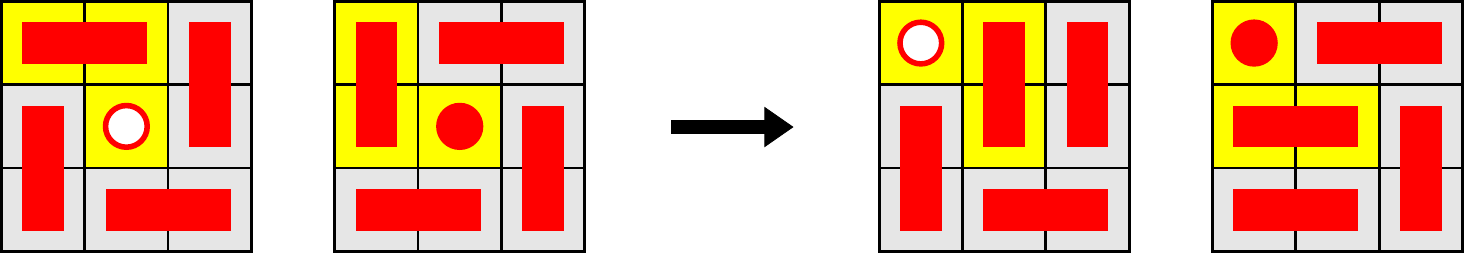}%
\caption{An example of a negative trit. The affected cubes are highlighted in yellow.}%
\label{fig:negtrit_example}%
\end{figure}

\section{Duplex regions}
\label{sec:twoIdenticalFloors}
Let $D \subset \RR^2$ be a quadriculated simply connected plane region, and let $R = D \times [0,2] \subset \RR^3$ be a duplex region, i.e., a two-story region with two identical floors. Our goal throughout this section is to associate to each tiling $t$ of $R$ a polynomial $P_t \in \ZZ[q,q^{-1}]$, which will be the same for tilings in the same flip connected component.

Consider the two floors of a tiling $t$ of $R$. Clearly, $z$ dimers (dimers that are parallel to the $z$ axis) occur in the same positions in both floors. Hence, if we project all the non-$z$ dimers of the bottom floor into the top floor, we will see two plane tilings of $D$ with ``stones'' (which occupy exactly one square) happening precisely in $z$ dimer positions, as shown in Figure \ref{fig:tiling742_invariant}. Since the $z$ dimer positions will play a key role in the associated drawing, we will call them \emph{jewels}. A white (resp. black) jewel is a jewel that happens in a white (resp. black) square.

\begin{figure}[ht]
\centering
\includegraphics[width=0.75\columnwidth]{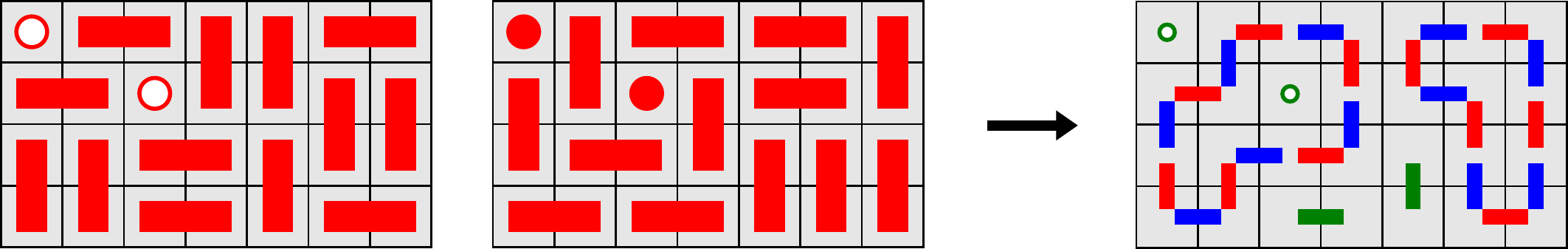}%
\caption{A tiling of a $7 \times 4 \times 2$ box and its associated drawing. The associated drawing has two jewels and four cycles, two of which are trivial ones. The jewels have opposite color; assuming that the topmost leftmost square is white, both cycles spin counterclockwise. }%
\label{fig:tiling742_invariant}%
\end{figure} 

Thus, the resulting drawing can be seen as a set of disjoint plane cycles (the dimers that were projected from the bottom floor are oriented from black to white) together with jewels. A cycle is called \textit{trivial} if it has length $2$. In order to construct $P_t$, consider a jewel $j$, and let $k_t(j)$ be the number of counter-clockwise cycles enclosing the jewel minus the number of clockwise cycles enclosing the jewel (in other words, $k_t(j)$ is the sum of the winding numbers of all the cycles with respect to $j$). Then 
$$P_t(q) = \sum_{j \text{ black}}q^{k_t(j)} - \sum_{j \text{ white}}q^{k_t(j)}.$$
For instance, for the tiling $t$ in Figure \ref{fig:tiling742_invariant}, if the topmost, leftmost square is white, $P_t(q) = q - 1$. Also, define the twist of a tiling to be $\Tw(t) = P_t'(1).$

We now show that $P_t$ is, in fact, flip invariant.

\begin{prop}
\label{prop:twoFloorFlipInvariant}
Let $R$ be a duplex region, and let $t_0$ be a domino brick tiling of $R$. Suppose $t_1$ is obtained from $t_0$ by performing a single flip on $t_0$. Then $P_{t_1} = P_{t_0}$. 
\end{prop}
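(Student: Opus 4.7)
The plan is to split into cases based on the orientation of the $2 \times 2 \times 1$ slab where the flip occurs: (i) the slab is horizontal (contained in a single floor), or (ii) the slab is vertical (spanning both floors).

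I will encode the associated drawing of a tiling $t$ as an oriented $1$-chain $C_t$ on the edges of $D$: each non-$z$ top-floor dimer contributes its underlying edge oriented white-to-black, and each non-$z$ bottom-floor dimer contributes its underlying edge oriented black-to-white. Since every non-jewel cell is covered by exactly one top and one bottom dimer, $C_t$ is closed, i.e.\ a $1$-cycle. Trivial length-$2$ cycles of the drawing traverse the same edge in opposite orientations, so they contribute $0$ to $C_t$ and also have winding number $0$ about every point; extending $\mathrm{wind}(\cdot, j)$ linearly to $1$-chains thus yields $k_t(j) = \mathrm{wind}(C_t, j)$. A standard fact I will use repeatedly: for two adjacent cells $j, j'$, the difference $\mathrm{wind}(C_t, j) - \mathrm{wind}(C_t, j')$ equals the signed multiplicity of the edge $\{j, j'\}$ in $C_t$.

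For case (i), the flip alters only top-floor (or only bottom-floor) dimers within a $2 \times 2$ footprint $S$; no jewel can sit in $S$, since each of its four cells is occupied by a non-$z$ dimer in the affected floor (and jewels occupy the same position in both floors). A direct computation, using the sign conventions of the previous paragraph, shows that $C_{t_1} - C_{t_0}$ is supported on the four edges of $S$ and is precisely the $4$-cycle bounding $S$ (with one of its two orientations); its winding number about any point outside $S$ vanishes. Since all jewels lie outside $S$, this gives $k_{t_1}(j) = k_{t_0}(j)$ for every jewel $j$, hence $P_{t_1} = P_{t_0}$.

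For case (ii), without loss of generality $t_0$ has two adjacent jewels at cells $j_1, j_2$ of $D$, and $t_1$ replaces them by a top dimer and a bottom dimer, both on the edge $\{j_1, j_2\}$. These two new dimers contribute opposite orientations of $\{j_1, j_2\}$ to $C_{t_1}$ and cancel, so $C_{t_0} = C_{t_1}$ as $1$-chains; therefore $k_{t_0}(j) = k_{t_1}(j)$ for every jewel $j \notin \{j_1, j_2\}$. It remains to show that the two extra jewels in $t_0$ contribute $0$ to $P_{t_0}$. They are adjacent with opposite colors, so their net contribution is $\pm\bigl(q^{k_{t_0}(j_1)} - q^{k_{t_0}(j_2)}\bigr)$; and the edge $\{j_1, j_2\}$ carries no non-$z$ dimer in $t_0$ (both endpoints being $z$-dimer positions), so its multiplicity in $C_{t_0}$ is zero, which by the adjacency fact forces $k_{t_0}(j_1) = k_{t_0}(j_2)$. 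Hence the contributions cancel and $P_{t_0} = P_{t_1}$.

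The main delicate step will be nailing down the sign conventions: verifying that $C_{t_1} - C_{t_0}$ in case (i) is a single coherent $4$-cycle (rather than, for instance, two $2$-cycles that fortuitously have zero net winding), and that in case (ii) the two new dimers really contribute opposite-orientation copies of the same edge so that the $1$-chain is unchanged. Once these local identifications are in place, the winding-number computation is immediate in both cases.
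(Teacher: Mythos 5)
Your proof is correct, and while it uses the same case decomposition as the paper (flip inside one floor versus flip exchanging two $z$-dimers with a top--bottom pair on one edge), the way you handle the main case is genuinely different. Your case (ii) is essentially the paper's Case 1 in chain language: the two adjacent jewels have opposite colors and, since no part of the drawing meets the segment between their centers, they are enclosed by exactly the same cycles, so their contributions cancel. In case (i), however, the paper tracks how the flip reconnects the cycles of the associated drawing, with two subcases (merging two disjoint like-oriented cycles into one of the same orientation, or merging nested oppositely-oriented cycles into one with the orientation of the outer), plus a separate remark that trivial cycles do not change the analysis; you instead pass to the oriented $1$-chain $C_t$ and compute that the flip changes it by the coherently oriented boundary of the $2\times 2$ footprint $S$, whose winding number vanishes at every cell center outside $S$, hence at every jewel. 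This buys uniformity: you never need to decide whether the flip merges or splits cycles, whether they are nested, or whether one of them is trivial, since trivial cycles already cancel in $C_t$ and the identity $k_t(j)=\mathrm{wind}(C_t,j)$ absorbs all degenerate configurations; the paper's version is more pictorial and elementary but its subcase list is exactly what your local chain computation replaces. One cosmetic caveat: your ``standard fact'' for adjacent cells is stated too generally --- when the edge $\{j,j'\}$ has nonzero multiplicity the chain passes through the two cell centers and the winding numbers there are not defined --- but the only instance you invoke (both cells are jewels, the edge has multiplicity zero and no edge of the chain meets either center, so the two centers lie in the same complementary component) is sound, and is precisely the paper's ``enclosed by the same cycles'' observation.
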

\begin{proof}
Let us consider the tiling $t_0$ and its associated drawing as a plane tiling with jewels; we want to see how this drawing is altered by a single flip. We will split the proof into cases, and the reader may find it easier to follow by looking at Figure \ref{fig:flipCasesTwoFloors}.

\begin{figure}[ht]
\centering
\subfloat[Case \ref{case:flipzdimer}: $P_t(q) = 1 - q^{-2}$ in both tilings]{\includegraphics[scale=0.4]{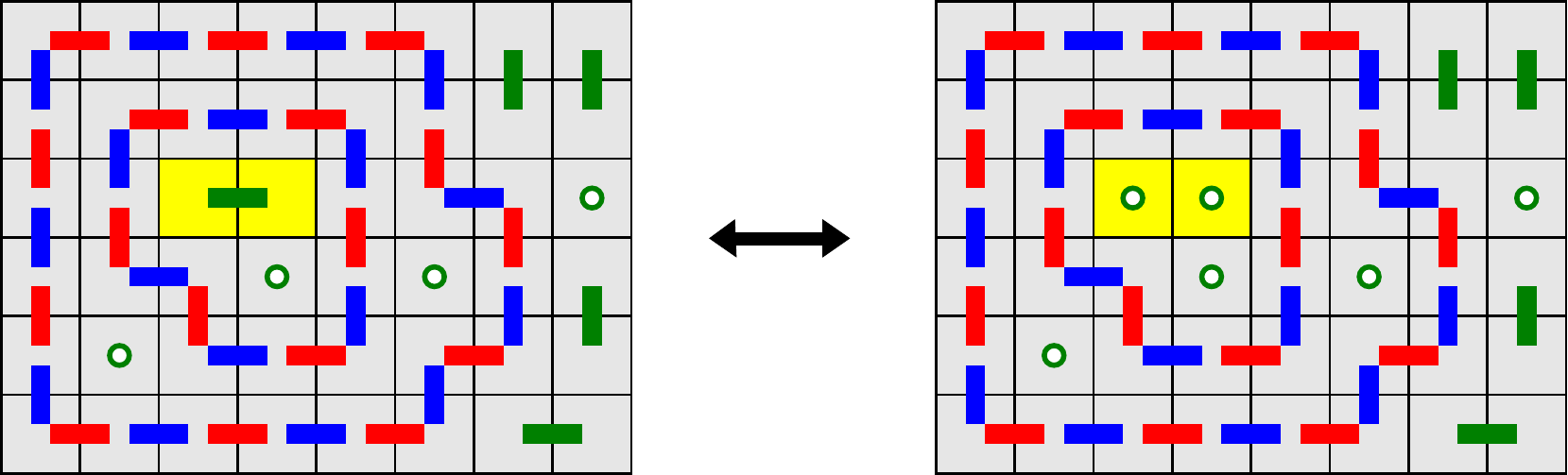}\label{fig:flipcase1}} \\
\subfloat[Case \ref{enum:sameorient}: $P_t(q) = q - 1$ in both tilings.]{\includegraphics[scale=0.4]{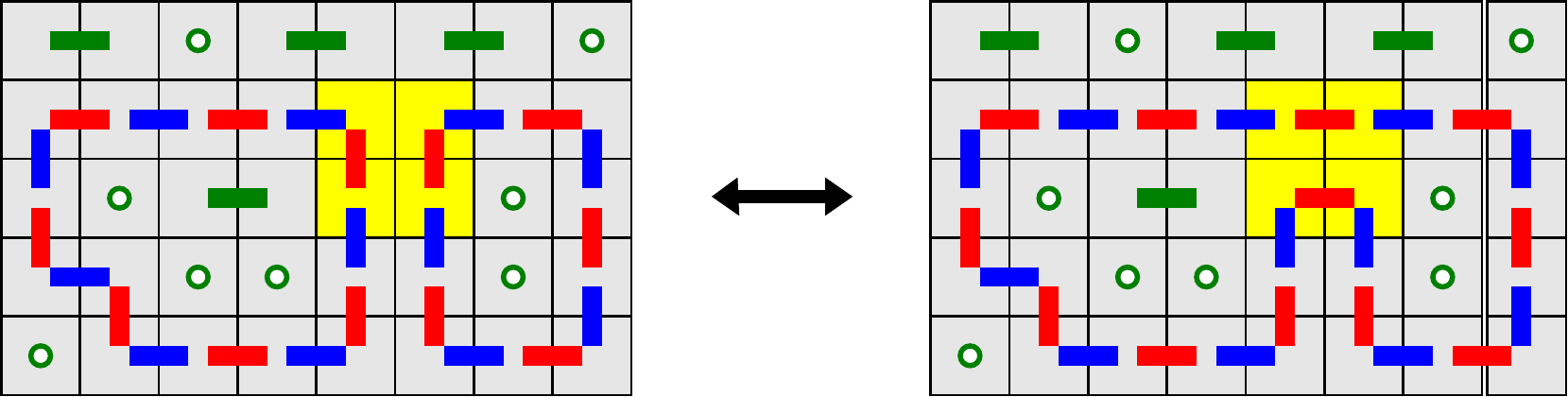}\label{fig:flipcase2_1}} \\
\subfloat[Case \ref{enum:differentorient}: $P_t(q) = q - 1$ in both tilings.]{\includegraphics[scale=0.4]{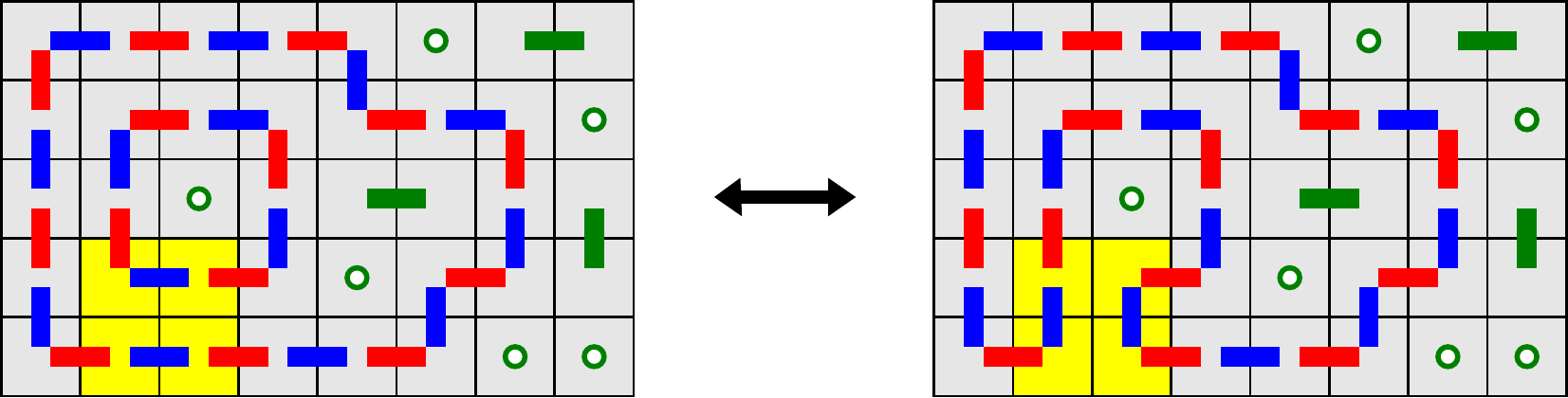}\label{fig:flipcase2_2}}
\caption{Examples illustrating the effects of flips in each of the cases. The flip positions are highlighted in yellow.}
\label{fig:flipCasesTwoFloors}
\end{figure}

\begin{case} 
\label{case:flipzdimer}
A flip that takes two non-$z$ dimers that are in the same position in both floors to two adjacent $z$ dimers (or the reverse of this flip)
\end{case}
The non-$z$ dimers have no contribution to $P_{t_0}$. On the other hand, the two $z$ dimers that appear after the flip have opposite colors (since they are adjacent), and are enclosed by exactly the same cycles. Hence, their contributions to $P_{t_1}$ cancel out, and thus $P_{t_0} = P_{t_1}$ in this case.

\begin{case}
\label{case:fliponefloor}
A flip that is completely contained in one of the floors.
\end{case}
If we look at the effect of such a flip in the associated drawing, two things can happen:
\begin{enumerate}[label=2\alph*., ref=2\alph*]
	\item\label{enum:sameorient} It connects two cycles that are not enclosed in one another and have the same orientation, and creates one larger cycle with the same orientation as the original ones, or it is the reverse of such a move;
	\item\label{enum:differentorient} It connects two cycles of opposite orientation such that one cycle is enclosed by the other (or it is the reverse of such a flip). The new cycle has the same orientation as the outer cycle.
\end{enumerate} 

In case \ref{enum:sameorient}, a jewel is enclosed by the new larger cycle if and only if it is enclosed by (exactly) one of the two original cycles. Hence, its contribution is the same in both $P_{t_0}$ and $P_{t_1}$.

In case \ref{enum:differentorient}, a jewel is enclosed by the new cycle if and only if it is enclosed by the outer cycle and not enclosed by the inner one. If it is enclosed by the new cycle, its contribution is the same in $P_{t_0}$ and $P_{t_1}$, because the new cycle has the same orientation as the outer one. On the other hand, if a jewel $j$ is enclosed by both cycles, their contributions to $k_t(j)$ cancel out, hence the jewel's contribution is also the same in $P_{t_0}$ and $P_{t_1}$. 

Hence, $P_{t_0} = P_{t_1}$ whenever $t_0$ and $t_1$ differ by a single flip.
\end{proof}
Notice that in cases \ref{enum:sameorient} and \ref{enum:differentorient} in the proof, one or both of the cycles involved may be trivial cycles. However, this does not change the analysis.

\section{General two-story regions}
\label{sec:generalTwoStory}

Figure \ref{fig:unequalFloors} shows an example of a region with two unequal simply connected floors. If we try to look at the associated drawing, in the same sense as in Section \ref{sec:twoIdenticalFloors}, we will end up with a set of disjoint (simple) paths. Some of them are cycles, while others have loose ends: an example is showed in the left of Figure \ref{fig:unequalFloors} (in that case, all paths have loose ends).

\begin{figure}[ht]%
\centering
\includegraphics[width=0.5\columnwidth]{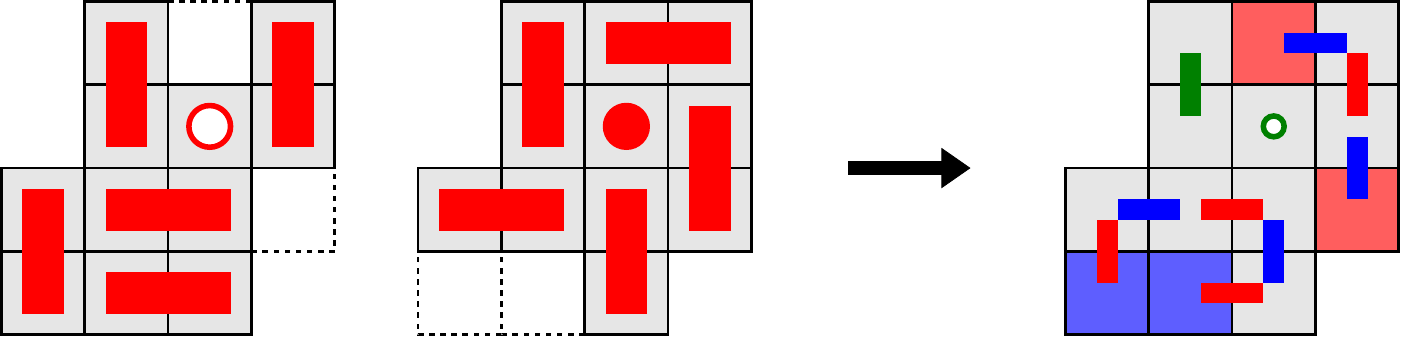}%
\caption{An example of tiled region with two simply connected but unequal floors.}%
\label{fig:unequalFloors}%
\end{figure}

In Figure \ref{fig:unequalFloors}, there are four highlighted squares, which represent cubes in the symmetric difference of the floors. These highlighted squares are called \emph{holes}. The holes highlighted in blue in Figure \ref{fig:unequalFloors} stem from cubes $C(x^\sharp, y^\sharp, 0^\sharp)$ in the top floor such that $C(x^\sharp, y^\sharp, 1^\sharp) \notin R$, whereas the ones highlighted in red stem from cubes $C(x^\sharp, y^\sharp, 1^\sharp)$ in the bottom floor such that $C(x^\sharp, y^\sharp, 0^\sharp) \notin R$. Notice also that every white (resp. black) hole (regardless of which floor it is in) creates a loose end where a dimer is oriented in such a way that it is entering (resp. leaving) the square, which we call a \emph{source} (resp \emph{sink}): the names source and sink do not refer to the dimers, but instead refer to the ghost curves, which are defined below.  Also, sources and sinks do not depend on the specific tiling and, since the number of black holes must equal the number of white holes, the number of sources always equals the number of sinks in a tileable region.

If we connect pairs of loose ends in such a way that a source is always connected to a sink by a curve oriented from the source to the sink, it follows that we now actually see a set of (not necessarily disjoint) cycles, as shown in Figure \ref{fig:unequalFloorsJoined}. Notice that, since the floors are simply connected, we can always connect any source to any sink in the associated drawing via a path that never touches a closed square that is common to both floors, i.e., which may only cross holes (notice that a hole may never contain a jewel). Hence, a \textit{ghost curve} is defined as a curve that connects a source to a sink and which never touches the closure of a square that is common to both floors. 

Hence, for a tiling of a two-story region with unequal floors, the associated drawing is the ``usual'' associated drawing (from Section \ref{sec:twoIdenticalFloors}) together with a set of ghost curves such that each source and each sink is in exactly one ghost curve.   

\begin{figure}[ht]%
\centering
\includegraphics[width=0.7\columnwidth]{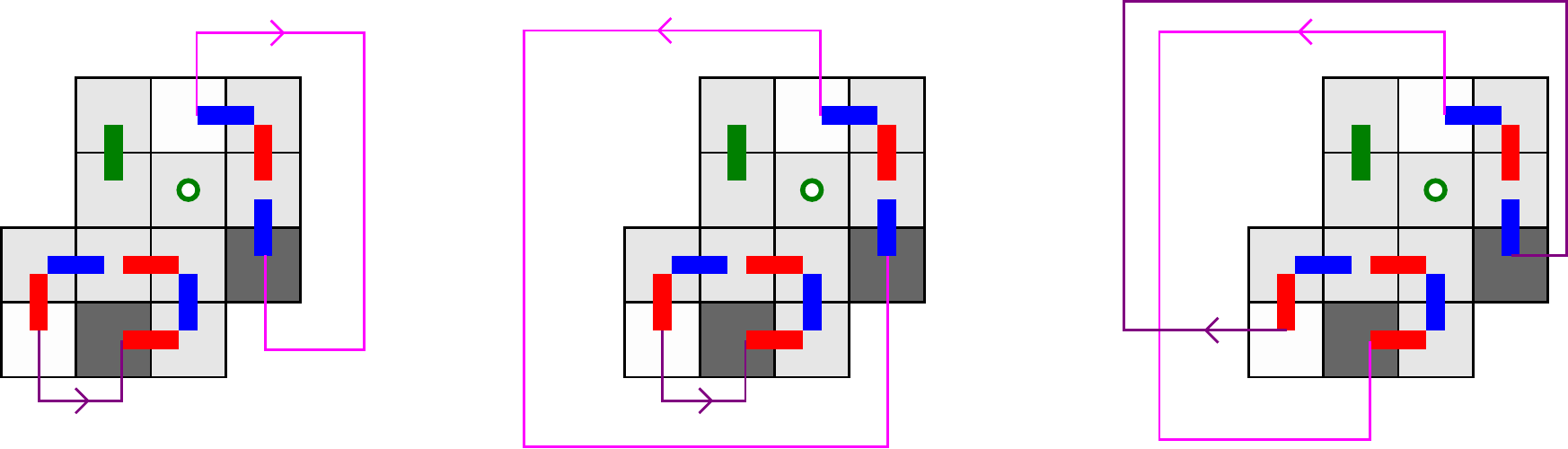}%
\caption{Three different ways to join sources and sinks. The sources are highlighted in very light grey (almost white), while the sinks are highlighted in dark grey. The invariants $P_t(q)$ for each case, from left to right, are $1$, $q$ and $1$.}%
\label{fig:unequalFloorsJoined}%
\end{figure}

Therefore, since for each tiling $t$ the associated drawing is a set of cycles and jewels, we may define $P_t(q)$ in essentially the same way as we did for duplex regions: if for a jewel $j$ we let $k_t(j)$ be the sum of the winding numbers of the cycles with respect to that jewel, we set $P_t(q) = \sum_{j \text{ black}}q^{k_t(j)} - \sum_{j \text{ white}}q^{k_t(j)}.$ The only difference with respect to Section \ref{sec:twoIdenticalFloors} is that the winding number of a cycle with respect to a jewel is no longer necessarily $1$ or $-1$, but can be any integer (see Figure \ref{fig:unequalTwoFloor2}).

\begin{prop} \label{prop:twoStoryFlipInvariant} Let $R$ be a two-story region, and suppose $t_1$ is obtained from a tiling $t_0$ of $R$ after a single flip. Then $P_{t_0} = P_{t_1}$.
\end{prop}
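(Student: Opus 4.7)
The plan is to generalize the case analysis of Proposition~\ref{prop:twoFloorFlipInvariant} by recasting the invariant in a form that is insensitive to the global complexity of the associated drawing. First, I would fix a ghost-curve system for $R$ and use it for both $t_0$ and $t_1$; since ghost curves depend only on $R$ (through its sources, sinks, and common squares), a single flip alters only the dimer-arrow part of the associated drawing. Next, I would introduce the total $1$-chain $T_t$, the formal sum of all dimer arrows and all ghost-curve segments with their prescribed orientations. Because every cell has equal in- and out-degree in the drawing, $T_t$ is closed, and by additivity of winding number, $k_t(j)$ equals the winding number of $T_t$ about $j$ for every jewel $j$. Flip-invariance of $P_t$ is thereby reduced to comparing $T_{t_0}$ with $T_{t_1}$ and tracking what happens to the set of jewels.

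In Case 1 (the flip slab is vertical, trading two parallel non-$z$ dimers for two $z$-dimers or the reverse), the two non-$z$ dimers project to oppositely oriented arrows on the same edge, so their contributions to $T_t$ cancel: $T_{t_0} = T_{t_1}$, and the winding number of $T_t$ about every point is unchanged. The jewel set gains (or loses) two adjacent cells $X, Y$ of opposite colors, and I would establish $k_t(X) = k_t(Y)$ by noting that no component of $T_t$ crosses the edge shared by $X$ and $Y$: the only candidate dimer arrow was removed by the flip, and ghost curves are forbidden from touching the closures of common squares, both of which $X$ and $Y$ are. The two new jewel contributions therefore cancel in $P_{t_1}$.

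In Case 2 (the flip slab lies entirely in one floor), label the four cells of the $2 \times 2$ slab as $A, B, C, D$ with $A, D$ white and $B, C$ black, and suppose the flip exchanges top-floor arrows $A \to B,\ D \to C$ for $A \to C,\ D \to B$. Nothing else in $T_t$ changes, and
\[
T_{t_1} - T_{t_0} = (A \to C) + (D \to B) - (A \to B) - (D \to C)
\]
is the simple $4$-cycle tracing the boundary of the $2 \times 2$ region; its winding number about any point outside this region vanishes. Since the slab is tiled by non-$z$ dimers in its floor, no jewel can lie inside it, so $k_{t_0}(j) = k_{t_1}(j)$ for every jewel $j$ and $P_{t_0} = P_{t_1}$. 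The main obstacle, relative to the duplex setting, is that a cycle in the associated drawing can have arbitrary winding number about a jewel because a ghost curve is free to spiral, so the enclosure dichotomy used in the proof of Proposition~\ref{prop:twoFloorFlipInvariant} no longer applies directly. Recasting the invariant via the single $1$-chain $T_t$ and using additivity of winding number sidesteps this difficulty, turning flip invariance into a short, local computation inside the $2 \times 2 \times 1$ flip slab.
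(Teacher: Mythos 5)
Your proof is correct, and it follows the same overall skeleton as the paper's: for the flip trading two stacked non-$z$ dimers for two $z$-dimers you use exactly the paper's key point, namely that the two new jewels occupy squares common to both floors, which neither a ghost curve nor any remaining dimer arrow can separate, so their opposite-colored contributions cancel. Where you genuinely diverge is the in-floor flip: the paper simply asserts that Case \ref{case:fliponefloor} of Proposition \ref{prop:twoFloorFlipInvariant} carries over verbatim, i.e.\ it reuses the enclosure dichotomy \ref{enum:sameorient}/\ref{enum:differentorient} stated for disjoint simple cycles, whereas you recast $k_t(j)$ as the winding number of the single closed $1$-chain $T_t$ and observe that the flip changes $T_t$ by the boundary $4$-cycle of the $2\times 2$ block, which winds zero times around every jewel (no jewel can sit in the four flipped cells, and every other cell center lies outside that unit square). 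This buys something real: in the general two-story setting the cycles contain ghost arcs, need not be simple or disjoint, and can wind arbitrarily often around a jewel, so the duplex enclosure analysis does not literally transfer; your chain computation handles merging, splitting, and self-overlapping configurations uniformly, at the modest cost of checking that $T_t$ is closed and that winding numbers are additive. One cosmetic point in your Case 1: rather than saying nothing ``crosses the edge shared by $X$ and $Y$,'' it is cleaner to note that no arrow or ghost arc meets the interior of $X\cup Y$ (an arrow passes only through the interiors of its own two cells, and in $t_1$ no arrow has an endpoint at the center of $X$ or $Y$, while ghost curves avoid their closures), so the two centers lie in the same complementary component of $T_{t_1}$ and hence have equal winding number --- which is precisely the paper's assertion, made explicit.
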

\begin{proof}
The proof is basically the same as that of Proposition \ref{prop:twoFloorFlipInvariant}. In fact, for Case \ref{case:fliponefloor} (flips contained in one floor) in that proof, literally nothing changes, whereas Case \ref{case:flipzdimer} (flips involving jewels) can only happen in a pair of squares that are common to both floors. Since the ghost curves must never touch such squares, it follows that the pair of adjacent jewels that form a flip position have the same $k_t(j)$, hence their contributions cancel out.
\end{proof}

\begin{figure}[ht]%
\centering
\includegraphics[width=0.9\columnwidth]{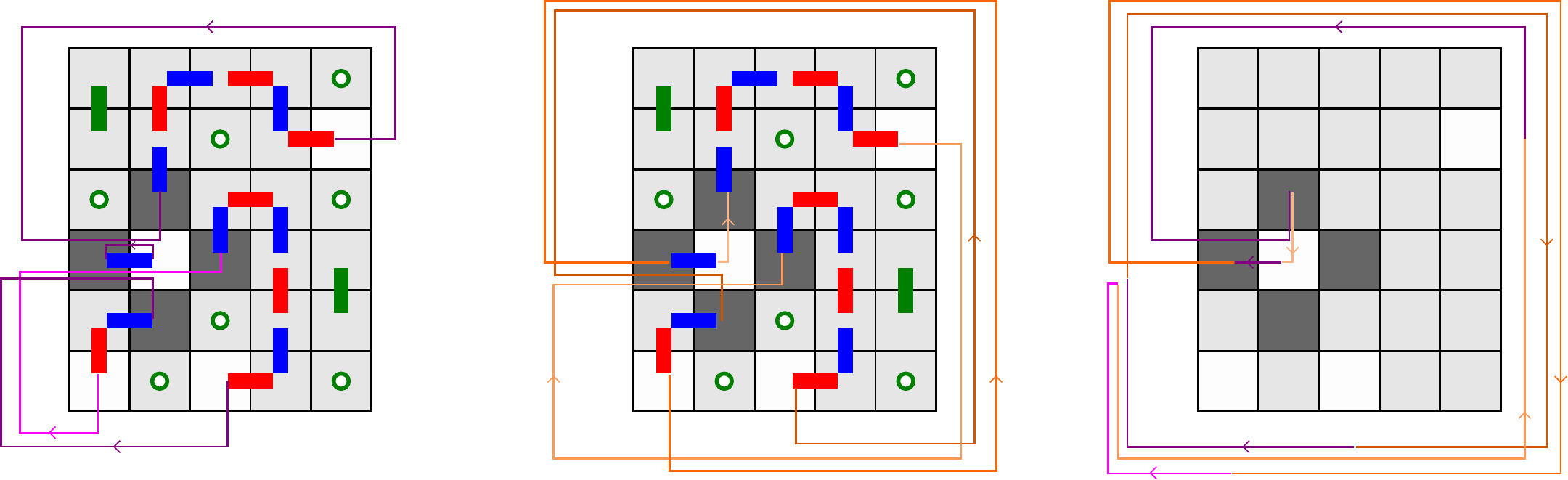}%
\caption{Two different ways to connect sources and sinks. The last diagram illustrates that the difference (first minus second) between these two connections forms a set of cycles, each of which has the same winding number with respect to each square where it is possible to have a jewel. In this case, this set has two cycles: one with winding number $0$ and the other with winding number $-1$ (with respect to every square that is common to both floors). Notice that the invariant is $-2q + 1 - 2q^{-1}$ in the first diagram and $-2q^2 + q - 2$ in the second, that is, the first invariant is indeed $q^{-1}$ times the second.}%
\label{fig:unequalTwoFloor2}%
\end{figure}

A very natural question at this point is the following: how does the choice of ``connections'' affect $P_t$? It turns out that this question has a very simple answer: if $P_{t,1}$ is the invariant associated with one choice of connection and $P_{t,2}$ is associated with another, then there exists $k \in \ZZ$ such that for every tiling $t$, $P_{t,1}(q) = q^k P_{t,2}(q)$. 

To see this, fix a tiling $t$. We want to look at the contributions of a jewel to $P_t$ for two given choices of source-sink connections. Since the exponent of the contribution of a jewel is the sum of the winding numbers of all the cycles with respect to it, it follows that the difference of exponents between two choices of connections is the sum of winding numbers of the cycles formed by putting the ghost curves from both source-sink connections together in the same picture, as illustrated in Figure \ref{fig:unequalTwoFloor2}. Now this sum of winding numbers is the same for every jewel, because the set of ghost curves must enclose every jewel in the same way. Hence, the effect of changing the connection is multiplying the contribution of each jewel by the same integer power of $q$.

Therefore, the invariant for the general case with two simply connected floors is uniquely defined up to multiplication by an integer power of $q$, $q^k$. As a consequence, the twist $\Tw(t) = P_t'(1)$ is defined up to the additive constant $k$.

\section{The effect of trits on $P_t$}     
\label{sec:effectOfTritsOnPt}

\begin{figure}[ht]
\centering
\includegraphics[width=0.4\columnwidth]{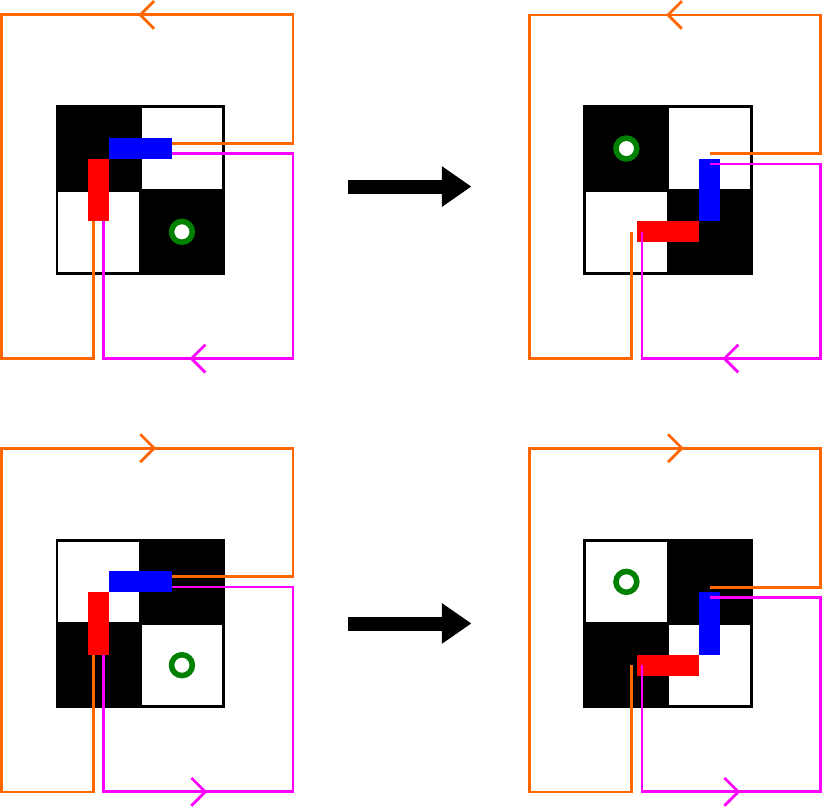}%
\caption{Schematic drawing of the effect of positive trits, with the magenta and orange lines indicating (schematically) the two possible relative positions of the cycle $\gamma$ altered by the trit (the magenta and orange segments represent cycles that may have ghostly parts or not). It is clear that, in either case, the contribution of the portrayed jewel changes from $q^k$ to $q^{k+1}$ if it is black, and from $-q^k$ to $-q^{k-1}$ if it is white.}%
\label{fig:postrit_contrib}%
\end{figure}

We now turn our attention to the effect of a trit on $P_t$. By looking at Figure \ref{fig:postrit_contrib}, we readily observe that a trit affects only the contribution of the jewel $j$ that takes part in the trit (a trit always changes the position of precisely one jewel, since it always involves exactly one $z$ dimer). Furthermore, it either pulls a jewel out of a cycle, or pushes it into a cycle, but the jewel maintains its color. Hence, if $\pm q^k$ is the contribution of $j$ to $P_t$, then after a trit involving $j$ its contribution becomes $\pm q^{k + 1}$ or $\pm q^{k - 1}$.

A more careful analysis, however, as portrayed in Figure \ref{fig:postrit_contrib}, shows that a positive trit involving a black jewel always changes its contribution from $q^k$ to $q^{k+1}$, and a positive trit involving a white jewel changes $-q^k$ to $-q^{k-1}$. Hence, we have proven the following:

\begin{prop} \label{prop:posTrit} Let $t_0$ and $t_1$ be two tilings of a region $R$ which has two simply connected floors, and suppose $t_1$ is reached from $t_0$ after a single positive trit. Then, for some $k \in \ZZ$, 
\begin{equation}
P_{t_1}(q) - P_{t_0}(q) =  q^k(q - 1).
\label{eq:effectOnPt}
\end{equation}
\end{prop}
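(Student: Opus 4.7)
The plan is to unpack the schematic already contained in the discussion preceding the statement, and check that in every orientation of the trit the sign/exponent changes exactly as claimed. The proof is essentially a local bookkeeping argument: we focus on the unique jewel moved by the trit and describe the local change in the associated planar drawing.

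First I would observe that a trit, by definition, involves exactly one $z$-dimer, so it moves exactly one jewel $j$ from some cell to an adjacent cell of the same color (a $2\times 2\times 2$ trit cube has $x$-, $y$- and $z$-dimer, and the endpoints of the $z$-dimer before and after the trit differ by a translation in the $x$ or $y$ direction, thus have the same parity). Hence only the term $\pm q^{k_t(j)}$ in $P_t(q)$ corresponding to this jewel can change, and its sign is the same before and after. So $P_{t_1}-P_{t_0}$ is of the form $\pm(q^{k'}-q^k)$.

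Next I would describe the local effect of the trit on the rest of the associated drawing. Outside the $2\times 2\times 2$ trit cube, every dimer of $t_0$ and $t_1$ is identical, so every cycle of the associated drawing that does not intersect the cube is unaffected. Inside the cube, only the two non-$z$ dimers change; these two dimers belong either to a single cycle $\gamma$ of the associated drawing, or to two paths/cycles, and the trit has the effect of rerouting a short piece of that cycle/path so that the jewel $j$ either moves from just inside $\gamma$ to just outside, or vice versa, as depicted in Figure \ref{fig:postrit_contrib}. Concretely, the new jewel position lies on the opposite side of one segment of $\gamma$ compared to the old position, while all other cycles enclose the old and new jewel positions the same number of times. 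Thus $k_{t_1}(j)-k_{t_0}(j)=\pm 1$.

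Finally I would pin down the sign of this $\pm 1$ using the positivity convention. A positive trit is fixed up to rotation by Figure \ref{fig:postrit}; checking one representative configuration (and noting that all rotations give the same orientation of the rerouted cycle segment relative to the color of the involved cube) shows that for a black jewel $k$ increases by $1$, while for a white jewel $k$ decreases by $1$. This yields
\[
P_{t_1}(q)-P_{t_0}(q) \;=\; q^{k_{t_0}(j)}(q-1) \quad\text{if $j$ is black},
\]
\[
P_{t_1}(q)-P_{t_0}(q) \;=\; -\bigl(q^{k_{t_0}(j)-1}-q^{k_{t_0}(j)}\bigr) \;=\; q^{k_{t_0}(j)-1}(q-1) \quad\text{if $j$ is white}.
\]
In either case the difference has the form $q^k(q-1)$, as claimed.

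The main obstacle is the last step, verifying the sign is consistent across all rotations of the positive trit: this is a finite but slightly tedious case analysis. A clean way to handle it is to use the orientation convention on cycles (from black to white) together with the right-hand-rule-like orientation built into the notion of a positive trit, so that one case suffices and the rest follow from symmetry; I would make sure to state this reduction explicitly rather than enumerate all rotations.
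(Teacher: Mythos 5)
Your route is essentially the paper's: a trit moves exactly one jewel, the jewel keeps its color, the winding number $k_t(j)$ changes by exactly $\pm 1$, and the sign of that change is pinned down by inspecting the defining configuration of a positive trit (Figure \ref{fig:postrit}), which is precisely the local analysis the paper carries out via Figure \ref{fig:postrit_contrib}.

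One step, however, is wrong as written. You justify color preservation by saying the $z$ dimer before and after the trit ``differ by a translation in the $x$ or $y$ direction, thus have the same parity.'' A unit translation in the $x$ or $y$ direction \emph{changes} the parity of $x+y+z$, so your premise, if true, would give the opposite conclusion; and the premise itself is false. Inside the $2\times 2\times 2$ cube the $z$ dimer jumps to the \emph{diagonally} opposite column of the $2\times 2$ horizontal square (displacement $(\pm 1,\pm 1,0)$), and it is this diagonal move that preserves the color. Relatedly, your hedge that the two non-$z$ dimers may belong to ``two paths/cycles'' can be dropped: one of them lies in the top floor and the other in the bottom floor, and in the associated drawing they are consecutive edges meeting at the cell diagonally opposite the jewel. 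The trit simply reroutes this corner of a single cycle (possibly with ghost parts) past the jewel; since the segment joining the old and new jewel positions lies in the closure of two squares common to both floors, no other cycle or ghost curve can cross it, which is what makes your claim that only $\gamma$ contributes and that $k_t(j)$ changes by $\pm 1$ precise. With these corrections your argument coincides with the paper's proof.
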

A closer look at equation \ref{eq:effectOnPt} shows that $\Tw(t_1) - \Tw(t_0) = P_{t_1}'(1) - P_{t_0}'(1) = 1$ whenever $t_1$ is reached from $t_0$ after a single positive trit. This gives the following easy corollary: 

\begin{coro} \label{coro:tritstwofloors} Let $t_0$ and $t_1$ be two tilings of a region $R$ with two simply connected floors, and suppose we can reach $t_1$ from $t_0$ after a sequence $S$ of flips and trits. Then $$\#(\text{positive trits in } S) - \#(\text{negative trits in } S) = \Tw(t_1) - \Tw(t_0).$$  
\end{coro}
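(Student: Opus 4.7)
The plan is to telescope the change in $\Tw$ across the sequence $S$, exploiting the fact that we already know exactly how each type of elementary move affects $P_t$, and hence $\Tw(t) = P_t'(1)$. By Proposition \ref{prop:twoStoryFlipInvariant}, a flip preserves $P_t$ entirely, and in particular preserves $\Tw$; so flips contribute $0$ to the total change and can be ignored in the count.

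For a positive trit, Proposition \ref{prop:posTrit} supplies $P_{t_1}(q) - P_{t_0}(q) = q^k(q-1)$ for some $k \in \ZZ$. Differentiating and evaluating at $q = 1$ gives
$$\Tw(t_1) - \Tw(t_0) = \bigl(k q^{k-1}(q-1) + q^k\bigr)\big|_{q=1} = 1,$$
which is precisely the observation recorded right after Proposition \ref{prop:posTrit}. Since a negative trit is by definition the reverse of a positive trit, it changes $P_t$ by $-q^k(q-1)$ and therefore decreases $\Tw$ by exactly $1$.

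With these three facts in hand, I would write $S = (s_1, \ldots, s_n)$, let $u_0 = t_0, u_1, \ldots, u_n = t_1$ be the intermediate tilings, and sum the per-step differences:
$$\Tw(t_1) - \Tw(t_0) = \sum_{i=1}^{n} \bigl(\Tw(u_i) - \Tw(u_{i-1})\bigr) = \#(\text{positive trits in } S) - \#(\text{negative trits in } S),$$
because each flip contributes $0$, each positive trit contributes $+1$, and each negative trit contributes $-1$.

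The only subtlety — and I do not regard it as a genuine obstacle — is the well-definedness issue flagged at the end of Section \ref{sec:generalTwoStory}: for a general two-story region, $P_t$ is defined only up to an overall factor $q^k$ arising from the choice of source-sink connections, so $\Tw(t)$ is defined only up to an additive constant. However, that constant depends on the region $R$ and the chosen connections, not on the individual tiling, so it cancels in the difference $\Tw(t_1) - \Tw(t_0)$. The corollary is thus essentially a bookkeeping consequence of Propositions \ref{prop:twoStoryFlipInvariant} and \ref{prop:posTrit}, and the writeup should be correspondingly brief.
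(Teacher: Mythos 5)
Your proposal is correct and follows essentially the same route as the paper: the paper derives, immediately after Proposition \ref{prop:posTrit}, that a positive trit changes $\Tw$ by exactly $+1$ (hence a negative trit by $-1$), combines this with the flip invariance of $P_t$ from Proposition \ref{prop:twoStoryFlipInvariant}, and obtains the corollary by precisely the telescoping bookkeeping you describe. Your remark that the additive ambiguity in $\Tw$ coming from the choice of ghost curves cancels in the difference is consistent with the discussion at the end of Section \ref{sec:generalTwoStory} and is a fine touch, not a deviation.
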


\section{Examples}
\label{sec:examples}
For the examples below, we wrote programs in the C$^\sharp$ language.
\begin{example}[The $7 \times 3 \times 2$ box] \label{example:732box} The $7 \times 3 \times 2$ box has a total of $880163$ tilings, and thirteen flip connected components. Table \ref{table:CC732} contains information about the invariants of these connected components. 

\begin{table}[ht]
\begin{tabular}{|c|c|c|c|c|}
\hline
 \parbox[c]{2cm}{\centering Connected \\Component} & \parbox[c]{2cm}{\centering Number of \\tilings} & Tiling &$P_t(q)$&$\Tw(t)$\\ \hline
 0 & 856617 & \includegraphics[scale=0.3]{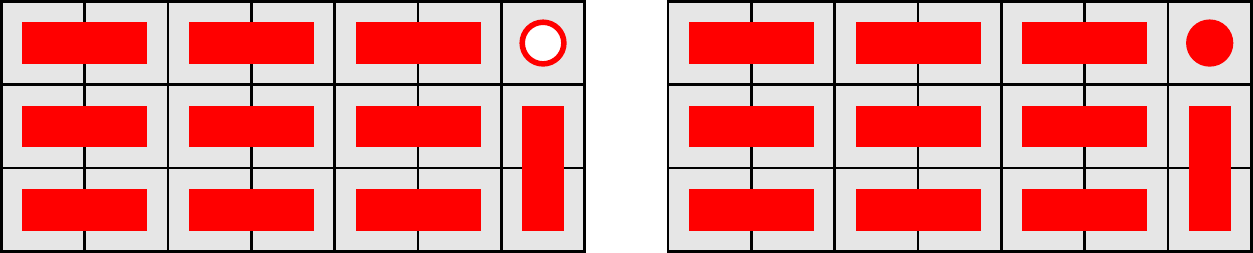} & $-1$ & $0$  \\ \hline
 1 & 8182 & \includegraphics[scale=0.3]{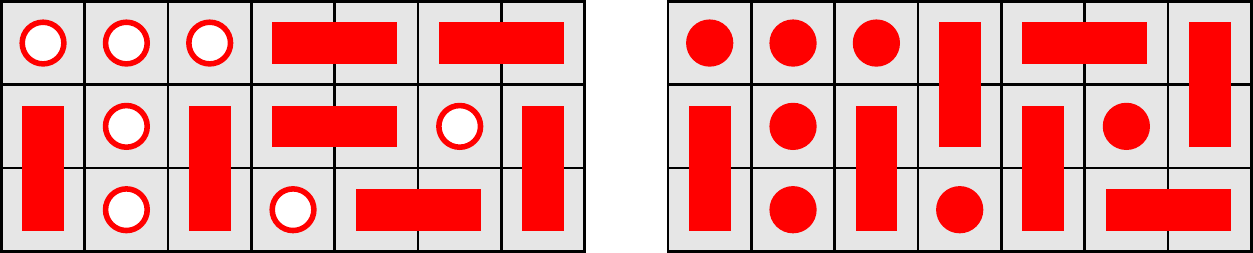} & $-q$ & $-1$  \\ \hline
 2 & 8182 & \includegraphics[scale=0.3]{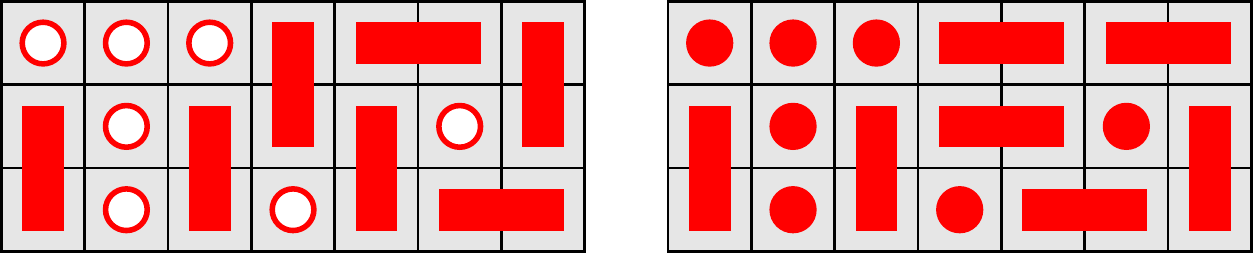} & $-q^{-1}$ & $1$  \\ \hline
 3 & 3565 & \includegraphics[scale=0.3]{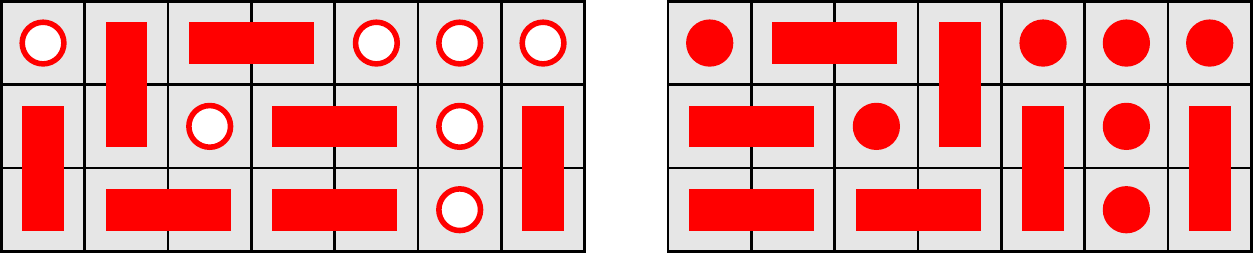} & $-2 + q^{-1}$ & $-1$  \\ \hline
 4 & 3565 & \includegraphics[scale=0.3]{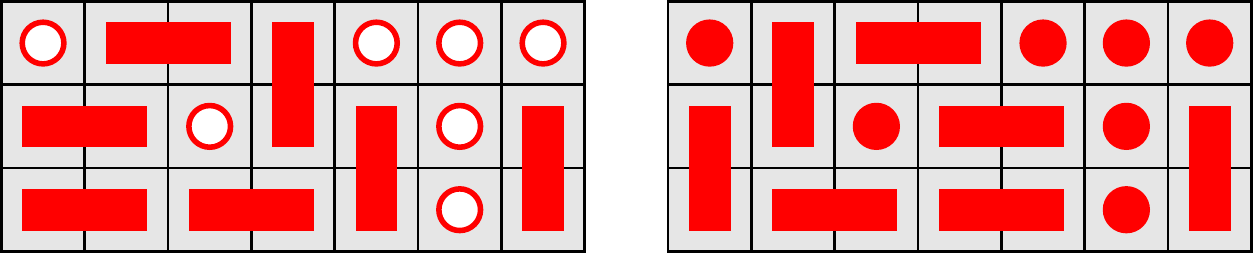} & $q - 2$ & $1$  \\ \hline
 5 & 9 & \includegraphics[scale=0.3]{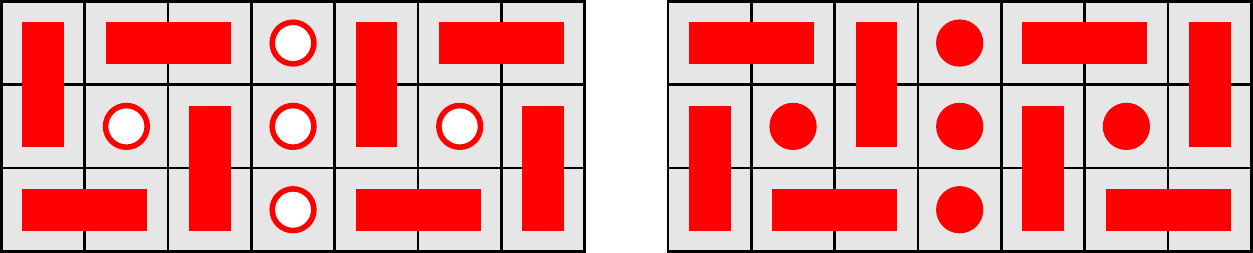} & $-2q + 1$ & $-2$  \\ \hline
 6 & 9 & \includegraphics[scale=0.3]{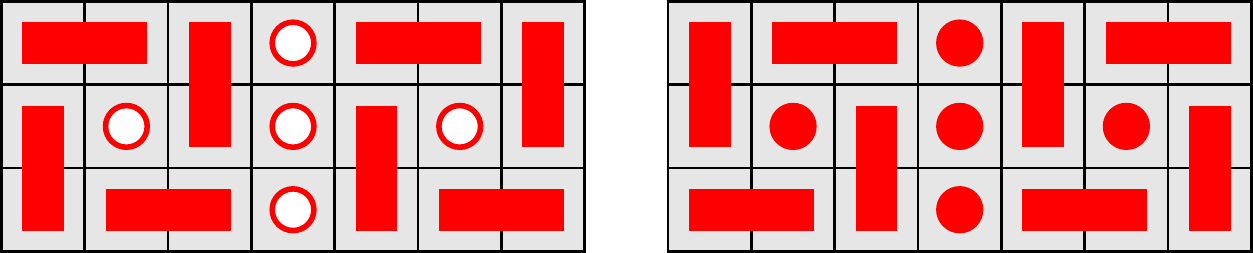} & $1 - 2q^{-1}$ & $2$   \\ \hline 
 7 & 7 & \includegraphics[scale=0.3]{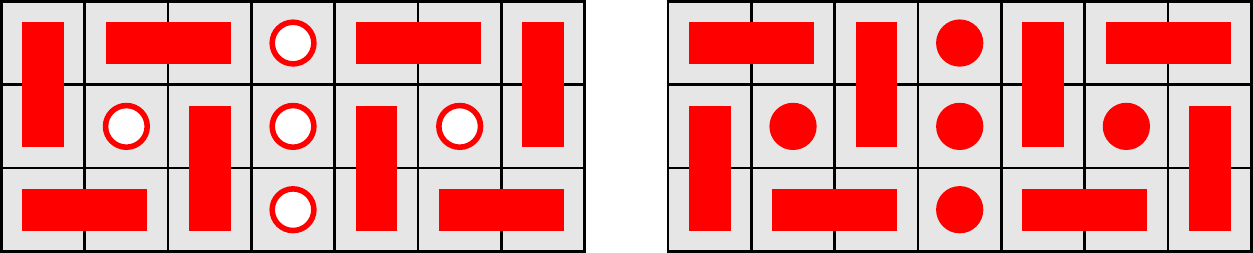} & $-q + 1 - q^{-1}$ & $0$  \\ \hline 
 8 & 7 & \includegraphics[scale=0.3]{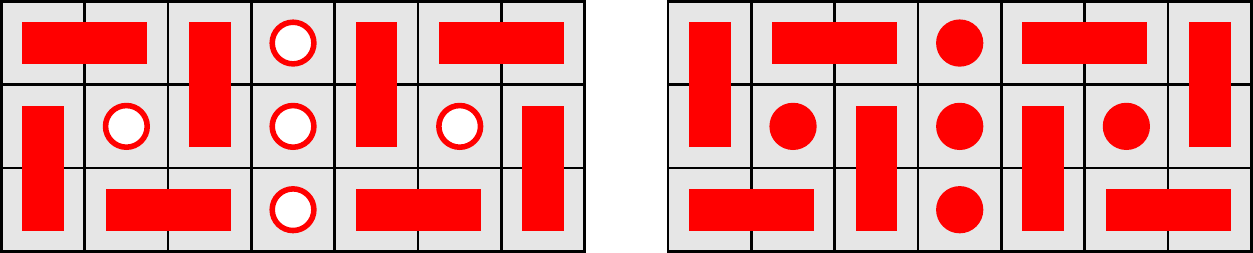} & $-q + 1 - q^{-1}$ & $0$  \\ \hline
 9 & 5 & \includegraphics[scale=0.3]{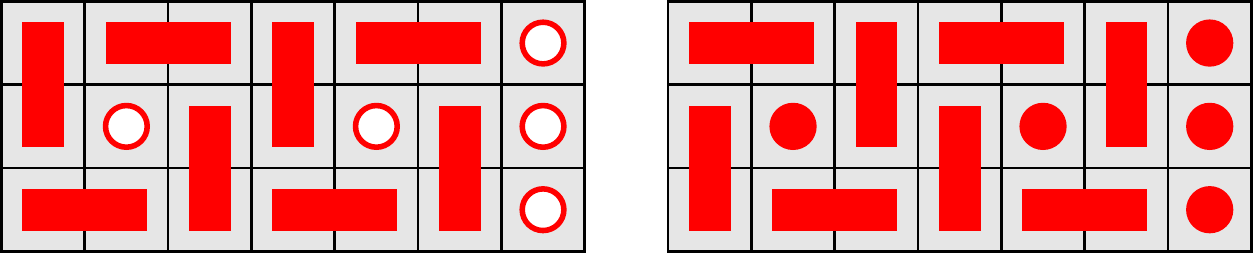} & $-q - 1 + q^{-1}$ & $-2$  \\ \hline
 10 & 5 & \includegraphics[scale=0.3]{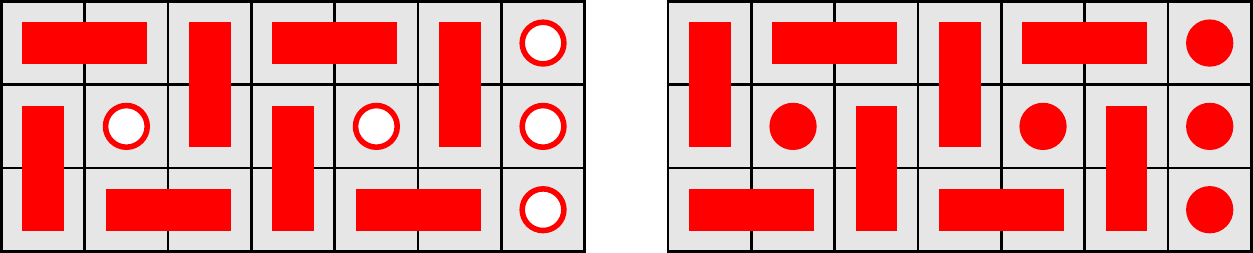} & $q - 1 - q^{-1}$ & $2$  \\ \hline
 11 & 5 & \includegraphics[scale=0.3]{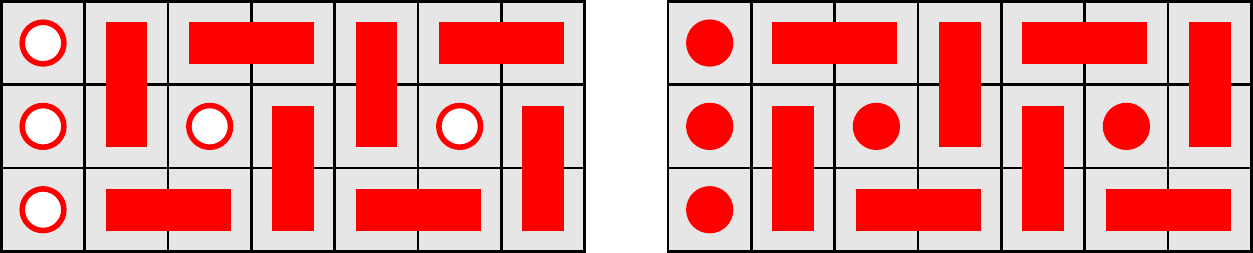} & $-q - 1 + q^{-1}$ & $-2$  \\ \hline
 12 & 5 & \includegraphics[scale=0.3]{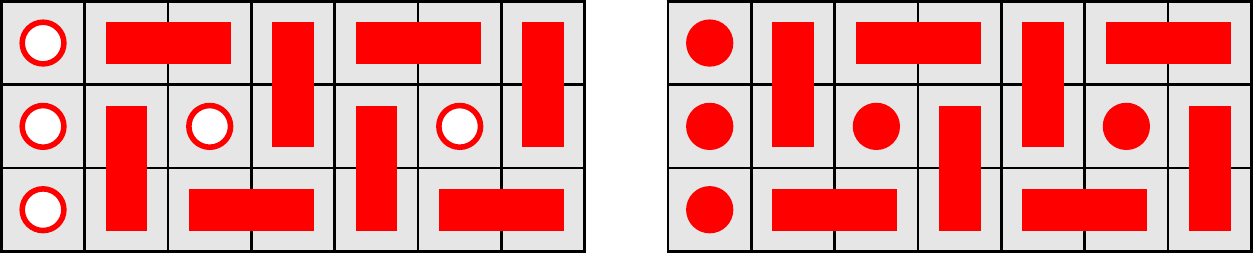} & $q - 1 - q^{-1}$ & $2$   \\ \hline

\end{tabular}
\caption{Flip connected components of a $7 \times 3 \times 2$ box.}
\label{table:CC732}
\end{table}

We readily notice that the invariant does a good job of separating flip connected components, albeit not a perfect one: the pairs of connected components 7 and 8, 9 and 11, and 10 and 12 have the same $P_t(q)$. 

Figure \ref{fig:box732_CCdiagram} shows a diagram of the flip connected components, arranged by their twists. We also notice that we can always reach a tiling from any other via a sequence of flips and trits.

\begin{figure}[ht]
\centering
\includegraphics[width=0.5\columnwidth]{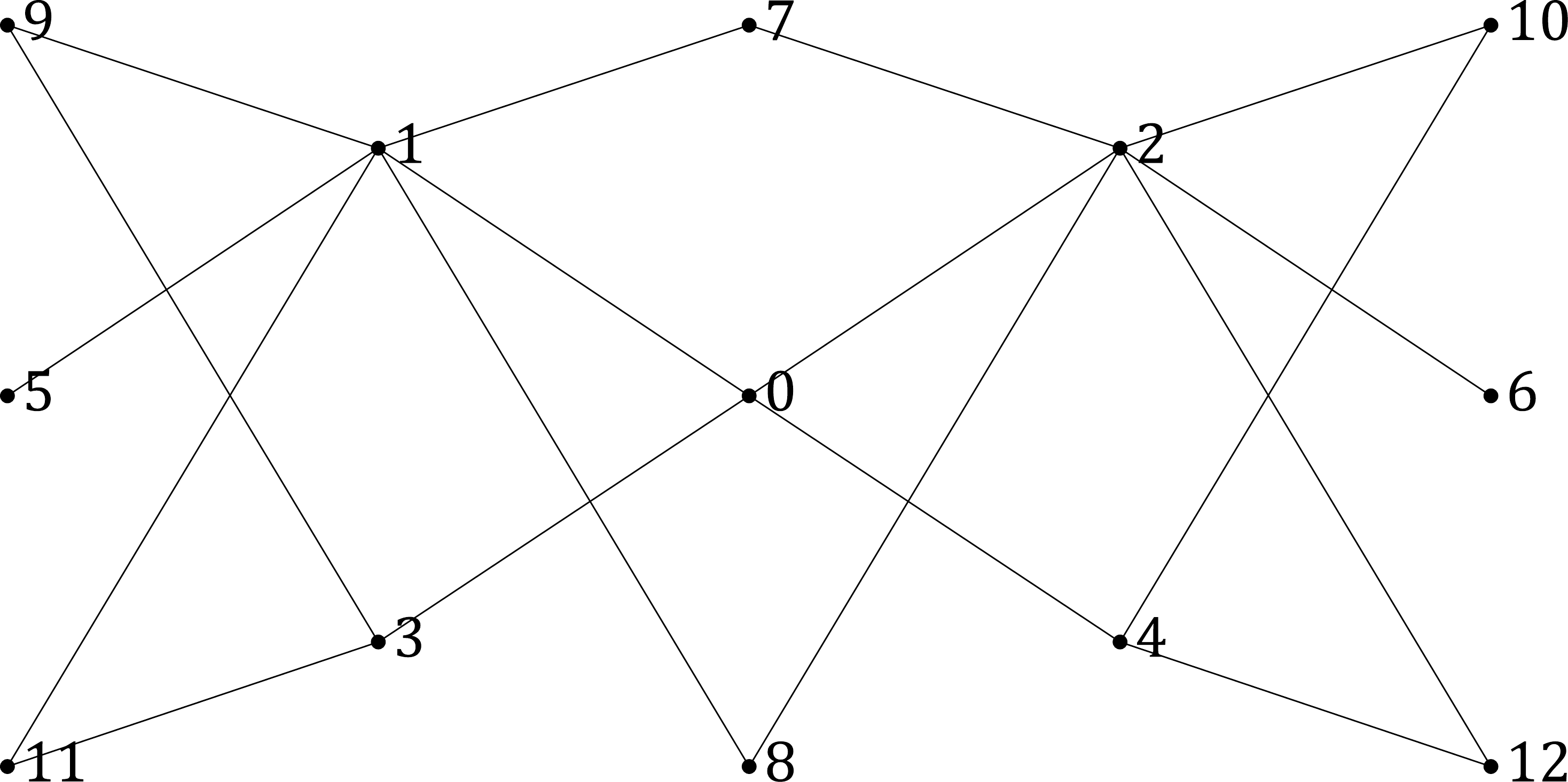}%
\caption{Flip connected components of the $7 \times 3 \times 2$ box, arranged by twist in increasing order. The numbering is the same as in table \ref{table:CC732}, and two dots $A$ and $B$ are connected if there exists a trit that takes a tiling in connected component $A$ to a tiling in connected component $B$. As we proved earlier, a trit from left to right in the diagram is always a positive trit; and a trit from right to left is always a negative one. }%
\label{fig:box732_CCdiagram}%
\end{figure}
\end{example}

\begin{example}[A region with two unequal floors] \label{example:unequalFloors}

\begin{figure}[ht]%
\centering
\includegraphics[width=0.6\columnwidth]{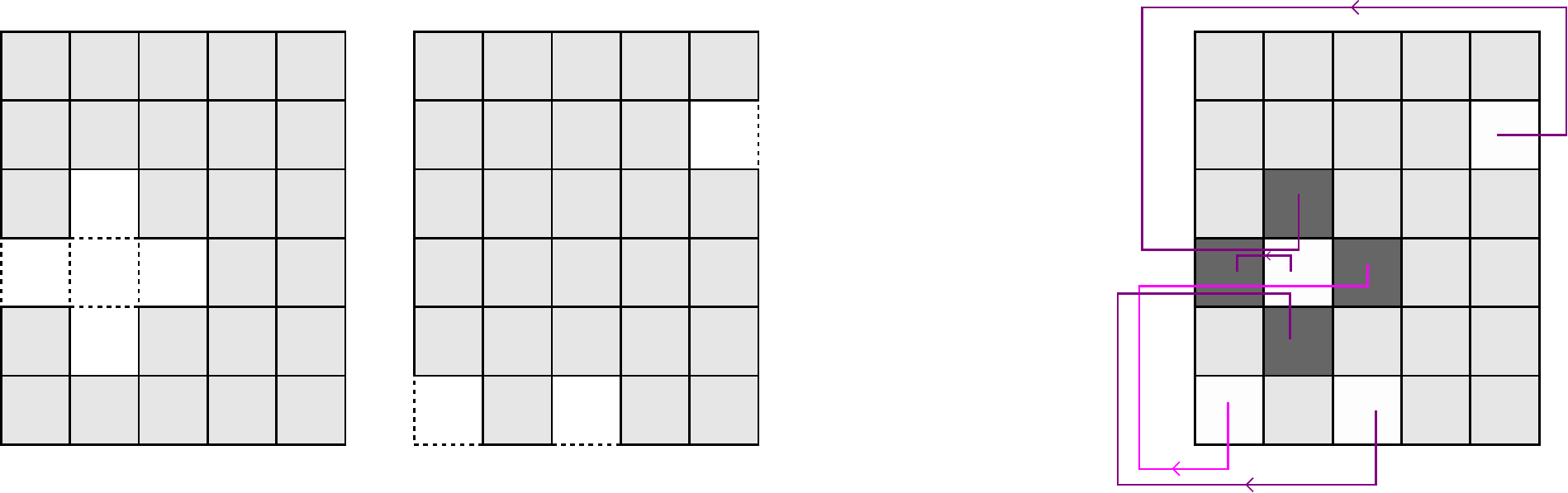}%
\caption{A region with two unequal floors, together with a choice of connections between the sources and the sinks. This choice of connections is the one used for the calculations in Table \ref{table:biggerRegion_CC}.}
\label{fig:bigger_region_sourcesAndSinks}%
\end{figure}

Figure \ref{fig:bigger_region_sourcesAndSinks} shows a region with two unequal floors, together with a choice of how to join sources and sinks. It has $642220$ tilings, and $30$ connected components. 

Table \ref{table:biggerRegion_CC} shows some information about these components and Figure \ref{fig:bigGraph_CC_unequal} shows a diagram of the flip connected components, arranged by their twists. This graph is not as symmetric as the one in the previous example; nevertheless, the space of tilings is also connected by flips and trits in this case.

\begin{table}[hpt]
\centering
\footnotesize
\begin{tabular}{|c|c|c|c|}
\hline
 \parbox[c]{2cm}{\centering Connected \\ Component} & \parbox[c]{2cm}{\centering Number of \\tilings} &$P_t(q)$ & $\Tw(t)$\\ \hline
 
0 & \num{165914} & $-2q -q^{-1}$ & $-1$ \\ \hline
1 & \num{153860} & $-q -1 -q^{-1}$ & $0$ \\ \hline
2 & \num{92123} & $-2q -1$ & $-2$ \\ \hline
3 & \num{56936} & $-q -1 -q^{-2}$ & $1$ \\ \hline
4 & \num{50681} & $-q -2$ & $-1$ \\ \hline
5 & \num{41236} & $-2q -q^{-2}$ & $0$ \\ \hline
6 & \num{17996} & $-2 -q^{-1}$ & $1$ \\ \hline
7 & \num{13448} & $-q -2q^{-1}$ & $1$ \\ \hline
8 & \num{11220} & $-3q$ & $-3$ \\ \hline
9 & \num{8786} & $-2 -q^{-2}$ & $2$ \\ \hline
10 & \num{7609} & $-q -q^{-1} -q^{-2}$ & $2$ \\ \hline
11 & \num{6423} & $-2q + 1 -2q^{-1}$ & $0$ \\ \hline
12 & \num{4560} & $-3q + 1 -q^{-1}$ & $-2$ \\ \hline
13 & \num{4070} & $-3$ & $0$ \\ \hline
14 & \num{3299} & $-2q + 1 -q^{-1} -q^{-2}$ & $1$ \\ \hline
15 & \num{2097} & $-1 -2q^{-1}$ & $2$ \\ \hline
16 & \num{1382} & $-1 -q^{-1} -q^{-2}$ & $3$ \\ \hline
17 & \num{221} & $-q + 1 -3q^{-1}$ & $2$ \\ \hline
18 & \num{137} & $-q + 1 -2q^{-1} -q^{-2}$ & $3$ \\ \hline
19 & \num{51} & $-3q^{-1}$ & $3$ \\ \hline
20 & \num{48} & $-2q -1$ & $-2$ \\ \hline
21 & \num{36} & $-2q^{-1} -q^{-2}$ & $4$ \\ \hline
22 & \num{17} & $-3q^{-1}$ & $3$ \\ \hline
23 & \num{17} & $-2q^{-1} -q^{-2}$ & $4$ \\ \hline
24 & \num{16} & $-1 -2q^{-1}$ & $2$ \\ \hline
25 & \num{16} & $-1 -q^{-1} -q^{-2}$ & $3$ \\ \hline
26 & \num{12} & $-2q + 2 -3q^{-1}$ & $1$ \\ \hline
27 & \num{7} & $-2q + 2 -2q^{-1} -q^{-2}$ & $2$ \\ \hline
28 & \num{1} & $1 -4q^{-1}$ & $4$ \\ \hline
29 & \num{1} & $1 -3q^{-1} -q^{-2}$ & $5$ \\ \hline

\end{tabular}
\caption{Information about the flip connected components of the region $R$ from Figure \ref{fig:bigger_region_sourcesAndSinks}.}
\label{table:biggerRegion_CC}
\end{table}

\begin{figure}[ht]%
\centering
\includegraphics[width=0.6\columnwidth]{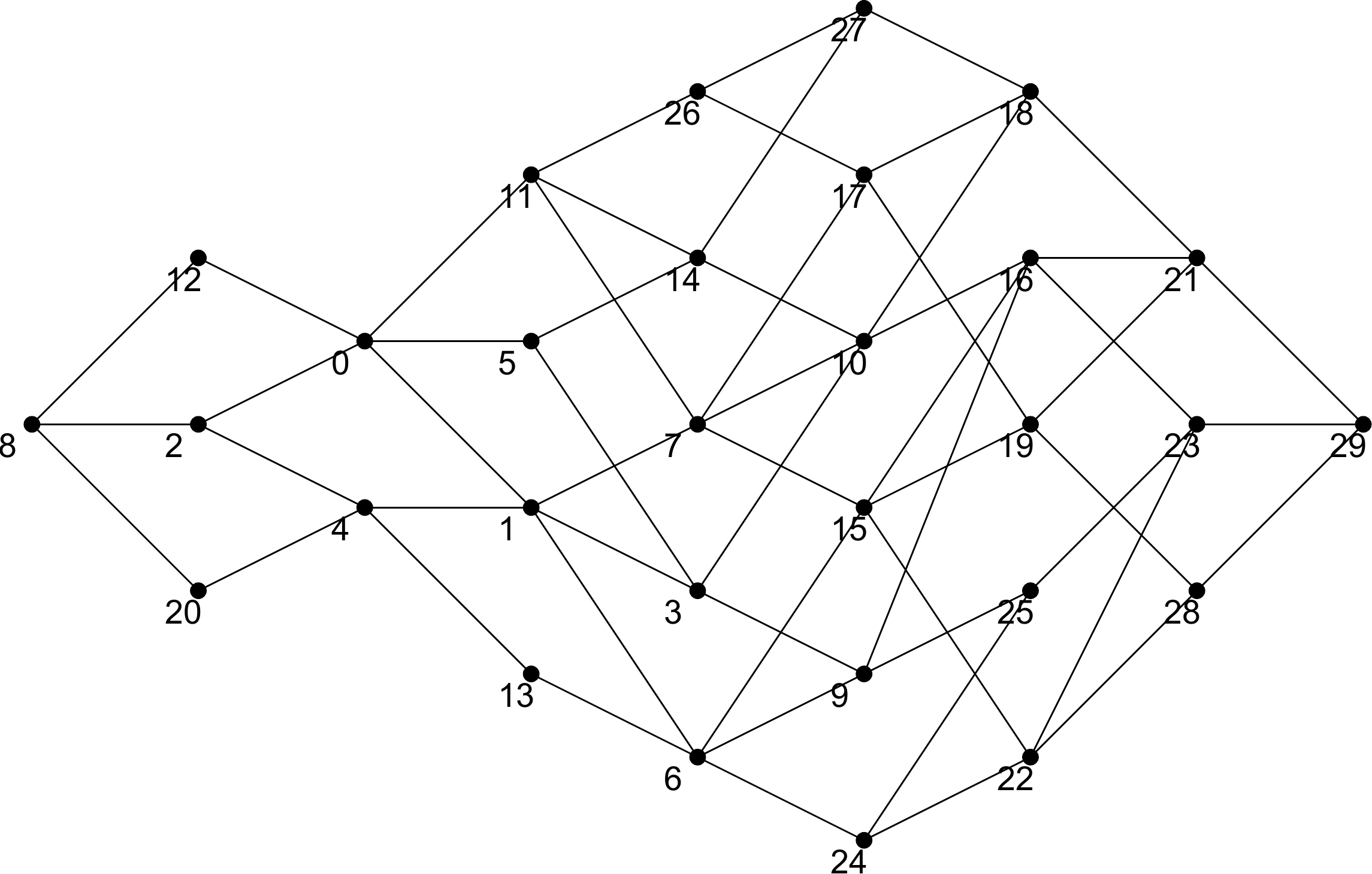}%
\caption{Graph with $30$ vertices, each one representing a connected component of the region. As in Figure \ref{fig:box732_CCdiagram}, two vertices are connected if there exists a trit taking a tiling in one component to a tiling in the other; a trit from left to right is always positive.}%
\label{fig:bigGraph_CC_unequal}%
\end{figure}

\end{example}

\section{The invariant in more space}
\label{sec:twoFloorsMoreSpace}
We already know that tilings that are not in the same flip connected component may have the same polynomial invariant, even in the case with two equal simply connected floors (see, for instance, Example \ref{example:732box}). However, as we will see in this section, this is rather a symptom of lack of space than anything else. 

More precisely, suppose $R$ is a duplex region and $t$ is a tiling of $R$. If $B$ is an $L \times M \times 2$ box (or a two-floored box) containing $R$, then $B \setminus R$ can be tiled in an obvious way (using only $z$ dimers). Thus $t$ induces a tiling $\hat{t}$ of $B$ that contains $t$; we call this tiling $\hat{t}$ the \emph{embedding} of $t$ in $B$.

\begin{figure}[ht]%
\centering
\includegraphics[width=0.5\columnwidth]{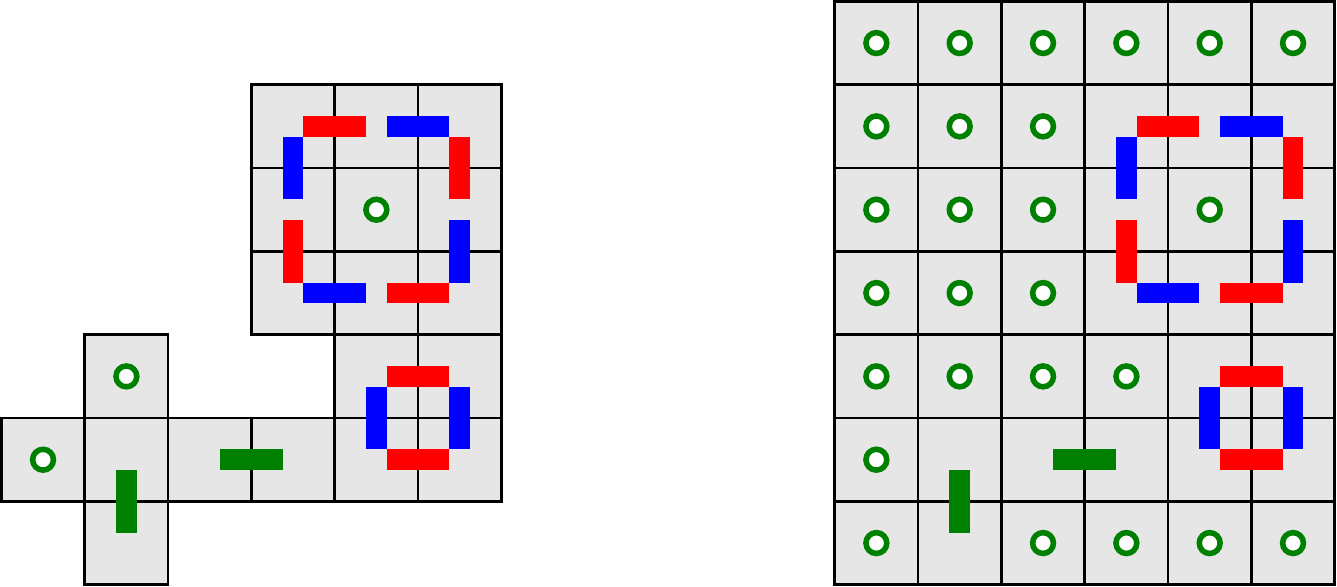}%
\caption{The associated drawing of a tiling, and its embedding in a $6 \times 7 \times 2$ box. }%
\label{fig:embeddingTwoFloors}%
\end{figure}

Another way to look at the embedding $\hat{t}$ of a tiling $t$ is the following: start with the associated drawing of $t$ (which is a plane region), add empty squares until you get an $L \times M$ rectangle, and place a jewel in every empty square. Since the newly added jewels are outside of any cycle in $\hat{t}$, it follows that $P_{\hat{t}}(q) - P_t(q) = k \in \ZZ$, where $k$ is the number of new black jewels minus the number of new white jewels (which depends only on the choice of box $B$ and not on the tiling $t$).

\begin{prop}
\label{prop:moreSpace}
Let $R$ be a duplex region, and let $t_0, t_1$ be two tilings that have the same invariant, i.e., $P_{t_0} = P_{t_1}$. Then there exists a two-floored box containing $R$ such that the embeddings $\hat{t_0}$ and $\hat{t_1}$ of $t_0$ and $t_1$ lie in the same flip connected component.  
\end{prop}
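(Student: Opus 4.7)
The strategy is to embed in a sufficiently large $L \times M \times 2$ box $B$ and show that every tiling of $B$ can be flipped to a canonical tiling depending only on its invariant. Since $P_{\hat{t}}(q) = P_t(q) + k_B$ for a constant $k_B$ depending only on $B$, the embeddings satisfy $P_{\hat{t}_0} = P_{\hat{t}_1}$, so it is enough to produce, for each polynomial $P \in \ZZ[q,q^{-1}]$, a canonical tiling $T_P$ of $B$ with invariant $P$ to which every tiling of $B$ of invariant $P$ reduces by flips.

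The core step is to assemble a toolkit of flip-realizable moves on the associated drawing. Three kinds of moves are fundamental. First, Case~\ref{case:flipzdimer} of Proposition~\ref{prop:twoFloorFlipInvariant} lets one trade an isolated trivial cycle for an adjacent pair of opposite-color jewels, and vice versa. Second, the flips in Cases~\ref{enum:sameorient} and \ref{enum:differentorient} allow the merging and splitting of cycles in their two standard configurations (adjacent with the same orientation, or nested with opposite orientations). Third, chaining Case~\ref{case:fliponefloor} flips yields local isotopies of individual cycles (unit translation, local growing and shrinking along flat portions of the boundary) whenever sufficient cycle-free workspace is available nearby; combined with Case~\ref{case:flipzdimer} moves, this also allows an isolated jewel to be translated by one unit. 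All these derived moves preserve $P_t$.

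The reduction proceeds in two stages. In stage one, using the cycle-free workspace $B \setminus R$ (every added square carries a jewel lying outside every cycle, which we may use as a ``reservoir'' of free space) I drag all the non-trivial cycles and all the jewels of $t$ into a fixed sub-box $B_0 \subset B$, pushing every remaining jewel outside every cycle. In stage two I bring the contents of $B_0$ to a standard configuration $T_P$ constructed explicitly from $P$: the set of ``essential'' jewels and the nesting of cycles enclosing each of them is fixed once and for all as a function of $P$ (roughly, for each signed monomial $\pm q^k$ of $P$ a gadget consisting of a jewel of the color prescribed by the sign enclosed by $|k|$ concentric cycles of the orientation prescribed by the sign of $k$, with excess cancelling jewel pairs absorbed via Case~\ref{case:flipzdimer}). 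An induction on the total number of non-trivial cycles plus the $L^1$-deviation of the jewels from their target positions, using the toolkit above, shows that every tiling of $B$ with invariant $P$ reduces to $T_P$.

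The main obstacle is stage one: one must formally show that a cycle and everything it encloses can be transported freely through regions of the drawing occupied by cycle-free jewels, without affecting $P_t$ or disturbing parts of the drawing that have already been normalized. This amounts to a Thurston-style flip-connectivity argument for the top and bottom floor tilings, carried out carefully so that $z$-dimers are either preserved or relocated in a controlled way through Case~\ref{case:flipzdimer} moves. Once this is granted, the canonical form depends only on $P$, so $\hat{t}_0$ and $\hat{t}_1$ both reach the same $T_P$ and are therefore flip-equivalent in $B$.
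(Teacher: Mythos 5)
Your outline matches the paper's strategy at a high level (a canonical configuration of ``one jewel enclosed by $|k|$ concentric cycles'' for each monomial is exactly the paper's notion of a boxed jewel, and the canonical form is the paper's untangled sock), but the proposal has a genuine gap precisely where the real work lies: you never prove that an arbitrary tiling of $B$ can be brought to the canonical form by flips. Your stage one is stated as ``drag all the non-trivial cycles and all the jewels into a fixed sub-box,'' and you yourself flag it as ``the main obstacle\dots Once this is granted.'' Granting it is the theorem. Translating a cycle together with everything it encloses through jewel-filled workspace is not one of your three verified moves and does not follow from chaining Case~\ref{case:fliponefloor} flips in any obvious way, because the enclosed jewels and nested cycles obstruct the local isotopies; also, an isolated jewel ($z$-dimer) admits no flip at all unless an adjacent antiparallel dimer is present, so ``translating an isolated jewel by one unit'' is not an available primitive --- jewels can only be relocated indirectly, by moving cycles around them. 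An appeal to a ``Thurston-style flip-connectivity argument for the top and bottom floor tilings'' does not repair this: the two floors are not independent (they interact through the $z$-dimers), and flip connectivity of each floor separately is not even a meaningful statement here.

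The paper fills exactly this hole with Lemma~\ref{lemma:homotopicToUntangled}: among socks not flip homotopic to an untangled one, take one of minimal area, locate the rightmost bottommost vertex $v$ of the cycle system, scan the northwest diagonal from $v$ for the first vertex $w$ breaking the staircase pattern, and show that either a single flip move strictly reduces the area (when $w$ lies on a cycle), or $w$ is a jewel that can be \emph{extracted} as a boxed jewel and parked in the guaranteed free space below $v$, after which the remainder has smaller area and the induction applies. This explicit area induction is what replaces your unproved ``dragging'' step, and it never needs to move nested configurations wholesale. A secondary gap: your canonical form requires cancelling a black and a white boxed jewel with the same nonzero winding number, and absorbing them ``via Case~\ref{case:flipzdimer}'' only works for unenclosed adjacent jewels ($k=0$); the enclosed case needs the separate dissolution argument of Lemma~\ref{lemma:untangledSocksInvariant}. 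Your observation that $P_{\hat t}(q)=P_t(q)+k_B$ and the reduction of the proposition to a statement about a canonical form are correct and agree with the paper (Lemma~\ref{lemma:equivEmbeddingHomotopy}), but as it stands the proposal is a plan whose central step is asserted rather than proved.
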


If $t_0$ and $t_1$ already lie in the same flip connected component in $R$, then their embeddings $\hat{t_0}$ and $\hat{t_1}$ in any two-floored box will also lie in the same connected component, because you can reach $\hat{t_1}$ from $\hat{t_0}$ using only flips already available in $R$.

Also, notice that $P_{\hat{t_0}} = P_{\hat{t_1}}$ if and only if $P_{t_0} = P_{t_1}$, because  $P_{\hat{t_1}}(q) - P_{t_1}(q) = P_{\hat{t_0}}(q) - P_{t_0}(q)$. Therefore, Proposition \ref{prop:moreSpace} states that two tilings have the same invariant if and only if there exists a two-floored box where their embeddings lie in the same flip connected component.

The reader might be wondering why we're restricting ourselves to duplex regions. One reason is that for general regions with two simply connected floors, it is not always clear that you can embed them in a large box in a way that their complement is tileable, let alone tileable in a natural way. 

Although it is technically possible to prove Proposition \ref{prop:moreSpace} only by looking at associated drawings, it will be useful to introduce an alternative formulation for the problem. 

Let $G = G(R)$ be the undirected plane graph whose vertices are the centers of the squares in the associated drawing of $R$, and where two vertices are joined by an edge if their Euclidean distance is exactly $1$. A \emph{system of cycles}, or \emph{sock}, in $G$ is a (finite) directed subgraph of $G$ consisting only of disjoint oriented (simple) cycles. An \emph{edge} of a sock is an (oriented) edge of one of the cycles, whereas a \emph{jewel} is a vertex of $G$ that is not contained in the system of cycles.

\begin{figure}%
\centering
\includegraphics[width=0.6\columnwidth]{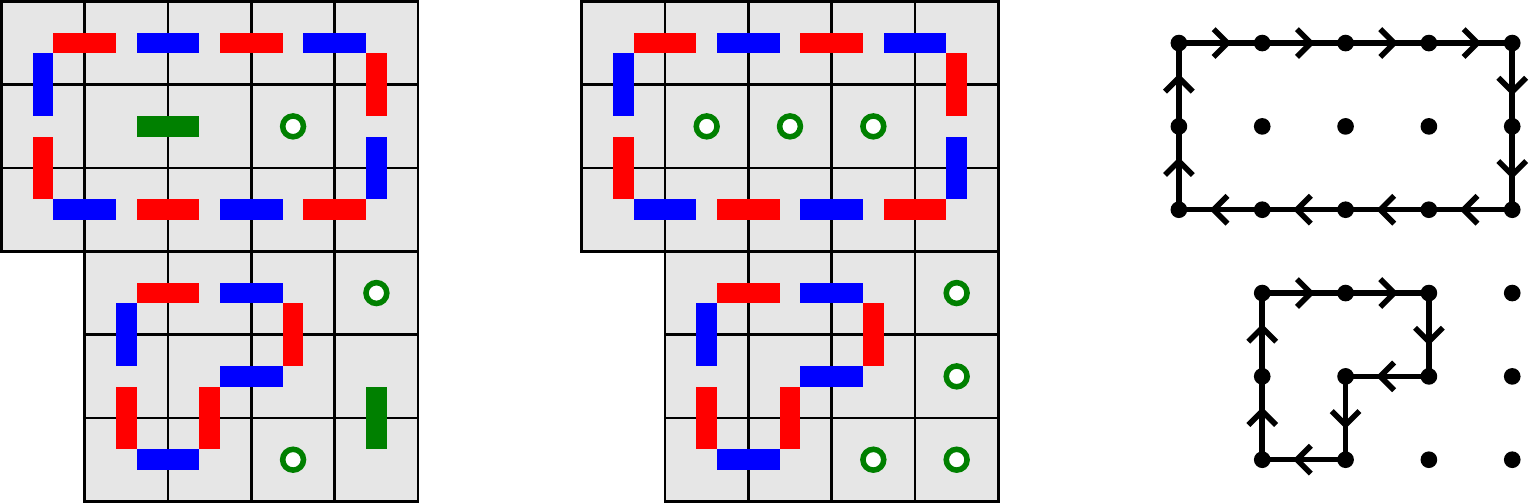}%
\caption{A tiling with two trivial cycles; the same tiling with the two trivial cycles flipped into jewels; and the system of cycles that corresponds to both of them.}%
\label{fig:tilingTwoFloors_withSOC}%
\end{figure}
There is an ``almost'' one-to-one correspondence between the systems of cycles of $G$ and the tilings of $R$, which is illustrated in Figure \ref{fig:tilingTwoFloors_withSOC}. In fact, tilings with trivial cycles have no direct interpretation as a system of cycles; but since all trivial cycles can be flipped into a pair of adjacent jewels, we can think that every sock represents a set of tilings, all in the same flip connected component.

We would now like to capture the notion of a flip from the world of tilings to the world of socks. This turns out to be rather simple: a \emph{flip move} on a sock is one of three types of moves that take one sock into another, shown in Figure \ref{fig:flipsteps}. Notice that performing a flip move on a sock corresponds to performing one or more flips on its corresponding tiling. 

\begin{figure}[ht]%
\centering
\def\svgwidth{0.3\columnwidth}
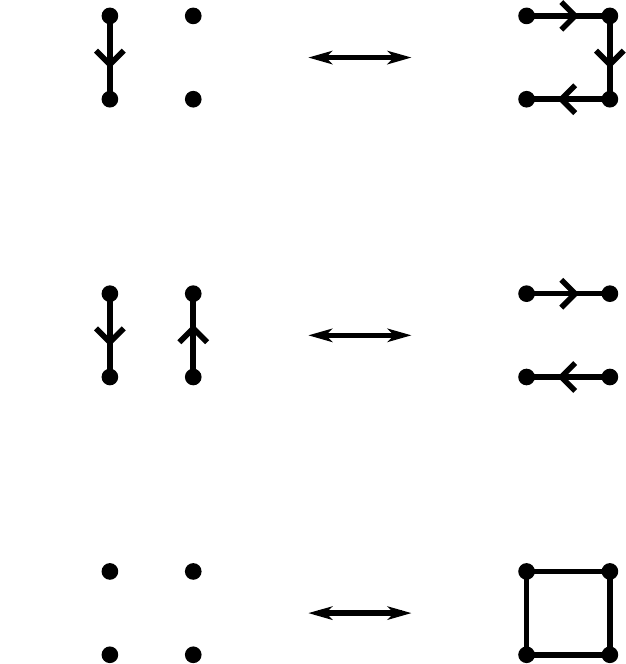
\caption{The three types of flip moves. The square in type c) is not oriented because it can have either one of the two possible orientations.}%
\label{fig:flipsteps}%
\end{figure}

A \emph{flip homotopy} in $G$ between two socks $s_1$ and $s_2$ is a finite sequence of flip moves taking $s_1$ into $s_2$. If there exists a flip homotopy between two socks, they are said to be \emph{flip homotopic} in $G$. Notice that two tilings are in the same flip connected component of $R$ if and only if their corresponding socks are flip homotopic in $G$, because every flip can be represented as one of the flip moves (and the flip that takes a trivial cycle into two jewels does not alter the corresponding sock). Figure \ref{fig:flipStepsExample} shows examples of flips and their corresponding flip moves.

\begin{figure}[ht]%
\centering
\includegraphics[width=0.7\columnwidth]{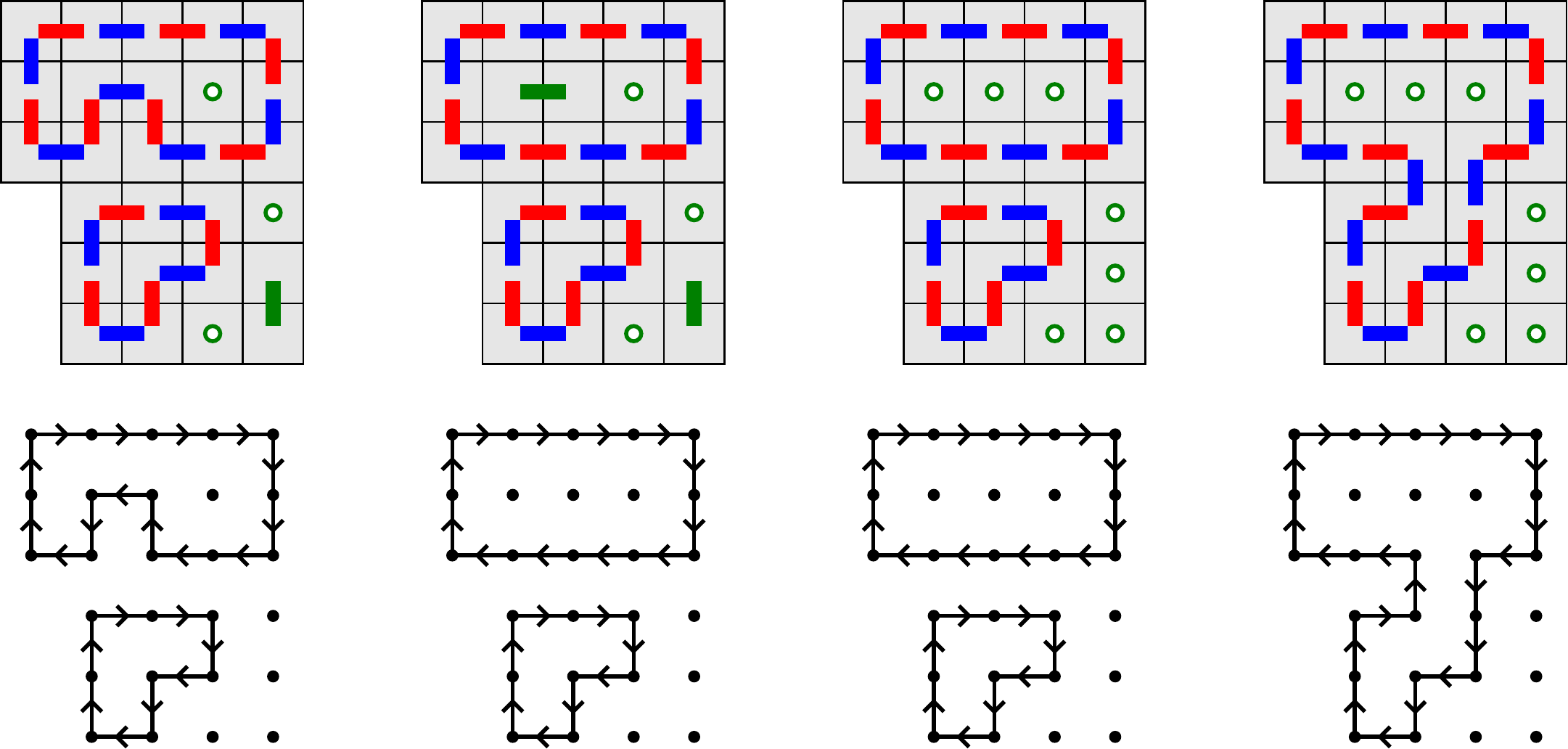}%
\caption{Examples of how flips affect the corresponding sock. The first flip induces a flip move of type (a), The second one does not alter the corresponding sock, and the third one induces a flip move of type (b). }%
\label{fig:flipStepsExample}%
\end{figure}

One advantage of this new interpretation is that we can easily add as much space as we need without an explicit reference to a box. In fact, notice that $G$ is a subgraph of the infinite graph $\ZZ^2$, so that a sock in $G$ is also a finite subgraph of $\ZZ^2$, so that we may see it as a system of cycles in $\ZZ^2$.

\begin{lemma}
\label{lemma:equivEmbeddingHomotopy}
For two tilings $t_0$ and $t_1$ of a region $R$, the following assertions are equivalent:
\begin{enumerate}[label=(\roman*)]
	\item \label{item:embeddedBox} There exists a two-floored box $B$ containing $R$ such that the embeddings of $t_0$ and $t_1$ in $B$ lie in the same connected component.
	\item \label{item:flipHomotopicSOC} The corresponding systems of cycles of $t_0$ and $t_1$ are flip homotopic in $\ZZ^2$.
\end{enumerate}  
\end{lemma}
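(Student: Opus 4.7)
The plan is to prove both implications by using that the sock of a tiling only cares about non-trivial cycles and ignores jewels, and that each flip move on a sock has bounded support in $\ZZ^2$.

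For \ref{item:embeddedBox} $\Rightarrow$ \ref{item:flipHomotopicSOC}, I would first observe that passing from $t$ to its embedding $\hat{t}$ only fills $B\setminus R$ with $z$-dimers, so in the associated drawing these new squares become jewels. Since jewels are precisely the vertices not in the sock, the sock of $\hat{t}$ is literally the same finite subgraph of $\ZZ^2$ as the sock of $t$. A flip sequence in $B$ connecting $\hat{t_0}$ to $\hat{t_1}$ therefore induces, flip by flip, a sequence of flip moves on the sock (with some flips — those converting a trivial cycle to a pair of adjacent jewels or the reverse — leaving the sock unchanged). All these moves live in $G(B)\subseteq \ZZ^2$, so they form a flip homotopy in $\ZZ^2$ between the socks of $t_0$ and $t_1$.

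For \ref{item:flipHomotopicSOC} $\Rightarrow$ \ref{item:embeddedBox}, suppose $s_0 = s^{(0)}\to s^{(1)}\to\cdots\to s^{(N)} = s_1$ is a flip homotopy in $\ZZ^2$, where $s_0, s_1$ are the socks of $t_0, t_1$. Only finitely many vertices of $\ZZ^2$ ever appear in some $s^{(i)}$ or in the support of some flip move, so I can pick a two-floored box $B$ whose floor is a rectangle large enough to contain $D$ together with all these vertices. Then the entire flip homotopy takes place inside $G(B)$. Starting from $\hat{t_0}$ (whose sock is $s^{(0)}$) I would realize each flip move of type (a), (b), or (c) as one or more actual flips in $B$, as described in the paragraph preceding the lemma, to arrive after $N$ steps at some tiling $\tilde t$ of $B$ whose sock equals $s^{(N)} = s_1$. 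Since $\hat{t_1}$ also has sock $s_1$, the tilings $\tilde t$ and $\hat{t_1}$ differ only by flips interchanging trivial cycles with pairs of adjacent jewels, which is a sequence of flips; concatenating gives a flip path from $\hat{t_0}$ to $\hat{t_1}$ in $B$.

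The main obstacle is simply the bookkeeping around trivial cycles: a single sock may represent several tilings, and the correspondence between "one flip move on the sock" and "one or several flips on the tiling" must be invoked carefully enough to guarantee that the intermediate tilings along the realization really exist and sit inside $B$. Once this is handled, both directions reduce to the fact that a flip homotopy involves only finitely many vertices of $\ZZ^2$, so it can always be confined to, or extended from, a sufficiently large box containing $R$.
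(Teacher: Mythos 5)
Your proposal is correct and follows essentially the same route as the paper: both directions come down to the equivalence ``tilings in the same flip connected component of a region $\Leftrightarrow$ their socks are flip homotopic in the region's graph,'' plus the observation that a flip homotopy in $\ZZ^2$ touches only finitely many vertices and can therefore be confined to a sufficiently large two-floored box $B$ (equivalently, $B$ chosen so that all vertices outside $G(B)$ are jewels throughout the homotopy). Your extra bookkeeping about trivial cycles and about realizing flip moves as actual flips is exactly the ``almost one-to-one correspondence'' the paper establishes just before the lemma, so nothing is missing.
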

\begin{proof}
To see that \ref{item:flipHomotopicSOC} implies \ref{item:embeddedBox}, notice the following:
if $s_0, s_1, \ldots, s_n$ are the socks involved in the flip homotopy between the socks of $t_0$ and $t_1$ in $\ZZ^2$, let $B$ be a sufficiently large two-floored box such that $(\ZZ^2 \setminus G(B))$ contains only vertices of $\ZZ^2$ that are jewels in all the $n+1$ socks $s_0, s_1, \ldots, s_n$. Then the socks of $t_0$ and $t_1$ are flip homotopic in $G(B)$, so the embeddings of $t_0$ and $t_1$ lie in the same flip connected component.

The converse is obvious, since if the socks of $t_0$ and $t_1$ are flip homotopic in $G(B)$ for some two-floored box $B$, then they are flip homotopic in $\ZZ^2$.
\end{proof} 

A vertex $v \in \ZZ^2$ is said to be white (resp. black) if the sum of its coordinates is even (resp. odd). If $s$ is a system of cycles in $\ZZ^2$, we define the graph invariant of $s$ as $$P_s(q) = \sum_{\substack{j \scalebox{0.7}{\mbox{ black jewel}}\\ k_s(j) \neq 0}}q^{k_s(j)} - \sum_{\substack{j \scalebox{0.7}{\mbox{ white jewel}}\\ k_s(j) \neq 0}}q^{k_s(j)},$$
where $k_s(j)$ is the sum of the winding numbers of all the cycles in $s$ (as curves) with respect to $j$; this is a finite sum because there is a finite number of jewels $j$ for which $k_s(j) \neq 0$: they are among those jewels that are enclosed by cycles of $s$. Notice that if $t$ is a tiling of $R$ and $s$ is its corresponding sock in $\ZZ^2$, $P_t(q) - P_s(q) = P_t(1) - P_s(1)$, which equals the $q^0$ term in $P_t$. Since $P_t(1)$ equals the number of black squares minus the number of white squares in $R$ (thus does not depend on $t$), it follows that $P_t(q)$ is completely determined by $P_s(q)$.

A corollary of Proposition \ref{prop:twoFloorFlipInvariant} is that if two systems of cycles $s_0$ and $s_1$ are flip homotopic in $\ZZ^2$, then $P_{s_0} = P_{s_1}$. We now set out to prove that the converse also holds, which will establish Proposition \ref{prop:moreSpace}.

A \emph{boxed jewel} is a subgraph of $\ZZ^2$ formed by a single jewel enclosed by a number of square cycles (cycles that are squares when thought of as plane curves), all with the same orientation. Figure \ref{fig:boxedJewelsExample} shows examples of boxed jewels. Working with boxed jewels is easier, for if they have ``free space'' in one direction (for instance, if there are no cycles to the right of it), they can move an arbitrary even distance in that direction; the simplest case is illustrated in Figure \ref{fig:boxedJewelsMove}. More complicated boxed jewels move just as easily: we first turn the outer squares into rectangles, then we move the inner boxed jewel, and finally we close the outer squares again.

\begin{figure}[ht]%
\centering
\subfloat[]{\includegraphics[width=0.06\columnwidth]{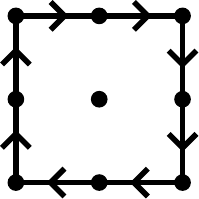}}\qquad
\subfloat[]{\includegraphics[width=0.24\columnwidth]{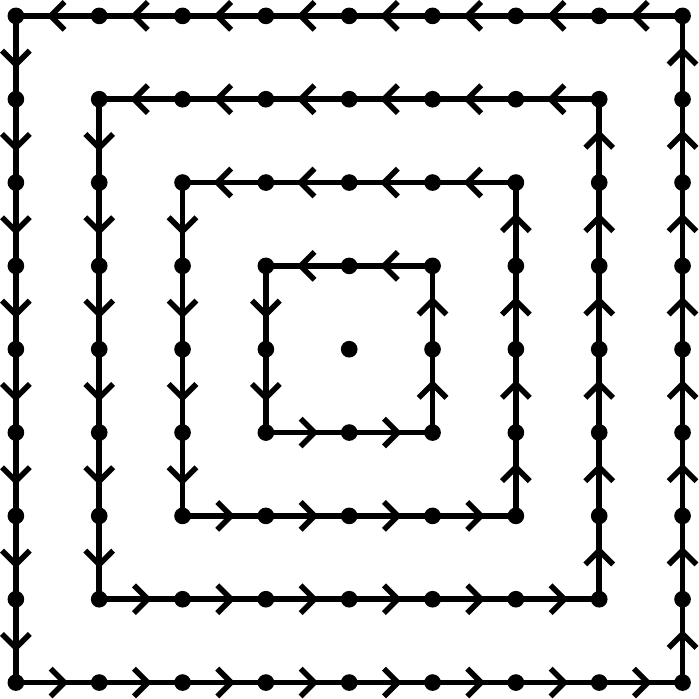}}
\caption{Two examples of boxed jewels.}%
\label{fig:boxedJewelsExample}%
\end{figure} 

\begin{figure}%
\centering
\includegraphics[width=0.7\columnwidth]{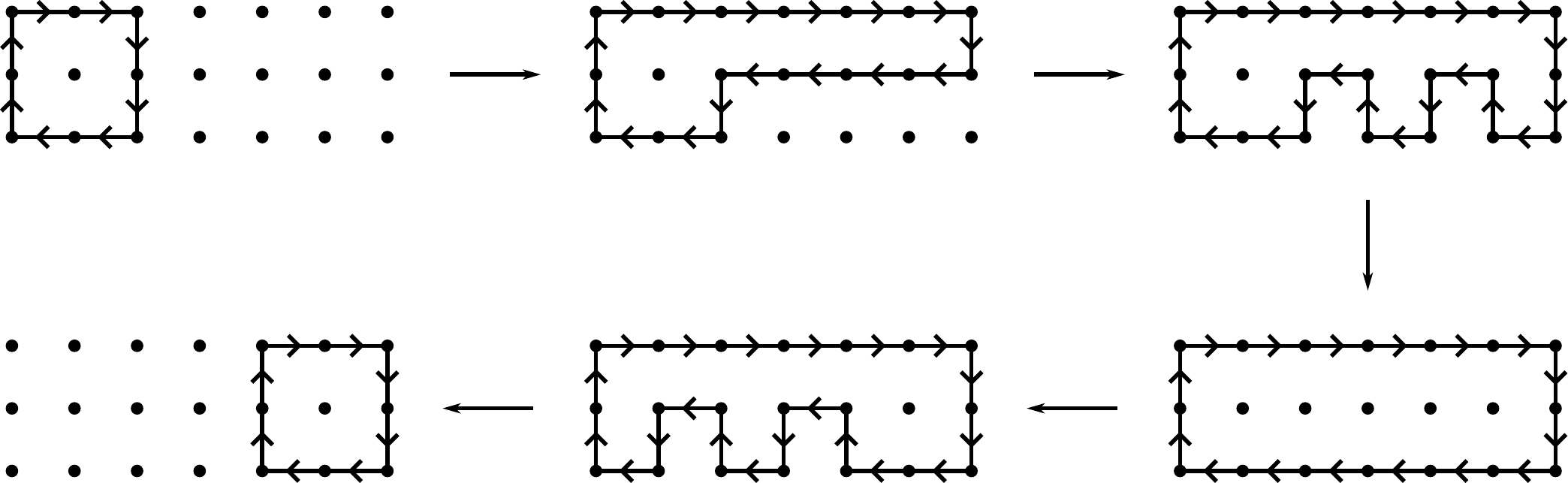}%
\caption{A boxed jewel with ``free space'' to the right. Starting from the first sock, we perform: four flip moves of type (a); two flip moves of type (a); two flip moves of type (a) to create a rectangle; two flip moves of type (a); and finally, the last sock is obtained by performing six flip moves of type (a) on the penultimate sock.}%
\label{fig:boxedJewelsMove}%
\end{figure}

An \emph{untangled} sock is a sock that contains only boxed jewels, and such that the center of each boxed jewel is of the form $(n, 0)$ for some $n \in \ZZ$ (that is, all the enclosed jewels lie in the $x$ axis), as illustrated in Figure \ref{fig:untangledSockExample}. Therefore, each boxed jewel in an untangled sock moves very easily: it has free space both downwards and upwards.

\begin{figure}%
\centering
\includegraphics[width=0.6\columnwidth]{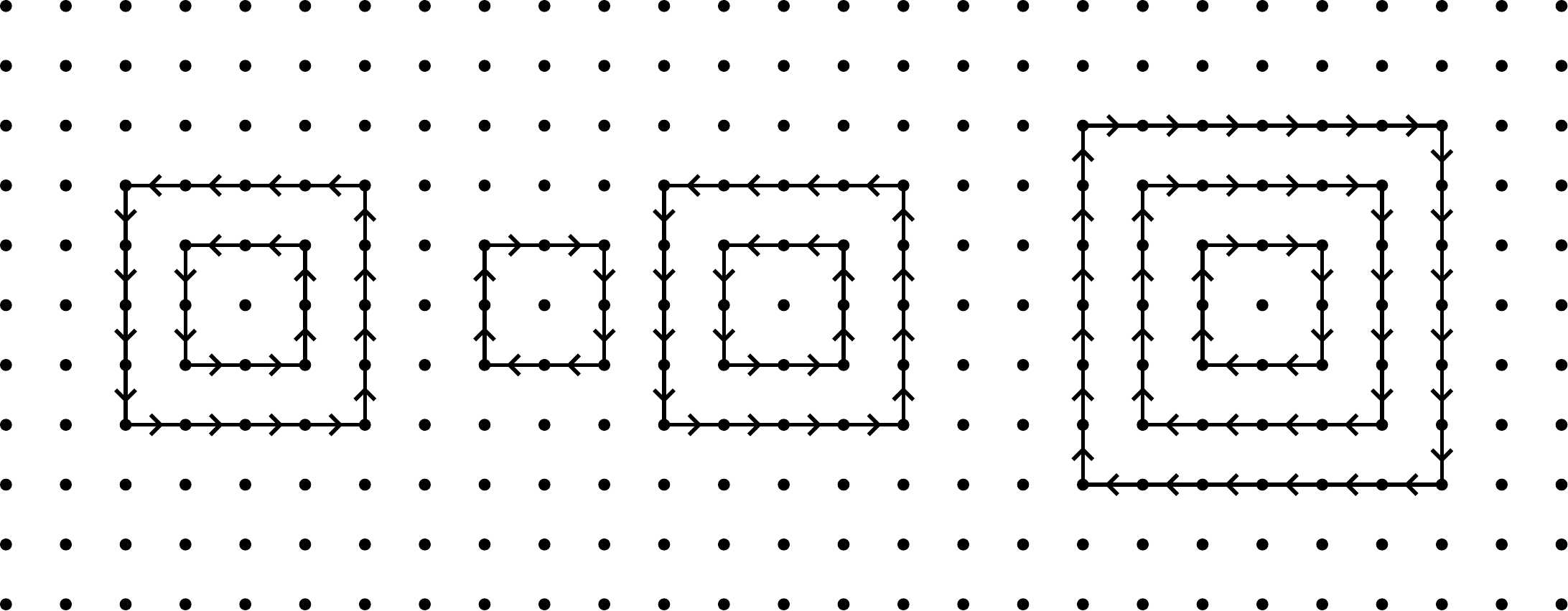}%
\caption{Example of an untangled sock.}%
\label{fig:untangledSockExample}%
\end{figure}


\begin{lemma}
\label{lemma:untangledSocksInvariant}
Two untangled socks that have the same invariant are flip homotopic in $\ZZ^2$.
\end{lemma}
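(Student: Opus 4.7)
The plan is to reduce every untangled sock, via a sequence of flip moves in $\ZZ^2$, to a canonical normal form depending only on $P_s$, and then to observe that two socks with the same invariant reduce to the same form. The strategy breaks into two basic manipulations: rearrangement of boxed jewels along the $x$-axis, and pairwise cancellation of two boxed jewels of the same winding and opposite colours.

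For the rearrangement, I would extend the free-motion argument of Figure \ref{fig:boxedJewelsMove}: every boxed jewel on the $x$-axis has empty space both above and below, so it can first be lifted into the upper half-plane by type (a) moves growing a detour above it, then translated horizontally past its neighbours, and finally lowered back onto the axis. Consequently the linear order of boxed jewels along the $x$-axis is completely at our disposal, and in particular any two of them can be brought into adjacent positions with ample free space on both sides.

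The heart of the argument is the claim that two boxed jewels of the same winding $k\neq 0$ and opposite central colours form, in isolation, a sock which is flip-homotopic to the empty sock. After bringing the pair adjacent on the $x$-axis, I would merge their $|k|$ nested square cycles layer by layer through type (b) flip moves, producing a nested bubble of $|k|$ cycles each enclosing precisely the two original centres and no other vertex. Because the two enclosed vertices have opposite colours, at every nesting level the black contribution $q^m$ cancels the white contribution $-q^m$, so the total invariant is zero throughout. This cancellation is what allows each cycle to be collapsed past the enclosed pair: I perform pairs of type (a) moves that remove one vertex of each colour from the enclosed region simultaneously, then additional type (a) moves to shrink the now-empty cycle to a length-$4$ square, and finally a type (c) move to destroy it. Iterating over the $|k|$ layers annihilates the pair.

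Granting pair annihilation, any untangled sock reduces to a canonical normal form in which, for each $k\neq 0$ with $c_k := [q^k] P_s \neq 0$, there are $|c_k|$ boxed jewels of winding $k$ all of the same colour (black when $c_k>0$, white when $c_k<0$), arranged on the $x$-axis in a fixed order. This normal form depends only on $P_s$, so $s_0$ and $s_1$ reduce to the same form and are hence flip-homotopic. The main obstacle is making the deformation of a cycle past two oppositely-coloured enclosed vertices precise: one must exhibit, for each nesting level, an explicit pairing of type (a) flip moves whose individual effects on $P_s$ cancel after each consecutive pair, so that the sequence never leaves the class of socks with invariant zero. Once this technical step is in place, the rest of the argument---rearrangement and matching of normal forms---is routine.
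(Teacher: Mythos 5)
Your proposal is correct and takes essentially the same route as the paper: reduce to rearranging the boxed jewels (which move freely along the axis) and to dissolving a pair of boxed jewels with the same degree and opposite central colours by an explicit flip homotopy, which is precisely what the paper does via its figures on boxed-jewel cancellation. One small remark: the worry in your last paragraph is unnecessary, since every flip move automatically preserves $P_s$ (Proposition \ref{prop:twoFloorFlipInvariant}), so no careful pairing of type (a) moves is required --- one only needs to exhibit a legal sequence of moves, as the paper does pictorially.
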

\begin{proof}
If the two socks consist of precisely the same boxed jewels but in a different order, then they are clearly flip homotopic, since we can easily move the jewels around and switch their positions as needed. We only need to check that boxed jewels that cancel out (that is, they refer to terms with the same exponent but opposite signs) can be ``dissolved'' by flip moves.

\begin{figure}%
\centering
\includegraphics[width=0.6\columnwidth]{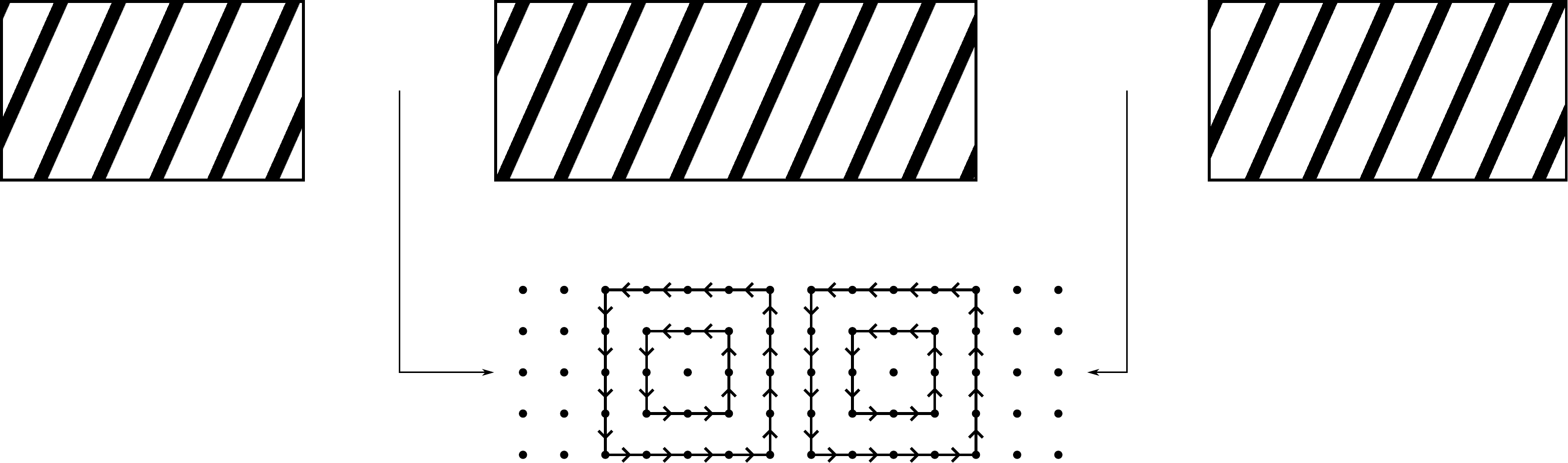}%
\caption{Illustration of how boxed jewels that cancel out can be brought close. The striped rectangles indicate areas where there may be other boxed jewels.}%
\label{fig:boxedJewelsCancellation}%
\end{figure}
\begin{figure}[ht]%
\centering
\includegraphics[width=0.6\columnwidth]{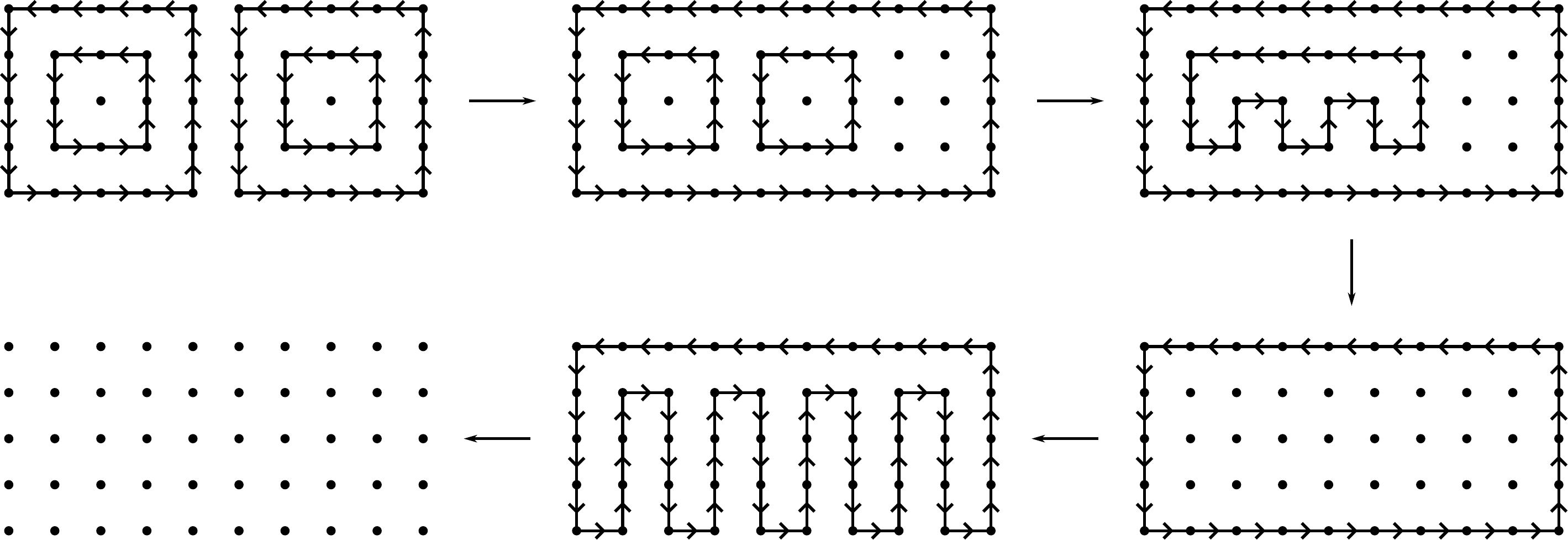}%
\caption{Some intermediate steps in the flip homotopy that ``dissolves'' a pair of cancelling boxed jewels}%
\label{fig:boxedJewelsCancellationSteps}%
\end{figure}

In order to see this, start by moving the boxed jewels that cancel out downwards, then toward each other until they are next to each other, as illustrated in Figure \ref{fig:boxedJewelsCancellation}. Once they are next to each other, they can be elliminated by a sequence of flip moves. Figure \ref{fig:boxedJewelsCancellationSteps} shows some of the steps involved in the flip homotopy that eliminates this pair of cancelling jewels. 
 
\end{proof}

If $s$ is a sock, we define the \emph{area} of $s$ to be the sum of the areas enclosed by each cycle of $s$, thought of as a plane curve. The areas count as positive regardless of the orientation of the cycles. As an example, the boxed jewels shown in Figure \ref{fig:boxedJewelsExample} have areas $4$ and $120,$ respectively, and the untangled sock in Figure \ref{fig:untangledSockExample} has area $20 + 4 + 20 + 56 = 100$. Naturally, the only sock with zero area is the empty sock (the sock with no cycles).

The following Lemma is the key step in the proof of Proposition \ref{prop:moreSpace}:

\begin{lemma}
\label{lemma:homotopicToUntangled}
Every sock is flip homotopic to an untangled sock in $\ZZ^2$.
\end{lemma}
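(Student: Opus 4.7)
The plan is to proceed by induction on the area of the sock, with the base case being the empty sock (area zero), which is vacuously untangled. For the inductive step, I would reduce an arbitrary sock $s$ to a union of boxed jewels, and then invoke the free-space moving technique from Figure \ref{fig:boxedJewelsMove} to translate each boxed jewel to a position of the form $(n,0)$ on the $x$-axis, producing an untangled sock.

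The reduction to boxed jewels proceeds from inside out. I would pick an innermost cycle $\gamma$ of $s$ — one enclosing only jewels, no other cycles — and analyse its size. If $\gamma$ is a $2 \times 2$ square enclosing a single jewel, it is already a boxed jewel. If $\gamma$ encloses no jewel at all, then after shrinking $\gamma$ to a $2 \times 2$ square by flip moves of type (a), a single type (c) flip eliminates it. Otherwise $\gamma$ is a simple lattice cycle of area at least $6$, and I would argue that every such cycle admits either a convex corner where a flip move of type (a) strictly reduces its enclosed area, or a pair of parallel non-adjacent edges in adjacent rows or columns where a flip move of type (b) splits $\gamma$ into two smaller cycles of the same orientation enclosing a partition of the original jewels. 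In either case the total area is strictly reduced while preserving the enclosed jewel set (and hence the graph invariant $P_s$), so induction finishes the reduction of $\gamma$ to a disjoint union of boxed jewels. Once all innermost cycles have been reduced, the next layer becomes innermost — with the freshly created boxed jewels treated as fixed ``jewels'' for the purposes of further reductions — and the argument repeats until every cycle of $s$ is a boxed jewel.

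The main obstacle I expect is the geometric claim that every non-minimal simple lattice cycle admits a local shrinking or splitting flip move. This requires a careful case analysis of local configurations near an extremal edge of $\gamma$, and occasionally one must first perform auxiliary flip moves nearby — for instance, using the boxed-jewel moving technique to push away structure that blocks the desired reduction, or temporarily expanding an outer cycle to create working room between nested layers. The bookkeeping needed to guarantee that these auxiliary moves never permanently interfere with the remaining cycles is the main technical subtlety; once it is in place, the induction on area closes and the lemma follows by combining the reduction to boxed jewels with the $x$-axis migration.
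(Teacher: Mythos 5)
There is a genuine gap, and it sits exactly where you deferred ``the main technical subtlety.'' Your reduction step claims that any innermost cycle of more than minimal area admits a flip move of type (a) or (b) that strictly decreases area ``while preserving the enclosed jewel set.'' But flip moves preserve the invariant $P_s$, hence the winding number around every jewel; a move that shrinks a cycle past an enclosed jewel is a trit, not a flip (compare case (4) of Figure \ref{fig:tritAndFlipHomotopy_cases} versus the flip cases). Consequently enclosed jewels genuinely obstruct shrinking. Concretely, take the cycle that tightly encloses two diagonally adjacent jewels, say at $(0,0)$ and $(1,1)$: for a vertex to be strictly enclosed, all four unit cells around it must lie inside the cycle, so any cycle enclosing both jewels has area at least $7$, and the tight one attains $7$. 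No area-reducing flip move applies to it, and no type (b) split can produce two disjoint cycles each enclosing one of the two jewels (the two polyominoes would have to share the cell between the jewels, forcing nesting and changing a winding number). Untangling this sock requires passing through socks of area at least $8$, so both your local claim and your global induction on total area fail: area cannot be made monotonically decreasing, and ``preserving the enclosed jewel set'' is neither achievable nor needed (the invariant only remembers colors and winding numbers, not positions).

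The paper's proof is built precisely to get around this. It takes a minimal-area sock $s_0$ not flip homotopic to an untangled one, picks the rightmost bottommost vertex $v$ (so there is guaranteed free space below), and walks the northwest diagonal from $v$ to the first irregular vertex $w$. If $w$ is not a jewel, one flip move reduces the area of $s_0$ itself, contradicting minimality. If $w$ is a jewel --- your blocked case --- it is \emph{extracted} through all the enclosing cycles as a boxed jewel (Figure \ref{fig:jewelExtraction}); this makes the total area go up, but the minimality hypothesis is then applied only to the sock \emph{minus} the extracted boxed jewel, which does have smaller area. The extracted boxed jewel is parked far below $v$, at a depth $M$ chosen after inspecting the untangling homotopy of the remainder, so that the two never interfere, and is brought back at the end. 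To repair your argument you would need both this extraction mechanism (or an equivalent way to move a cycle strand past a jewel by flips alone, which the diagonal alignment at $v$ provides) and a different induction quantity or the minimal-counterexample formulation, since induction on the area of the whole sock does not close.
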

\begin{proof}
Suppose, by contradiction, that there exists a sock which is not flip homotopic to an untangled sock. Of all the examples of such socks, pick one, $s_0$, that has minimal area (which is greater than zero, because the empty sock is already untangled).

Among all the cycles in $s_0$, consider the ones who have vertices that are furthest bottom. Among all these vertices, pick the rightmost one, which we will call $v$. In other words, assuming the axis are as in Figure \ref{fig:notation2Dexample}: if $m = \max \{n : (k,n) \mbox{ is a vertex of } s_0 \mbox{ for some } k \in \ZZ\}$ and $l = \max \{k : (k,m) \mbox{ is a vertex of } s_0\}$, then $v = (l,m)$. 

Clearly $v$ is the right end of a horizontal edge, and the bottom end of a vertical edge, as portrayed in Figure \ref{fig:rightmostSouthmostPoint}. We may assume without loss of generality that these edges are oriented as in the aforementioned Figure.    
\begin{figure}[ht]%
\centering
\includegraphics[width=0.25\columnwidth]{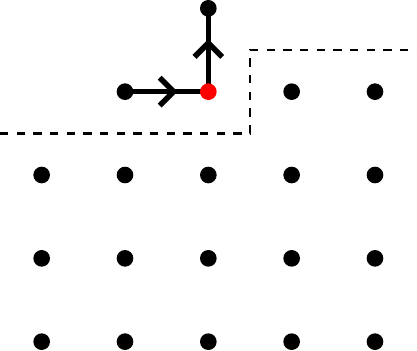}%
\caption{The red vertex indicates the rightmost bottommost vertex in the nonempty sock $s_0$. By definition, there can be no cycle parts in $s_0$ below the dotted line.}%
\label{fig:rightmostSouthmostPoint}%
\end{figure}

Consider the diagonal of the form $v - (n,n), n \geq 0, n \in \ZZ$, starting from $v$ and pointing northwest, and let $w$ be the first point (that is, the one with the smallest $n$) in this diagonal that is not the right end of a horizontal edge pointing to the right and the bottom end of a vertical edge pointing up (see Figure \ref{fig:firstInTheDiagonal}).

\begin{figure}[ht]%
\centering
\includegraphics[width=0.4\columnwidth]{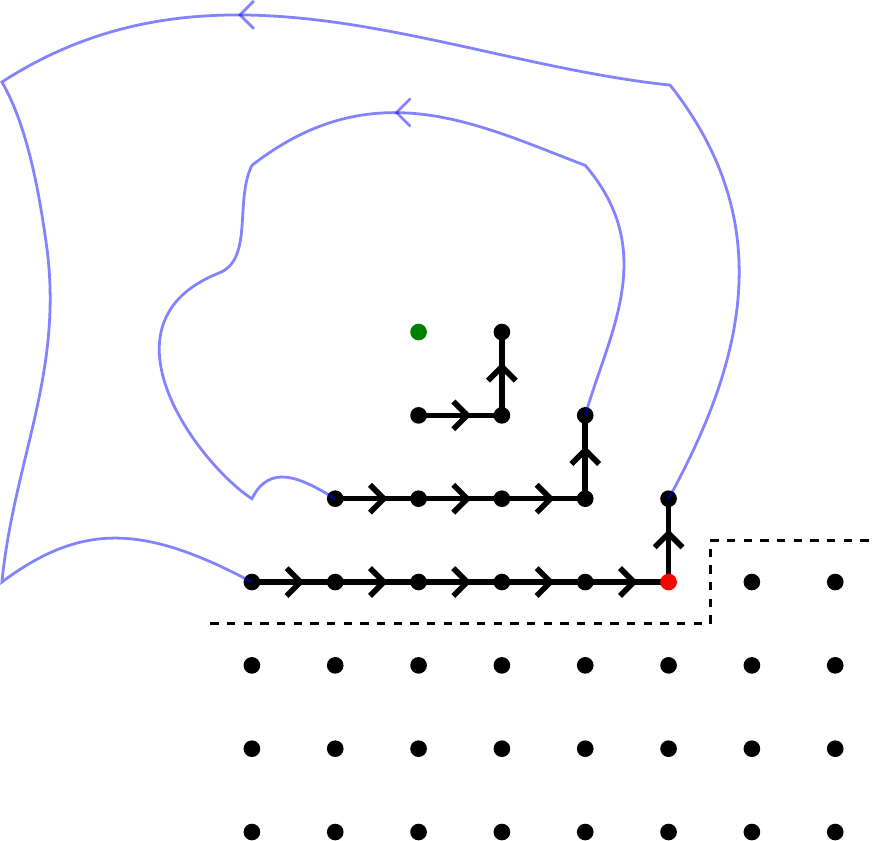}%
\caption{The red vertex represents $v$, while the green one represents $w$, which is the first vertex in the diagonal that does not follow the pattern (``edgewise'') of the other three. The blue segments represent (schematically) the relative positions of two of the cycles, which must be in this way because there can be no cycle parts below the dashed line. }%
\label{fig:firstInTheDiagonal}%
\end{figure}

We will now see that if $w$ is not a jewel, then we can immediately reduce the area of $s_0$ with a single flip move. This leads to a contradiction, because the sock obtained after this flip move cannot be flip homotopic to an untangled sock, but has smaller area than $s_0$.

Suppose $w$ is not a jewel. With Figure \ref{fig:firstInTheDiagonal} in mind, consider the vertex directly below $w$. It is either the bottom end of a vertical edge pointing downward (case 1), or the right end of a horizontal edge pointing to the right. In case 1, a single flip move of type (a) will immediately reduce the area, as shown in the first drawing in Figure \ref{fig:firstInTheDiagonal_notJewel}. The other three drawings handle all the possibilities for the other case: notice that in each case there is a flip move that reduces the area.

\begin{figure}[ht]%
\centering
\includegraphics[width=0.7\columnwidth]{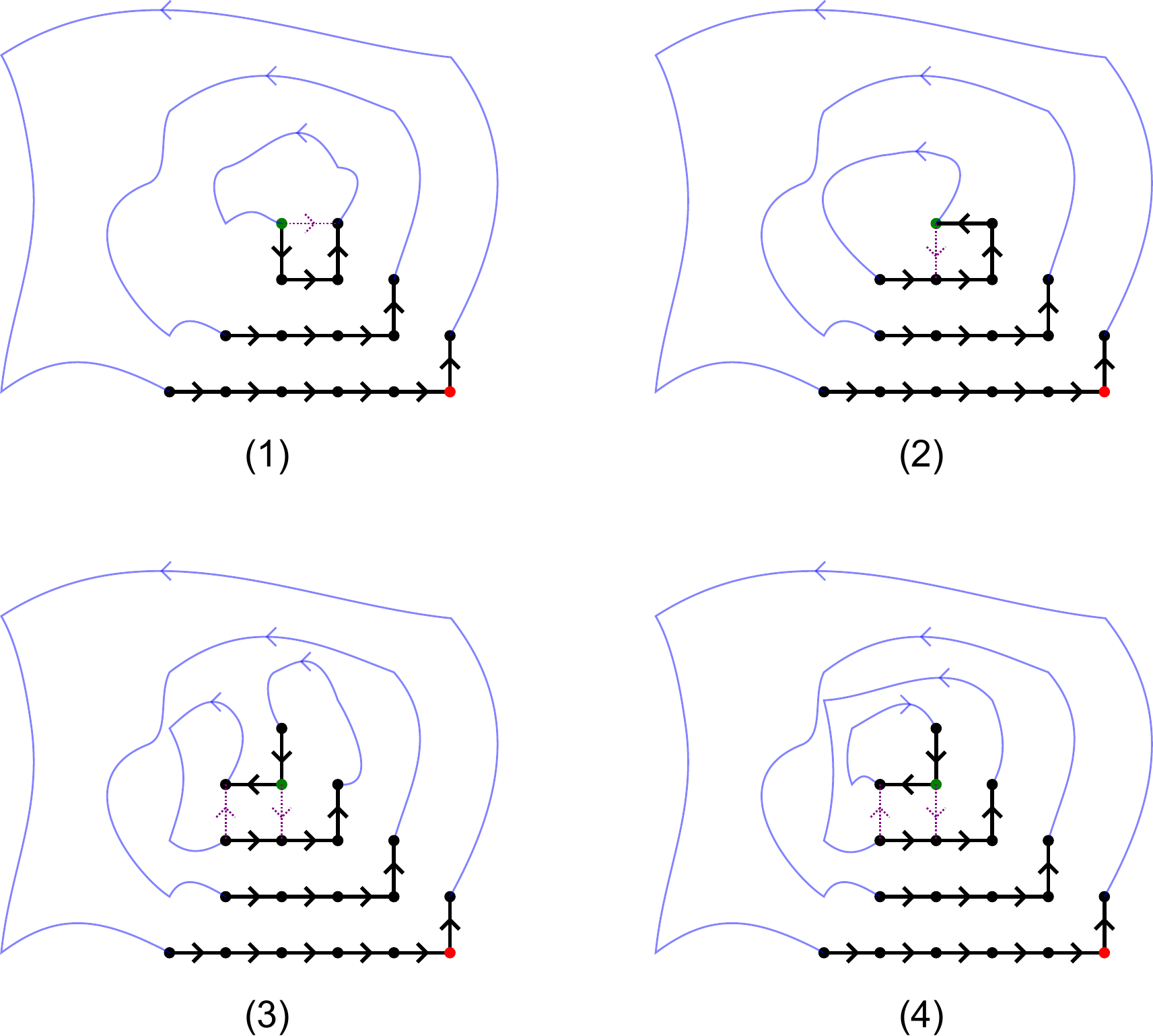}%
\caption{The four possibilities when $w$ is not a jewel. Notice that, in each case, there is a flip move that reduces the area, which are indicated by the dotted purple lines.}%
\label{fig:firstInTheDiagonal_notJewel}%
\end{figure} 

The most interesting case is when $w$ is a jewel. The key observation here is that the jewel may be then ``extracted'' from all the cycles as a boxed jewel, and what remains has smaller area. Figure \ref{fig:jewelExtraction} illustrates the steps involved in extracting a jewel.

\begin{figure}[ht]%
\centering
\includegraphics[width=\columnwidth]{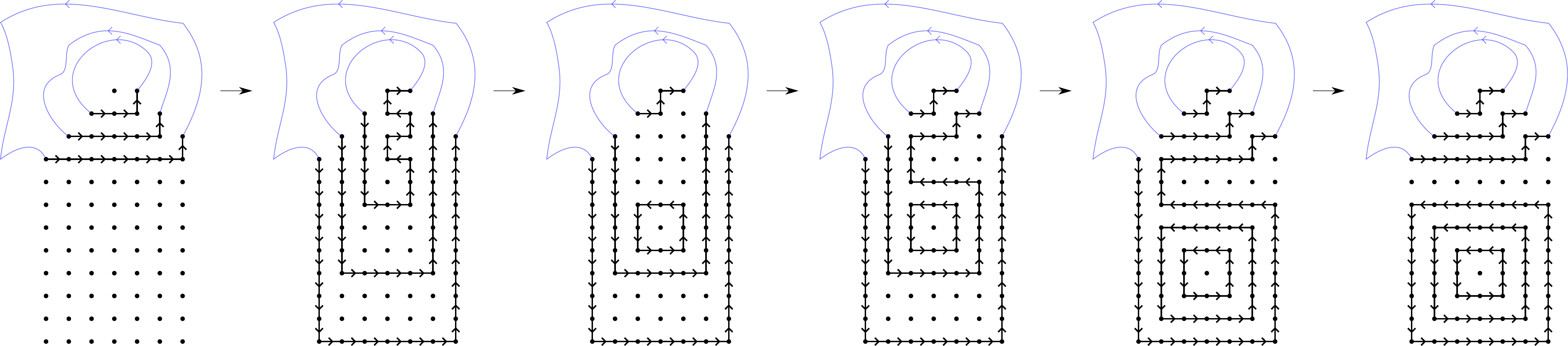}%
\caption{Some intermediate steps in the extraction of a jewel. Notice that this flip homotopy reduced the area of the cycles that previously enclosed the jewel.}%
\label{fig:jewelExtraction}%
\end{figure}

Suppose $w$ is a jewel, and consider the flip homotopy that extracts $w$ (as in Figure \ref{fig:jewelExtraction}). Let $s_1$ be the sock obtained at the end of this flip homotopy, and let $\tilde{s_1} = \tilde{s_2}$ be the sock consisting of all the cycles of $s_1$ except the extracted boxed jewel. Clearly the area of $\tilde{s_2}$ is less than the area of $s_0$, and since $s_0$ is a sock that has minimal area among those that are not flip homotopic to an untangled sock, it follows that there exists a flip homotopy, say  $\tilde{s_2},\tilde{s_3},\ldots, \tilde{s_n}$ such that $\tilde{s_n}$ is an untangled sock.

Let $M = \max \{k : (l,k) \mbox{ is a vertex of } \tilde{s_i} \mbox{ for some } k \in \ZZ, 2 \leq i \leq n\}$. Recall that boxed jewels can move an arbitrary even distance if they have free space in some direction; by the definition of $v$, there can be no cycles or cycle parts below $v$ in $s_0$, so we can define $s_2$ (homotopic to $s_1$) by pulling the extracted jewel in $s_1$ down as much as we need, namely so that all vertices of the boxed jewel have $y$ coordinate strictly larger than $M$. Notice that $\tilde{s_2}$ is obtained from $s_2$ by removing this boxed jewel. Clearly we can perform on $s_2$ all the flip moves that took $\tilde{s_2}$ to $\tilde{s_n}$, so that we obtain a flip homotopy $s_2, s_3, \ldots, s_n$, where $s_n$ consists of all the cycles in $\tilde{s_n}$ plus a single boxed jewel down below. The other cycles in $s_n$ are also boxed jewels whose centers have $y$ coordinate equal to zero, because $\tilde{s_n}$ is untangled; therefore, the boxed jewel down below can be brought up and sideways as needed, thus obtaining a flip homotopy from $s_0$ to an untangled sock $s_{n+1}$. This contradicts the initial hypothesis, and thus the proof is complete.     
\end{proof}

This proof also yields an algorithm for finding the flip homotopy from an arbitrary sock to an untangled sock, although probably not a very efficient one. Start with the initial sock, and find $v$ and $w$ as in the proof. If $w$ is not a jewel, perform the flip move that reduces the area, and recursively untangle this new sock. If $w$ is a jewel, extract the boxed jewel, recursively untangle the sock without the boxed jewel (which has smaller area), and calculate how much space you needed to solve it: this will tell how far down the boxed jewel needs to be pulled. The algorithm stops recursing when we reach a sock with zero area: all that's left to do then is to ``organize'' the boxed jewels.

Together, Lemmas \ref{lemma:equivEmbeddingHomotopy}, \ref{lemma:untangledSocksInvariant} and \ref{lemma:homotopicToUntangled} establish Proposition \ref{prop:moreSpace}, which was the goal for this section. As a conclusion, the polynomial invariant here presented is, in a sense, complete: if two tilings have the same invariant and sufficient space is added to the region, then there is a sequence of flips taking one to the other. 
  
\section{Connectivity by flips and trits}
\label{sec:connectedFlipsTrits}
We introduced in this paper a new move, which we called a trit, that we defined in Section \ref{sec:defsAndNotations} (see Figure \ref{fig:postrit}). Also, in Section \ref{sec:effectOfTritsOnPt} we studied the effect of a trit in our invariant $P_t$.

We also pointed out that the graph of connected components for Examples \ref{example:732box} and \ref{example:unequalFloors}, shown in Figures \ref{fig:box732_CCdiagram} and \ref{fig:bigGraph_CC_unequal}, is connected in both cases. There, two components are joined if there exists a trit taking a tiling in one component to a tiling in the other. Hence, this graph is connected for a region if and only if for any two tilings of this region, one can reach one from the other via flips and trits.

A natural question is, therefore: for regions with two simply connected floors, is it true that one can always reach a tiling from any other via flips and trits? The answer is, in general, no, as Figure \ref{fig:unequal_twofloors_notConnectedFlipsTrits} shows. 

\begin{figure}%
\centering
\subfloat[]{\includegraphics[width=0.21\columnwidth]{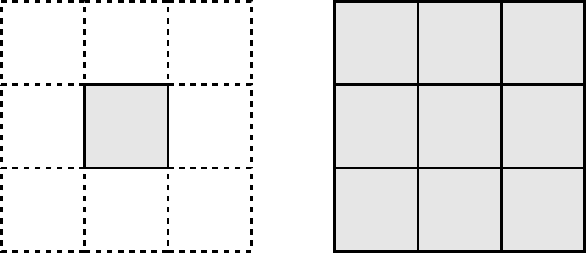}}\qquad
\subfloat[]{\includegraphics[width=0.33\columnwidth]{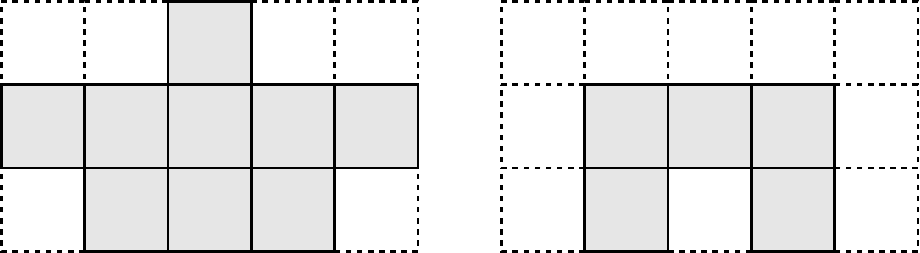}}
\caption{Two regions, each with two tilings where neither a flip nor a trit is possible.}%
\label{fig:unequal_twofloors_notConnectedFlipsTrits}%
\end{figure} 

Nevertheless, this is true for duplex regions:
\begin{prop}
\label{prop:connectivityFlipsTrits}
If $R$ is a duplex region and $t_0,t_1$ are two tilings of $R$, there exists a sequence of flips and trits taking $t_0$ to $t_1$.
\end{prop}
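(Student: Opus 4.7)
My plan is to prove the stronger canonical-form statement that every tiling $t$ of a duplex region $R = D \times [0,2]$ is connected by a sequence of flips and trits to the unique ``all-$z$'' tiling $t_\ast$, whose associated drawing has a jewel at every square of $D$. Once this is established, concatenating a flip-trit path $t_0 \to t_\ast$ with the reverse of a flip-trit path $t_1 \to t_\ast$ produces the desired sequence from $t_0$ to $t_1$.

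I would reduce an arbitrary tiling $t$ to $t_\ast$ by strong induction on a complexity measure $A(t)$, defined as the total area (as plane curves) enclosed by the non-trivial cycles in the associated drawing of $t$. In the base case $A(t)=0$, every cycle of the associated drawing is trivial (length two), and so consists of two parallel non-$z$ dimers aligned in both floors. Each such trivial cycle is converted into a pair of adjacent jewels by a single Case \ref{case:flipzdimer} flip, and iterating over all trivial cycles yields $t_\ast$ using flips alone.

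For the inductive step $A(t)>0$, the goal is to exhibit one flip or trit whose result $t'$ satisfies $A(t')<A(t)$, so that the inductive hypothesis finishes the argument. Here I would adapt the case analysis from the proof of Lemma \ref{lemma:homotopicToUntangled}: select an outermost non-trivial cycle $\gamma$, let $v$ be its bottommost-rightmost vertex, and follow the northwest diagonal from $v$ until the first vertex $w$ that is not a convex corner with the edge configuration of Figure \ref{fig:firstInTheDiagonal}. If $w$ is not a jewel, one of the flip moves in Figure \ref{fig:firstInTheDiagonal_notJewel} translates directly into a flip on $t$ that strictly decreases $A$. If $w$ \emph{is} a jewel, then the $2\times 2\times 2$ sub-block of $R$ sitting at the corresponding corner automatically contains a $z$-dimer (the jewel $w$ itself) together with two perpendicular non-$z$ dimers arising from the two edges of $\gamma$ meeting at that corner; this is exactly the configuration required to perform a trit, and the trit pulls $w$ across an edge of $\gamma$, thereby reducing $A$.

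The hard part will be the last case: I must check by a careful geometric analysis that the chosen $2 \times 2 \times 2$ block genuinely lies inside $R$ (which should hold because $R$ is a duplex region and the two dimers forming the corner are already present in $t$) and that the trit actually reduces the area enclosed by $\gamma$ rather than merely reshuffling cycles. In essence, the trit plays here the role of the ``boxed jewel extraction'' of Lemma \ref{lemma:homotopicToUntangled}, replacing the need for the unbounded ambient space of $\ZZ^2$ with a strictly local three-dimensional move that fits inside $R$. Once this key lemma is proved, the induction closes and every tiling is reduced to $t_\ast$, completing the proof.
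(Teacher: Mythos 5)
Your overall strategy---reduce every tiling to the all-$z$ tiling by induction on enclosed area, finding at each step a single flip or trit that strictly decreases the area, with the trit playing the role that boxed-jewel extraction plays in Lemma \ref{lemma:homotopicToUntangled}---is exactly the idea of the paper's proof (Lemma \ref{lemma:flipAndTritHomotopy}: every sock in $G(R)$ is flip- and trit-homotopic to the empty sock). But your inductive step, as set up, has a real gap. You anchor the diagonal walk of Lemma \ref{lemma:homotopicToUntangled} at the bottommost-rightmost vertex $v$ of an \emph{outermost} nontrivial cycle $\gamma$. The case analysis of that lemma (Figures \ref{fig:firstInTheDiagonal} and \ref{fig:firstInTheDiagonal_notJewel}) relies on there being no cycle parts below the horizontal line through $v$, which there is guaranteed by taking $v$ extremal in the \emph{whole} sock; with $v$ extremal only for $\gamma$, other cycles may lie below or to the right, and you have not shown that your cases (the four flip configurations plus ``$w$ is a jewel'') are exhaustive, nor that each corner met along the walk has its adjacent unit square enclosed by its cycle---which is precisely what makes the trit \emph{decrease} the area rather than increase it. The paper sidesteps the whole long-diagonal analysis by taking an \emph{innermost} nontrivial cycle $\gamma$ and only the single step $w=v-(1,1)$: then $w$ lies inside $\gamma$, hence $w\in G(R)$ because the floor is simply connected (otherwise $\gamma$ would enclose a hole), and $w$ is either a jewel (trit, case (4) of Figure \ref{fig:tritAndFlipHomotopy_cases}) or a vertex of $\gamma$ itself (three flip cases); in every case the area drops by one, and the minimal-area (equivalently, inductive) argument closes.

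Your announced ``hard part'' is also aimed at the wrong difficulty. You do not need the full $2\times2\times2$ block to lie in $R$: by the definition of the trit (Figure \ref{fig:postrit}) the two cubes not covered by the three dimers may lie outside the region, and the three dimers involved---the jewel, plus the horizontal and vertical corner edges, which lie in different floors because consecutive edges of a cycle alternate floors---are already present in the tiling, so the trit is legal whenever that configuration appears in the drawing. What actually needs proof is (a) that the terminal vertex $w$ of your walk belongs to $G(R)$ at all, which is where simple connectivity of the floor must be invoked, and (b) the enclosure/orientation statement guaranteeing the area decreases. With the innermost-cycle choice both are immediate; with your outermost-cycle, long-walk variant they are not established. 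Once the area-reduction step is repaired, concatenating the two reductions through the all-$z$ tiling gives the proposition exactly as you describe.
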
 

In order to prove this result, we'll once again make use of the concept of systems of cycles, or socks, that were introduced in Section \ref{sec:twoFloorsMoreSpace}.

Understanding the effect of a trit on a sock turns out to be quite easy: in fact, the effect of a trit can be captured to the world of socks via the insertion of a new move, which we will call the \emph{trit move} (in addition to the three flip moves shown in Figure \ref{fig:flipsteps}). The trit move is shown in Figure \ref{fig:tritstep}.

\begin{figure}[ht]%
\centering
\includegraphics[width=0.3\columnwidth]{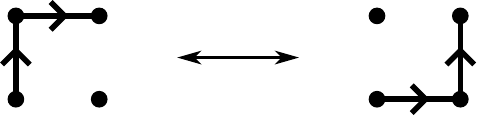}%
\caption{The trit move.}%
\label{fig:tritstep}%
\end{figure}

Another way to look at trit moves is that it either pulls a jewel out of a cycle or pushes one into a cycle. Two socks $s_1$ and $s_2$ are \emph{flip and trit homotopic} in a graph $G$ (which contains both $s_1$ and $s_2$) if there is a finite sequence of flip and/or trit moves taking $s_1$ to $s_2$. Notice that, unlike in Section \ref{sec:twoFloorsMoreSpace}, where we were mainly interested in flip homotopies in $\ZZ^2$, we are now interested in flip and trit homotopies in the finite graphs $G(R)$. Recall from Section \ref{sec:twoFloorsMoreSpace} that if $R$ is a duplex region, $G(R)$ is the planar graph whose vertices are the centers of the squares in the associated drawing of $R$, and where two vertices are joined by an edge if their Euclidean distance is exactly $1$.

Again, for two tilings $t_0, t_1$, there exists a sequence of flips and trits taking $t_0$ to $t_1$ if and only if their corresponding socks are flip and trit homotopic in $G(R)$.

\begin{lemma} \label{lemma:flipAndTritHomotopy} If $R$ is duplex region, and $s$ is a sock in $G(R)$, then $s$ is flip and trit homotopic to the empty sock (the sock with no cycles) in $G(R)$.
\end{lemma}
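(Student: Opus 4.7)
The plan is to prove the lemma by strong induction on the area of $s$ (in the sense defined just before Lemma \ref{lemma:homotopicToUntangled}), using the fact that, unlike flips in $G(R)$ alone, the trit move is able to modify how many jewels lie inside a given cycle. The base case is trivial: an area-zero sock has no cycles and is already empty.

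For the inductive step, I would pick an innermost cycle $C$ of $s$. Since the floor of $R$ is simply connected and $C \subset G(R)$, every lattice point of $\ZZ^2$ in the interior of $C$ lies in $G(R)$; and since $C$ is innermost, each such interior vertex is a jewel of $s$. There are then two cases. In the first case, $C$ encloses no jewels. Then $C$ is a simple plane cycle containing no interior lattice points, which forces $C$ to be a length-$4$ cycle around a single unit square. In the tiling interpretation this is a $2 \times 2 \times 2$ block whose two floors carry perpendicular pairs of dimers, and it can be removed by a single in-floor flip of type (c) from Figure \ref{fig:flipsteps}. The resulting sock has strictly smaller area, and the inductive hypothesis applies.

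In the second (main) case $C$ encloses at least one jewel $j$. The idea is to use the flip moves of types (a) and (b) inside $C$ (available because the entire interior of $C$ lies in $G(R)$) to reshape $C$ so that $j$ becomes adjacent to $\partial C$, and then to apply a single trit move chosen so that $j$ is expelled from the interior of $C$. By Proposition \ref{prop:posTrit}, a positive trit on a black jewel and a negative trit on a white jewel each change the winding number of the surrounding cycle with respect to that jewel by exactly one, and in the sock picture the corresponding trit move of Figure \ref{fig:tritstep} locally deforms $C$ so as to exclude $j$ from its interior, strictly reducing the enclosed area by one. The sock thus obtained has smaller area and we finish by induction.

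The main obstacle is geometric: in Case $2$ one must justify that flip moves inside $C$ really do let us bring a chosen interior jewel into a position adjacent to $\partial C$ in a configuration where the trit move of Figure \ref{fig:tritstep} applies and strictly decreases the area. This is essentially a statement about plane systems of cycles in a simply connected planar region, and the simple connectedness of the floor of $R$ is exactly what guarantees that the portion of $G(R)$ enclosed by the innermost cycle $C$ is itself a simply connected planar piece with enough room for the flip homotopies used in the proof of Lemma \ref{lemma:homotopicToUntangled} (but restricted to this interior). Once this local move is set up correctly, the area strictly decreases and the induction goes through.
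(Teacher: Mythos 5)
Your overall skeleton (induction on area, pass to an innermost cycle, reduce area by a flip move or expel an enclosed jewel by a trit move) is the same as the paper's, but the step you yourself flag as ``the main obstacle'' is a genuine gap, not a routine verification. In Case~2 you never prove that flip moves inside $C$ can reshape $C$ so that the chosen jewel $j$ sits diagonally adjacent to a corner of $C$ in the configuration of Figure~\ref{fig:tritstep}; you only assert it and appeal to ``the flip homotopies used in the proof of Lemma~\ref{lemma:homotopicToUntangled} restricted to this interior.'' That machinery does not transfer: those homotopies work in $\ZZ^2$, where jewels are extracted as \emph{boxed jewels} (nested cycles, which would destroy innermost-ness and increase enclosed structure) and are then pulled arbitrarily far away through free space; inside the finite graph $G(R)$, and inside the interior of a single cycle $C$, neither the extraction nor the transport is available, so the reshaping claim is exactly what needs a new argument. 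There is also a smaller error in Case~1: a cycle enclosing no jewels (equivalently, no interior lattice points) need not be a unit square --- a $1\times k$ rectangle or a thin staircase-shaped cycle also has no interior vertices --- so the reduction there should be by flip moves of type (a)/(b) shortening a thin cycle, not by a single type (c) move.

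The paper's proof closes precisely this gap by avoiding any global deformation of $C$: take the innermost cycle $\gamma$, let $v$ be the rightmost among its bottommost vertices (so $v$ is the right end of a horizontal edge and the bottom end of a vertical edge, with nothing of $\gamma$ below it), and consider the single diagonal neighbor $w = v-(1,1)$, which lies in $G(R)$ because the simply connected floor forbids a hole inside $\gamma$. A short case analysis (Figure~\ref{fig:tritAndFlipHomotopy_cases}) shows that if $w$ is a vertex of $\gamma$ one of three flip moves reduces the area, and if $w$ is a jewel the trit move applies \emph{immediately} at this corner and reduces the area; minimality of the area then gives the contradiction. If you want to salvage your write-up, replace the unproved ``reshape $C$ until $j$ is adjacent to the boundary'' step by this local corner analysis (or supply an equally concrete argument for the reshaping), and repair Case~1 as indicated.
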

\begin{proof}
Suppose, by contradiction, that there exists a sock contained in $G(R)$ that is not flip and trit homotopic to the empty sock in $G(R)$. Of all the examples of such socks, let $s$ be one with minimal area (the concept of area of a sock was defined in Section \ref{sec:twoFloorsMoreSpace}). We will show that there exists either a flip move or a trit move that reduces the area of $s$, which is a contradiction.

Let $\gamma$ be a cycle of $s$ such that there is no other cycle inside $\gamma$: hence, if there is any vertex inside $\gamma$, it must be a jewel. Similar to the proof of Lemma \ref{lemma:homotopicToUntangled}, let $v$ be the rightmost among the bottommost vertices of $\gamma$ (notice that we are only considering the vertices of $\gamma$, and not all the vertices in $s$), or, in other words: if $m = \max \{n : (k,n) \mbox{ is a vertex of } \gamma \mbox{ for some } k \in \ZZ\}$ and $l = \max \{k : (k,m) \mbox{ is a vertex of } \gamma\}$, let $v = (l,m)$. Notice that $v$ is the right end of a horizontal edge and the bottom end of a vertical edge: we may assume without loss of generality that this horizontal edge points to the right. 

\begin{figure}[ht]%
\centering
\includegraphics[width=0.45\columnwidth]{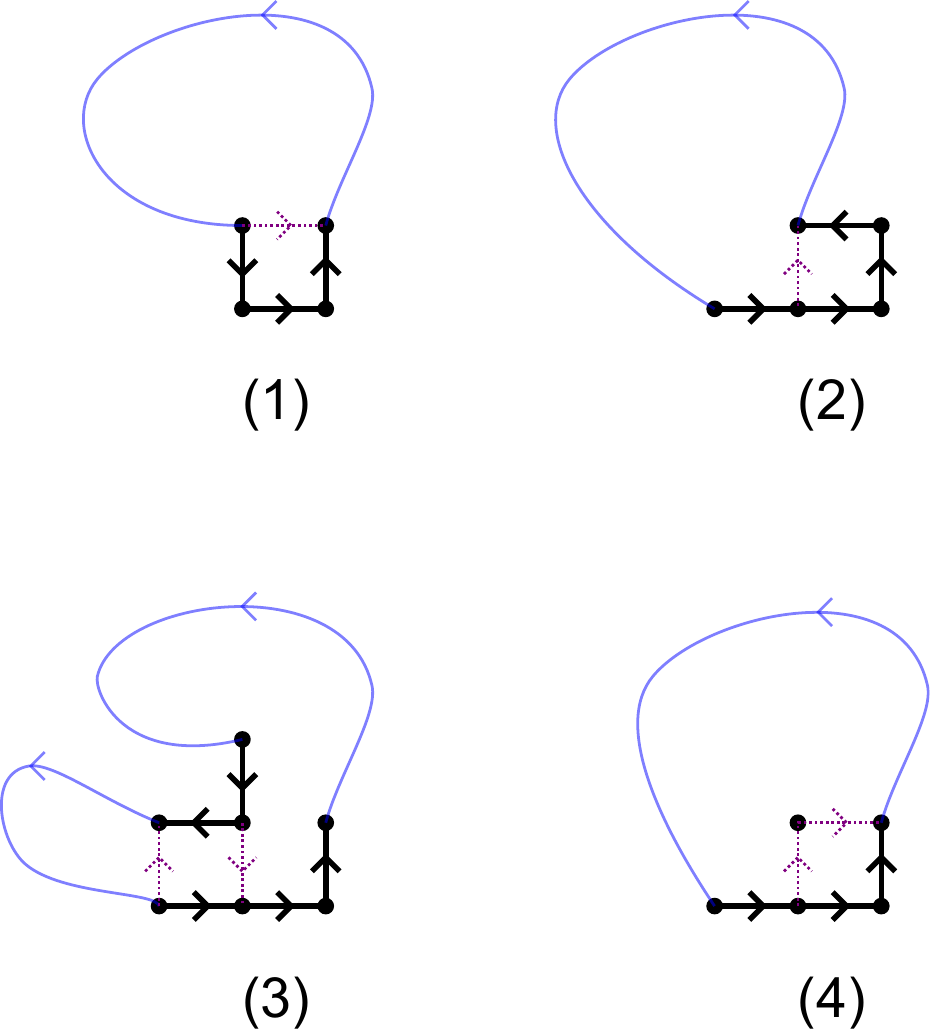}%
\caption{The four cases in the proof of Lemma \ref{lemma:flipAndTritHomotopy}, where the blue line indicates schematically the position of the edges in $\gamma$ that are not directly portrayed. The move that reduces the area is indicated by the dashed purple line: in the first three cases, it is a flip move; the last one is a trit move.}%
\label{fig:tritAndFlipHomotopy_cases}%
\end{figure}

Notice that the vertex $w = v - (1,1)$ is necessarily in the graph $G(R)$, because otherwise $\gamma$ would have a hole inside, which contradicts the hypothesis that the (identical) floors of $R$ are simply connected. Suppose first that $w$ is not a jewel. Since there are no cycles inside $\gamma$, it follows that $w$ must be a vertex of $\gamma$. If $w$ is the topmost end of a vertical edge poiting downward, we have the first case in Figure \ref{fig:tritAndFlipHomotopy_cases}, where we clearly have a flip move that reduces the area. If this is not the case, it follows, in particular, that $u = v - (2,0)$ must be in the graph $G(R)$: Cases (2) and (3) of Figure \ref{fig:tritAndFlipHomotopy_cases} show the two possibilities for the edges that are incident to $w$, and it is clear that there exists a flip move which reduces the area of $s$.

Finally, the case where $w$ is a jewel is shown in case (4) of Figure \ref{fig:tritAndFlipHomotopy_cases}: the available trit move clearly reduces the area. Hence, there is always a flip or trit move that reduces the area of the sock, which contradicts the minimality of the area of $s$.    
\end{proof}

Therefore, we have established Proposition \ref{prop:connectivityFlipsTrits}. Another observation is that the above proof also yields an algorithm for finding the flip and trit homotopy from a sock to the empty sock: while there is still some cycle in the sock, find one cycle that contains no other cycle. Then find $v$, as in the proof, and do the corresponding flip or trit move, depending on the case. Since each move reduces the area, it follows that we'll eventually reach the only sock with zero area, which is the empty sock.

\section{The invariant when more floors are added}
\label{sec:embedFourFloors}
In this section, we'll discuss the fact that the invariant $P_t(q)$ is not preserved when the tilings are embedded in ``big'' regions with more than two floors.

As in Section \ref{sec:twoFloorsMoreSpace}, we'll consider duplex regions. Recall from that section that we defined the embedding of such a tiling in a two-floored box $B$. Here, we'll extend this notion to embeddings in boxes with four floors in a rather straightforward manner.

If $t$ is a tiling of a duplex region $R$, and $B$ is a box with four floors such that $R$ is contained in the top two floors of $B$, then the embedding $\hat{t}$ of $t$ in $B$ is the tiling obtained by first embedding $t$ in the top two floors of $B$ (as in Section \ref{sec:twoFloorsMoreSpace}), and then filling the bottom two floors with ``jewels'' (that is, dimers connecting both floors). This is illustrated in Figure \ref{fig:embeddingFourFloorsExample}.

\begin{figure}[ht]%
\centering
\includegraphics[width=0.5\columnwidth]{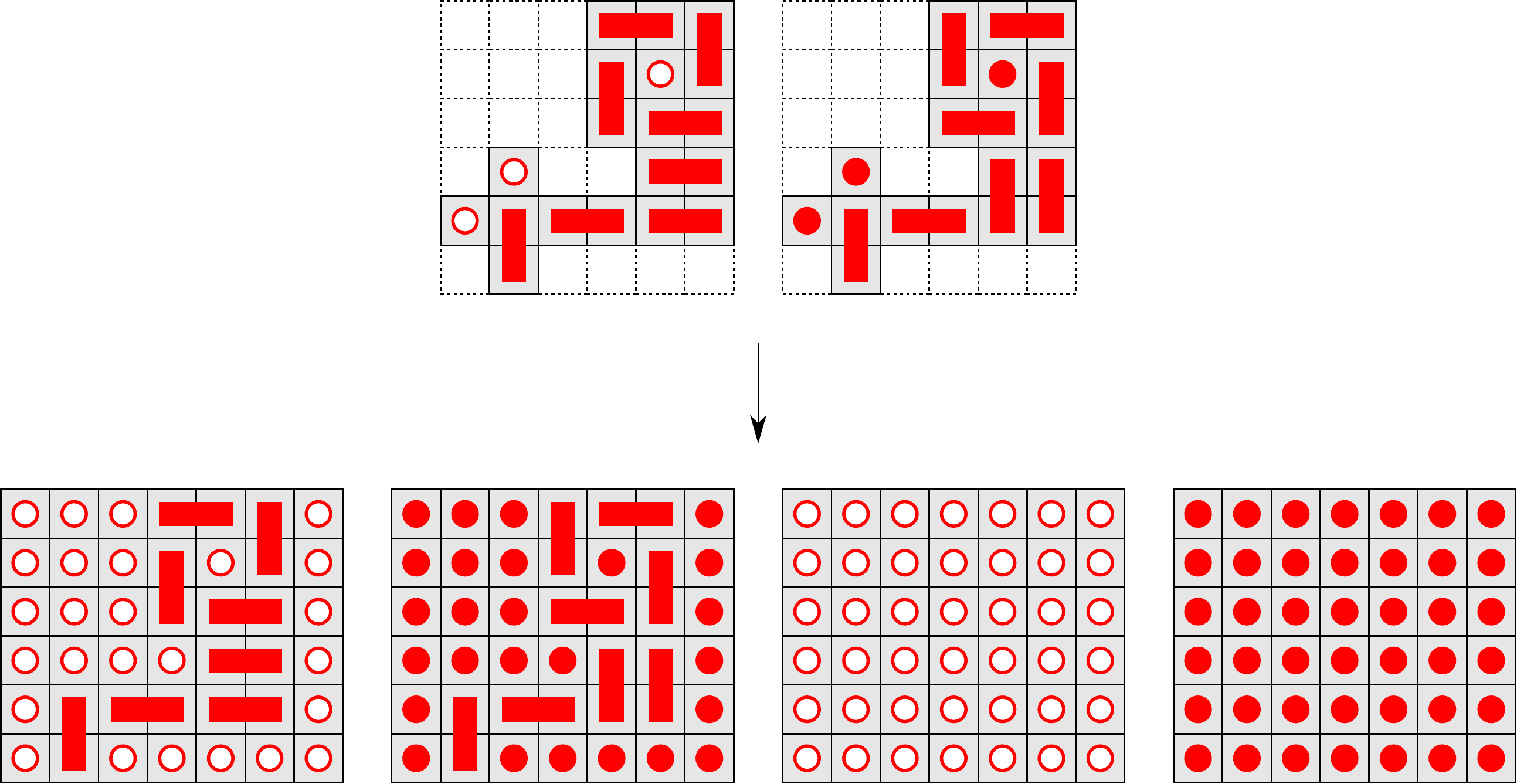}%
\caption{A tiling of a two-floored region and its embedding in a $7 \times 6 \times 4$ box.}%
\label{fig:embeddingFourFloorsExample}%
\end{figure}

\begin{prop}
\label{prop:embedFourFloors}
If $t_0$ and $t_1$ are two tilings of a duplex region and $P_{t_0}'(1) = P_{t_1}'(1)$, then there exists a box with four floors $B$ such that the embeddings of $t_0$ and $t_1$ in $B$ lie in the same flip connected component.
\end{prop}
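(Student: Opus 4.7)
The approach combines Proposition~\ref{prop:connectivityFlipsTrits} with the trit-invariance statement of Section~\ref{sec:effectOfTritsOnPt}. By Proposition~\ref{prop:connectivityFlipsTrits}, there is a sequence of flips and trits taking $t_0$ to $t_1$ inside $R$ itself. By Corollary~\ref{coro:tritstwofloors} together with the hypothesis $P_{t_0}'(1) = P_{t_1}'(1)$, this sequence contains exactly as many positive trits as negative ones. My strategy is to embed each intermediate tiling into a sufficiently large four-floor box $B$: each flip in $R$ lifts verbatim to a flip in $B$, but each trit must be handled separately. An individual trit cannot be replaced by flips alone (any such flip sequence would preserve an invariant the trit changes), but I will argue that every \emph{pair} consisting of one positive and one negative trit can be realized by a flip sequence in $B$. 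Pairing the trits off then yields a pure flip sequence from $\hat{t_0}$ to $\hat{t_1}$.

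The key step to prove is the following: if $t''$ is obtained from $t$ by performing a positive trit at some cube $A$ followed by a negative trit at some cube $A'$ in $R$, then for $B$ large enough the embeddings $\hat{t}$ and $\hat{t}''$ are flip-connected in $B$. The new tool is the bottom two floors, which in an embedding are tiled entirely by vertical dimers: unlike in a two-floor embedding, flips in $1 \times 2 \times 2$ slabs straddling the floor 2--3 boundary are available, and these ``bridging'' flips transfer dimers between the top and bottom halves. My plan for this key step has three stages. First, using bridging flips together with flips internal to the bottom two floors, prepare a local scratch structure directly below each trit location. Second, execute the intended positive trit in the top simultaneously with a compensating negative trit in the prepared scratch structure in the bottom, accomplished by a local flip sequence inside a $2 \times 2 \times 4$ sub-box with external dimers held fixed, and analogously for the second trit. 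Third, dismantle the scratch structures with bridging flips, returning the bottom two floors to their initial all-vertical configuration. Since the positive and negative ``bottom trits'' cancel by construction, the net change in the bottom is null, and the net change in the top is exactly the intended pair of trits.

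The main obstacle will be the explicit local flip construction in the second stage: verifying that a ``dual trit'' (positive trit in the top two floors plus compensating negative trit in the bottom two floors of a $2 \times 2 \times 4$ sub-box, with external dimers held fixed) can in fact be realized by flips. Since the $L \times 2 \times 2$ box is flip-connected (as mentioned in Section~\ref{sec:defsAndNotations}) and a $2 \times 2 \times 4$ sub-box is of this form up to relabeling of axes, the set of tilings with specified external boundary should likewise be flip-connected, reducing the dual-trit claim to a finite case check. A secondary issue is to ensure that when the trit locations $A$ and $A'$ are close together or coincide, the scratch structures in the bottom do not collide; this will be handled by enlarging $B$ and, where necessary, sliding scratch structures away from each other using the boxed-jewel manipulations developed in Lemmas~\ref{lemma:untangledSocksInvariant} and~\ref{lemma:homotopicToUntangled}.
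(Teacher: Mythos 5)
The decisive step of your plan --- that a ``dual trit'' (a trit in the top two floors together with a compensating opposite trit in a scratch structure below) can be realized by flips confined to a $2\times2\times4$ sub-box with the external dimers held fixed --- is not established, and the justification you offer for it does not work. Flip connectivity of $L\times2\times2$ boxes is a statement about tilings of the \emph{full} box; in your situation two cubes of the top $2\times2\times2$ and two cubes of the bottom $2\times2\times2$ are occupied by dimers protruding out of the sub-box (a trit configuration by definition leaves two opposite cubes to be covered from outside), so the region actually available to your local flips is the $2\times2\times4$ box with four holes. Flip connectivity of a box says nothing about such holed sub-regions with frozen boundary --- indeed the trit's own habitat, the $2\times2\times2$ cube minus two opposite corners, has exactly two tilings and no flips at all, and Figure \ref{fig:unequal_twofloors_notConnectedFlipsTrits} shows how badly connectivity fails once holes appear. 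Concretely, in the configuration you describe the twelve internally covered cubes admit at most one or two bridging flips across the floor 2--3 interface, and there is no verification (and no a priori reason) that the before- and after-configurations of the dual trit are flip connected while the four external dimers stay fixed. So the ``finite case check'' is not a formality: it is the whole content of the proposition at this local scale, and to carry it out one needs a wider staging area and an explicit cross-floor flip maneuver of exactly the kind the paper constructs (the jewel-transport and cancellation sequences of Figures \ref{fig:jewelGoesDown} and \ref{fig:jewelsCancel}), at which point you are essentially rebuilding the paper's argument rather than bypassing it.

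A secondary, more repairable gap is the final cleanup. The residues left in the bottom floors by compensating trits performed at different places and different times do not literally cancel ``by construction'': at the end the bottom two floors carry some configuration that merely has the \emph{same invariant} as the all-vertical tiling (and even this requires checking that each compensating trit changes the bottom invariant by exactly $\pm(q-1)$, i.e.\ that each scratch cycle is built outside all other bottom cycles). To return the bottom to all-vertical by flips you must invoke Proposition \ref{prop:moreSpace} (after enlarging $B$), not mere bookkeeping of signs. Your use of Proposition \ref{prop:connectivityFlipsTrits} and Corollary \ref{coro:tritstwofloors} to reduce to simulating equally many positive and negative trits is sound, and simulating each trit as it occurs (rather than pairing distant trits in an interleaved sequence) is the right way to phrase it; but without a genuine construction of the dual-trit flip sequence the proof does not go through.
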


The converse also holds, but the proof requires some technology from \cite{segundoartigo}. 

Proposition \ref{prop:embedFourFloors} is essentially stating that the invariant $P_t(q)$ no longer survives when more floors are added, although the twist $\Tw(t)$ still does; this fact was briefly mentioned before, and will be discussed in \cite{segundoartigo}.

Recall from Section \ref{sec:twoFloorsMoreSpace} the definitions of boxed jewel and sock. The key fact in the proof of Proposition \ref{prop:embedFourFloors} is that a boxed jewel associated to a $q^n$ term in $P_t(q)$ can be transformed into a number of smaller boxed jewels, their terms adding up to $nq$. In what follows, the \emph{sign} of a boxed jewel is the sign of its contribution to $P_t$ (i.e., $1$ if the jewel is black, and $-1$ if it is white) and its \emph{degree} is the number of cycles it contains, if all the cycles are counterclockwise; it is minus this number if all the cycles are clockwise.
In other words, if $t$ is a tiling of a two-floored region whose sock is untangled and $\{b_i\}_i$ is the set of boxed jewels in this sock, then 
$$P_t(q) = a_0 + \sum_i \sgn(b_i) q^{\deg(b_i)},$$
where $a_0 = P_t(1) - \sum_i \sgn(b_i)$, $\sgn(b_i)$ is the sign of $b_i$ and $\deg(b_i)$, its degree. 

\begin{lemma}
\label{lemma:jewelGoesDown}
If $t_0$ and $t_1$ are tilings of a two-floored region such that their associated socks are both untangled (consist only of boxed jewels) and:
\begin{enumerate}[label=(\roman*)]
	\item The associated sock of $t_0$ consists of a single boxed jewel of degree $n > 0$.
	\item The associated sock of $t_1$ consists of $n$ boxed jewels of degree $1$ and same sign as the boxed jewel in $t_0$.
\end{enumerate} 
Then there exists a box with four floors $B$ where their embeddings lie in the same connected components. 
\end{lemma}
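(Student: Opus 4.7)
The natural approach is induction on $n$. The base case $n = 1$ is immediate, since then $t_0 = t_1$. For the inductive step ($n \geq 2$), it suffices to exhibit a four-floor box $B$ in which the embedding of $t_0$ is flip-connected to the embedding of an intermediate tiling $t^\ast$ whose associated sock consists of a boxed jewel of degree $n-1$ together with a single boxed jewel of degree $1$ of the same sign; applying the inductive hypothesis to $t^\ast$ then yields a flip sequence from $t^\ast$ to $t_1$ in a possibly larger four-floor box.

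The crux is a ``peeling'' construction that decrements the degree of a boxed jewel by one at the cost of producing a new degree-$1$ boxed jewel. Such an operation is impossible within two floors alone because it changes $P_t(q)$ (from $\pm q^n$ to $\pm q^{n-1} \pm q$), but it does preserve the twist $\Tw(t) = P_t'(1) = \pm n$; one therefore expects it to be realizable by flips once sufficient vertical room is available. I would exploit the fact that in the four-floor embedding the bottom two floors are a reservoir of $z$-dimers, providing extra jewels that can be shuttled across the floor-$2$/floor-$3$ interface via flips in $2\times 2\times 1$ slabs straddling that interface. The rough scheme is: (a) perform cross-interface flips that convert stacked pairs of $z$-dimers (one in the top duplex-pair, one in the bottom duplex-pair) into $z$-dimers living between floors $2$ and $3$, opening a ``vertical channel'' just outside the boxed jewel; (b) use this channel to transfer the outermost cycle of the boxed jewel downward into the bottom duplex-pair, where it becomes a cycle in the bottom sock; (c) use ordinary duplex flips in the bottom pair to translate this cycle horizontally to a new location; (d) open a second channel at the new location and lift the cycle back up into the top duplex-pair so that it now encloses a previously-inert jewel, producing the required degree-$1$ boxed jewel; (e) close all channels by reversing the flips of stage (a), restoring the bottom two floors to their all-jewels configuration.

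The sign of the new degree-$1$ boxed jewel can be matched to that of the original because cycles of a fixed orientation always enclose jewels of a fixed color under the standard checkerboard coloring of $\ZZ^2$, and jewels of either color are available in abundance in the all-jewels bottom. The main obstacle is verifying that the stages (a)--(e) can actually be implemented by legitimate $2\times 2\times 1$ flips whose intermediate configurations are valid tilings of $B$; the bookkeeping required to ensure that the bottom two floors return exactly to the all-jewels state is especially delicate. The cleanest route to bypass an explicit flip-by-flip construction is probably to develop a four-floor analog of the sock formalism of Section \ref{sec:twoFloorsMoreSpace}, in which the peeling move appears as a single local operation, and then prove an analog of Lemma \ref{lemma:homotopicToUntangled} in that richer setting; the existence of the required flip sequence would then follow combinatorially rather than by direct construction.
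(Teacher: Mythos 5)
Your high-level strategy is the same as the paper's: induct on $n$, and realize the step $q^n \mapsto q^{n-1}+q$ by using the all-jewel bottom two floors as a reservoir, carrying part of the boxed jewel across the floor interface by flips, translating it horizontally below, and bringing it back up elsewhere before restoring the bottom floors. But the entire content of the lemma lies in the step you explicitly defer: exhibiting an actual sequence of $2\times2\times1$ flips, with valid intermediate tilings, that moves a cycle across the interface between the two duplex pairs. The paper does exactly this, by an explicit local maneuver (Figure \ref{fig:jewelGoesDown}: five stages of four flips each) that transports the \emph{innermost} degree-$1$ boxed jewel -- a bounded-size object -- down into the bottom pair; the vacated center is refilled by a $z$-dimer from below, so what remains on top is automatically a degree-$(n-1)$ boxed jewel, the transported unit moves freely below and resurfaces outside all cycles, and Proposition \ref{prop:moreSpace} plus induction finish the argument. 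Your choice to peel the \emph{outermost} cycle makes the unverified step strictly harder: that cycle is arbitrarily large, cannot be shrunk in the top pair (it encloses the rest of the boxed jewel), and pushing it down displaces all the reservoir jewels beneath its annular footprint -- precisely the bookkeeping you concede is ``especially delicate.'' Your fallback, a four-floor sock formalism with an analog of Lemma \ref{lemma:homotopicToUntangled}, is not developed here and is essentially the general machinery of \cite{segundoartigo}; as written, the proposal therefore has a genuine gap rather than a complete proof.

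A smaller but real error: the claim that ``cycles of a fixed orientation always enclose jewels of a fixed color'' is false -- a minimal cycle of either orientation can be realized around a jewel of either color (the orientation is determined by which alternate edges come from the bottom floor, not by the color of the enclosed vertex). Sign-matching is still achievable, since the transported unit already consists of the original jewel together with a cycle of the original orientation (this is another advantage of moving the inner jewel-plus-cycle, as the paper does, rather than a bare outer cycle that must be re-wrapped around a fresh jewel of the correct color), but your stated justification would need to be replaced.
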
 
\begin{proof}
\begin{figure}[ht]%
\centering
\includegraphics[width=0.4\columnwidth]{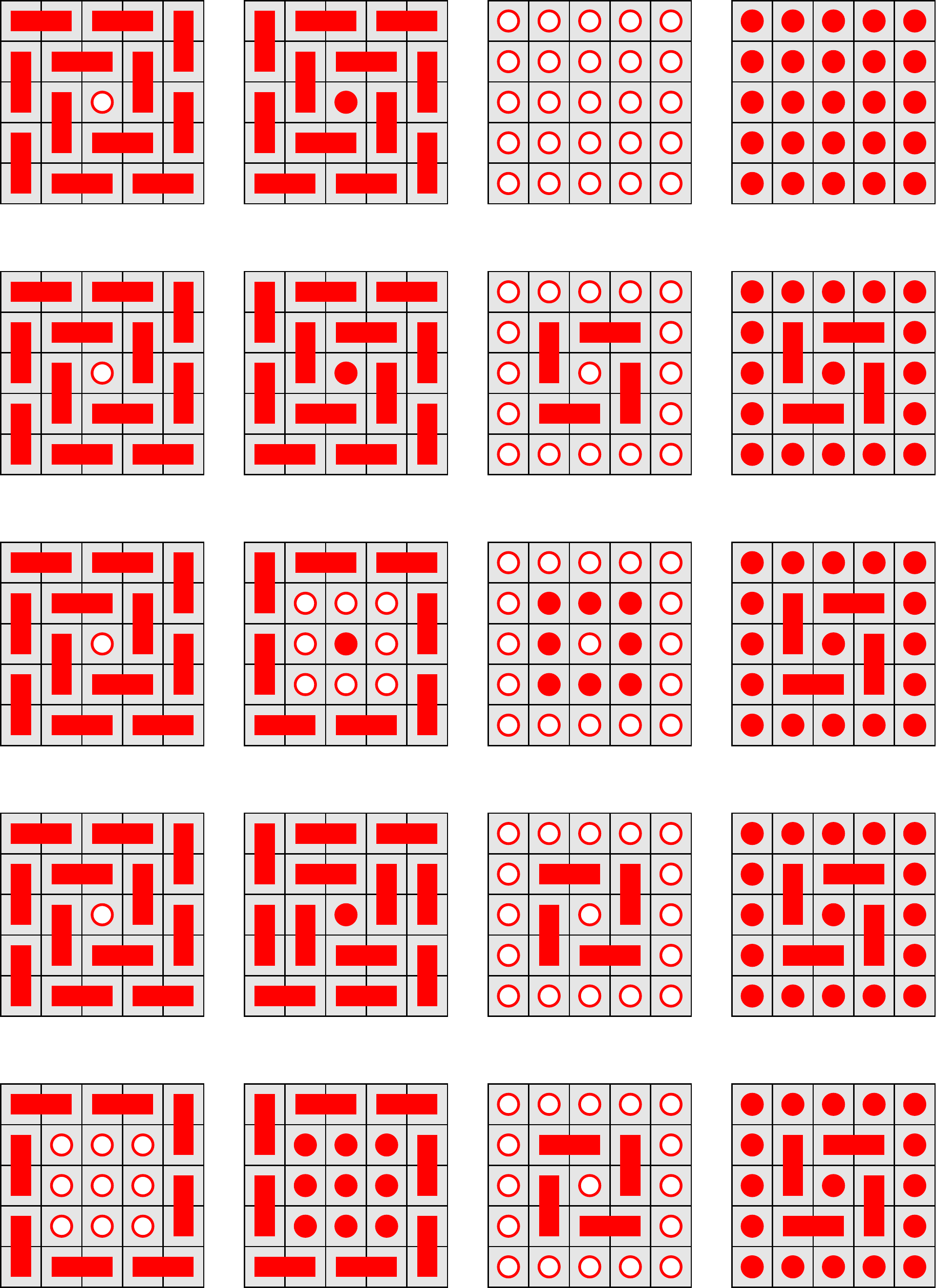}%
\caption{Five steps that bring a small boxed jewel from the top two floors to the bottom two floors, each consisting of four flips. In this case, a boxed jewel with degree $2$ in the top two floors is transformed into a boxed jewel with degree $1$ in the bottom two floors plus a cycle that can be easily flipped to a boxed jewel with degree $1$ in the top two floors. Since the bottom jewel can move freely in the bottom floors, it can be brought back up in a different position.}%
\label{fig:jewelGoesDown}%
\end{figure}

Figure \ref{fig:jewelGoesDown} illustrates the key step in the proof: that the innermost boxed jewel can be transported to the bottom two floors via flips. If the box $B$ is big enough, the innermost boxed jewel can freely move in the bottom two floors, and can eventually be brought back up outside of any other cycles. Since after this maneuver the bottom two floors are back as they originally were, the result of this maneuver is the embedding of a tiling whose sock is flip homotopic to an untangled sock with two boxed jewels, one with degree $n-1$ and another with degree $1$ (but both have the same sign as the original boxed jewel). Proceeding by induction and using Proposition \ref{prop:moreSpace}, we obtain the result.
\end{proof}

Therefore, boxed jewels with degree $n > 0$ (resp. degree $ -n < 0$) can be flipped into $n$ boxed jewels with degree $1$ (resp. $-1$). It only remains to see that boxed jewels with degrees $1$ and $-1$ and same sign cancel out.

\begin{lemma}
\label{lemma:jewelsCancel}
 If $t$ is a tiling of a duplex region whose sock is untangled and consists of two boxed jewels with degrees $1$ and $-1$ but same sign, then there exists a box with four floors where the embedding of $t$ is in the same flip connected component as the tiling consisting only of ``jewels'', that is, the tiling containing only $z$ dimers.
\end{lemma}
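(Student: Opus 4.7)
The plan is to adapt the four-floor manoeuvre of Lemma \ref{lemma:jewelGoesDown} so as to annihilate, rather than merely split, a pair of cycles of opposite degree in the top two floors, by routing them into the bottom two floors where a pair of cycles with vanishing $P_t$-contribution can then be eliminated via the two-story connectivity of Proposition \ref{prop:moreSpace}.

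First, I use Proposition \ref{prop:moreSpace} applied to the top two floors to enlarge $R$ until the two boxed jewels sit in convenient adjacent positions inside the top two floors of a large four-floor box $B$, and I extend the tiling by $z$ dimers everywhere else (in particular throughout the bottom two floors). This reduction is legitimate because any sequence of flips in a larger two-story box where the enlarged embeddings agree lifts to the same sequence of flips in $B$.

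Next, adapting the five-step flip sequence of Figure \ref{fig:jewelGoesDown}, I transport the CCW $4$-cycle of the degree $+1$ boxed jewel down through the interface between floors $1$ and $2$ into the bottom two floors, and by the analogous flip sequence I transport the CW $4$-cycle of the degree $-1$ boxed jewel down as well. I choose the positions into which the cycles descend so that in the resulting sock of the bottom two floors the two cycles enclose jewels whose signed contributions cancel; since the two cycles have opposite orientations, the total polynomial invariant of the bottom sock is then zero. At this point the top two floors have both former centres as bare jewels, with no other cycles, and the bottom two floors contain the two transported cycles on top of otherwise bare jewels.

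Since the bottom two floors form a duplex region and the sock obtained there has trivial invariant, a second application of Proposition \ref{prop:moreSpace} (possibly enlarging $B$ further) shows that this sock is flip-homotopic to the empty sock. Performing the corresponding flips in $B$ restores the bottom to all $z$ dimers, and the final tiling of $B$ consists entirely of $z$ dimers, as required.

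The main obstacle is the cycle-transport step: one must write down an explicit local flip sequence analogous to Figure \ref{fig:jewelGoesDown} that delivers the two cycles into the bottom pair with the colour-balanced enclosure that makes the bottom invariant vanish. As in Lemma \ref{lemma:jewelGoesDown}, this is a mechanical local verification, and the freedom guaranteed by Proposition \ref{prop:moreSpace} is what allows the target positions to be chosen as needed.
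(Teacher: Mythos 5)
Your plan fails at its central claim, namely that after both cycles have been carried into the bottom two floors ``the total polynomial invariant of the bottom sock is then zero.'' The hypothesis of the lemma is that the two boxed jewels have the \emph{same} sign (enclosed jewels of the same colour) and degrees $+1$ and $-1$ (opposite orientations), so their contributions are $q$ and $q^{-1}$ up to a common sign. Two contributions $c_1q^{k_1}$ and $c_2q^{k_2}$ cancel only when $c_1=-c_2$ \emph{and} $k_1=k_2$; opposite orientations force $k_1=-k_2=\pm1$, so no choice of target positions (and no colour of the enclosed bottom jewels) can make the two contributions cancel --- a pair of degree-$\pm1$ boxed jewels cancels only when the orientations agree and the colours differ, which is exactly the case \emph{not} covered by this lemma. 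Whatever the transport procedure does, applying it (and its mirror image) to the two cycles deposits in the bottom duplex a sock whose invariant is $\pm(q+q^{-1})$ (or $\pm(q-q^{-1})$ in the most favourable misreading), never $0$. Consequently the appeal to Proposition \ref{prop:moreSpace} is unavailable; worse, Proposition \ref{prop:twoFloorFlipInvariant} shows that such a bottom sock can \emph{never} be dissolved by flips confined to the bottom two floors, however much two-floor space is added. Your construction therefore only relocates the original obstruction from the top pair of floors to the bottom pair.

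The actual content of the lemma is that the annihilation requires flips that couple the two pairs of floors while the two cycles interact vertically. The paper transports only \emph{one} boxed jewel downstairs by the manoeuvre of Lemma \ref{lemma:jewelGoesDown}, slides it within the bottom two floors until it sits directly underneath the other boxed jewel, and then eliminates the vertically stacked pair by an explicit local flip sequence (Figure \ref{fig:jewelsCancel}) whose flips span the two middle floors; it is this step, and not any two-story argument, that removes the $q+q^{-1}$-type obstruction, consistently with the twist being preserved since $P_t'(1)=0$ both before and after. Your deferred ``mechanical local verification'' cannot be just another instance of Figure \ref{fig:jewelGoesDown}: any procedure that merely delivers both cycles into the bottom duplex leaves an invariant obstruction there, so the missing inter-floor cancellation is precisely the heart of the proof.
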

\begin{proof}
\begin{figure}[ht]%
\centering
\includegraphics[width=0.55\columnwidth]{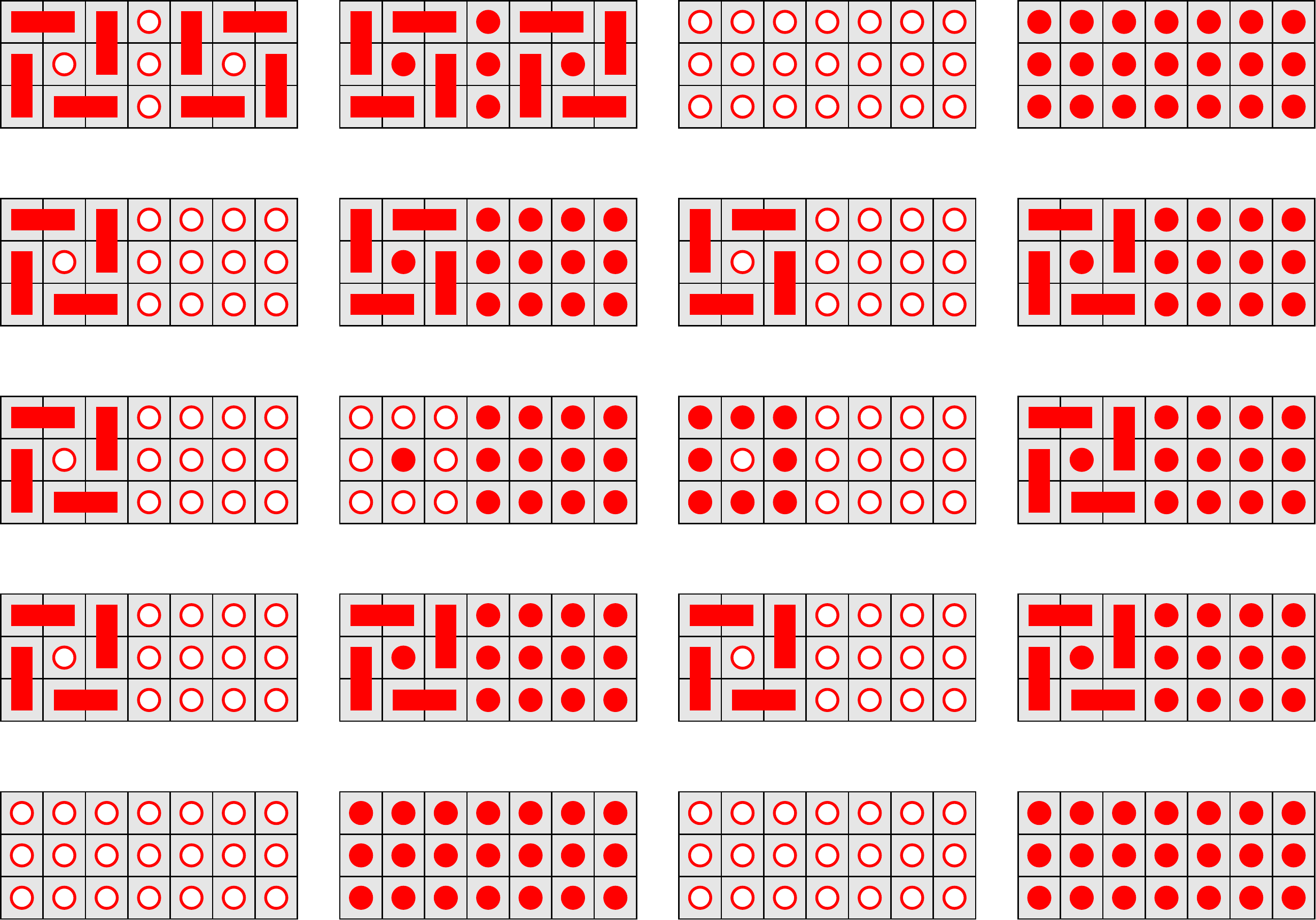}%
\caption{Illustration of how two boxed jewels with same sign but opposite degree cancel. One boxed jewel is transported to the bottom floor and there it moves so that it is exactly under the other boxed jewel (this is the first step). From then on, it is a relatively straightforward sequence of flips, and the three bottom drawings show some of the intermediate steps.}%
\label{fig:jewelsCancel}%
\end{figure}
The basic procedure is illustrated in Figure \ref{fig:jewelsCancel}. First, one jewel can be transported to the bottom floor using the procedure in Figure \ref{fig:jewelGoesDown}. There it can be moved so that it is right under the other boxed jewel, when they can easily be flipped into a dimer with only $z$ dimers.
\end{proof}

In short, what Lemmas \ref{lemma:jewelGoesDown} and \ref{lemma:jewelsCancel} imply is that a tiling $t$ whose sock is untangled and whose invariant is $P_t(q) = a_0 + \sum_{n \neq 0} a_n q^n$ can be embedded in a box with four floors in such a way that this embedding is in the same flip connected component as the embedding of another tiling $\tilde{t}$ (of a duplex region) whose sock is untangled and whose invariant is $P_{\tilde{t}}(q) = \tilde{a_0} + \left(\sum_{n \neq 0} na_n \right) q = \tilde{a_0} + P_t'(1)q$.

Using these last results, we can prove Proposition \ref{prop:embedFourFloors} as follows: let $t_0$ and $t_1$ be two tilings of a duplex region such that $P_{t_0}'(1) = P_{t_1}'(1)$. By Lemmas \ref{lemma:homotopicToUntangled} and \ref{lemma:equivEmbeddingHomotopy}, there exists a two-floored box $\tilde{B}$ where the embeddings of $t_0$ and $t_1$ are in the same connected component, respectively, as $\tilde{t_0}$ and $\tilde{t_1}$, two tilings whose sock is untangled. By Lemmas \ref{lemma:jewelGoesDown} and \ref{lemma:jewelsCancel} and the previous paragraph, there exists a box with four floors $B$ where the embeddings of $\tilde{t_0}$ and $\tilde{t_1}$ lie in the same connected component, respectively, as the embeddings of $\hat{t_0}$ and $\hat{t_1}$ (of the same duplex region), with invariants $P_{\hat{t_0}}(q) = a_0 + P_{t_0}'(1)q$ and $P_{\hat{t_1}}(q) = b_0 + P_{t_1}'(1) q$. Since $P_{t_0}'(1) = P_{t_1}'(1)$, it follows that $a_0 = b_0$ and so $P_{\hat{t_0}}(q) = P_{\hat{t_1}}(q)$. By Proposition \ref{prop:moreSpace}, the box $B$ can be chosen such that the embeddings of $\hat{t_0}$ and $\hat{t_1}$ lie in the same flip connected component; this concludes the proof.  

\section{Conclusion}
\label{sec:conclusion}
In this article, we looked at tilings of two-story regions by domino brick pieces, and our main concern were the properties of the flip. Flip invariance for tilings of more general regions, with an arbitrary number of floors, is discussed in \cite{segundoartigo}. Our results are summarized in the following theorems:

\begin{theo}[Properties of the invariant]
\label{theo:invariantProperties}
The polynomial $P_t(q)$, defined in sections \ref{sec:twoIdenticalFloors} and \ref{sec:generalTwoStory}, has the following properties:
\begin{enumerate}[label=(\roman*),topsep=0pc]
	\item \label{item:changeGhostCurves} If $R$ is a two-story region, then $P_t$ is uniquely defined for all tilings $t$ of $R$ up to a choice of ghost curves. Moreover, a different choice of ghost curves changes the polynomial $P_t$ for all tilings $t$ in the same way: by multiplying them by the same integer power of $q$.
	
	\item \label{item:flipInvariant} If $t_0$ and $t_1$ are tilings of a two-story region that lie in the same flip connected component, then $P_{t_0} = P_{t_1}$.
	
	\item \label{item:moreSpace} If $R$ is a duplex region and $t_0$ and $t_1$ are two tilings of $R$, then $P_{t_0} = P_{t_1}$ if and only if there exists a box with two floors $B$ such that the embeddings of $t_0$ and $t_1$ in $B$ lie in the same flip connected component.
	
	\item \label{item:fourFloursEmbedding} If $R, t_0, t_1$ are as above and $P_{t_0}'(1) = P_{t_1}'(1)$, then there exists a box with four floors $B$ such that the embeddings of $t_0$ and $t_1$ in $B$ lie in the same flip connected component.
	
\end{enumerate}

\end{theo}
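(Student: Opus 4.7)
The plan is to assemble the four items from propositions already established in the paper; I do not expect essentially new difficulty, since the theorem is a consolidation of the results of Sections \ref{sec:twoIdenticalFloors}--\ref{sec:embedFourFloors}. My job is to cite the correct statement for each item and, for items \ref{item:moreSpace} and \ref{item:fourFloursEmbedding}, track the directions of the implications and the choice of enclosing box.

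For item \ref{item:changeGhostCurves}, I would refer to the discussion immediately following Proposition \ref{prop:twoStoryFlipInvariant}. The key observation is that the symmetric difference of two sets of ghost curves is a collection of closed curves, each of which never touches a closed square common to both floors. Hence each such cycle has a well-defined winding number around every possible jewel position, and because ghost curves can only cross holes (never jewel positions), this integer does not depend on which jewel is chosen. Summing these winding numbers gives a single $k \in \ZZ$ such that $P_{t,1}(q) = q^k P_{t,2}(q)$ for every tiling $t$ simultaneously, which is the statement.

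For item \ref{item:flipInvariant}, Proposition \ref{prop:twoStoryFlipInvariant} handles a single flip, so the general case follows by induction on the length of a flip sequence. For item \ref{item:moreSpace}, the forward direction is item \ref{item:flipInvariant} applied inside $B$ (using that $P_{\hat{t}} - P_t$ depends only on $B$ and $R$, not on $t$), while the converse is Proposition \ref{prop:moreSpace}, proved by combining Lemma \ref{lemma:equivEmbeddingHomotopy} (embeddings in some two-floored box correspond to flip homotopies of socks in $\ZZ^2$), Lemma \ref{lemma:homotopicToUntangled} (any sock is flip-homotopic to an untangled one), and Lemma \ref{lemma:untangledSocksInvariant} (untangled socks with the same invariant are flip-homotopic).

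For item \ref{item:fourFloursEmbedding}, I would invoke Proposition \ref{prop:embedFourFloors}. Concretely, the plan is first to use Lemmas \ref{lemma:homotopicToUntangled} and \ref{lemma:equivEmbeddingHomotopy} to pass, inside a sufficiently large two-floored box, from $t_0, t_1$ to tilings $\tilde{t_0}, \tilde{t_1}$ whose socks are untangled. Next, in a four-floored box, apply Lemma \ref{lemma:jewelGoesDown} to replace each boxed jewel of degree $n \neq 0$ by $|n|$ boxed jewels of degree $\sgn(n)$ of the same sign, and Lemma \ref{lemma:jewelsCancel} to annihilate pairs of boxed jewels of opposite degree but equal sign. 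The resulting tilings $\hat{t_0}, \hat{t_1}$ have invariants of the form $a_{0,i} + P_{t_i}'(1)\,q$; the hypothesis $P_{t_0}'(1) = P_{t_1}'(1)$ then forces the constant terms to coincide as well, and a final application of item \ref{item:moreSpace} (Proposition \ref{prop:moreSpace}) in a suitable enlargement of $B$ closes the argument. The only bookkeeping concern is choosing $B$ large enough to accommodate every lemma simultaneously; this is handled by taking the maximum of the sizes required at each stage.
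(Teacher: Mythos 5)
Your proposal is correct and follows essentially the same route as the paper, which proves the theorem by citing the Section \ref{sec:generalTwoStory} discussion for \ref{item:changeGhostCurves}, Proposition \ref{prop:twoStoryFlipInvariant} for \ref{item:flipInvariant}, Proposition \ref{prop:moreSpace} (with the easy converse noted there) for \ref{item:moreSpace}, and Proposition \ref{prop:embedFourFloors} for \ref{item:fourFloursEmbedding}. Your extra bookkeeping --- the induction on flip sequences, the observation that $P_{\hat{t}}-P_t$ is independent of $t$, and the box-size maximum --- matches the remarks the paper makes around those propositions.
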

\begin{proof}
For \ref{item:changeGhostCurves}, see Section \ref{sec:generalTwoStory}, and the discussion in the last three paragraphs in particular; 
\ref{item:flipInvariant} is Proposition \ref{prop:twoStoryFlipInvariant} (see also Proposition \ref{prop:twoFloorFlipInvariant});
\ref{item:moreSpace} is Proposition \ref{prop:moreSpace}; 
finally, \ref{item:fourFloursEmbedding} is Proposition \ref{prop:embedFourFloors}.
\end{proof}

\begin{rem}
The converse of \ref{item:fourFloursEmbedding} in Theorem \ref{theo:invariantProperties} also holds, that is, if there exists a box with four floors $B$ such that the embeddings of $t_0$ and $t_1$ in $B$ lie in the same flip connected component, then $P_{t_0}'(1) = P_{t_1}'(1)$. Indeed, we prove in \cite{segundoartigo} that the twist $\Tw(t)$, which equals $P_t'(1)$ in this case, is a flip invariant.
\end{rem}

\begin{theo}[Properties of the trit]
\label{theo:tritProperties}
The trit, which was introduced in Section \ref{sec:defsAndNotations}, has the following properties:
\begin{enumerate}[label=(\roman*)]
	\item \label{item:posTrit} If $t_0$ and $t_1$ are two tilings of a two-story region and $t_1$ can be reached from $t_0$ after a single positive trit, then $P_{t_1}(q) - P_{t_0}(q) = q^k(q-1)$ for some $k \in \ZZ$; as a consequence, $P_{t_1}'(1) - P_{t_0}'(1) = 1$.
	\item \label{item:connectivityFlipsTrits} The space of domino brick tilings of a duplex region is connected by flips and trits. In other words, if $t_0$ is any tiling of such a region and $t_1$ is any other tiling of the same region, then there exists a sequence of flips and trits that take $t_0$ to $t_1$.
\end{enumerate}
\end{theo}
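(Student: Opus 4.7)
The plan is to assemble the theorem directly from the propositions already established in Sections \ref{sec:effectOfTritsOnPt} and \ref{sec:connectedFlipsTrits}.

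For part \ref{item:posTrit}, I would simply cite Proposition \ref{prop:posTrit}, whose proof rests on the local picture in Figure \ref{fig:postrit_contrib}: a positive trit is a local move inside a $2\times 2\times 2$ cube that alters the position of exactly one jewel $j$ and exchanges an adjacent non-$z$ edge between the two floors, so in the associated drawing it pushes $j$ across a single boundary of the altered cycle. The color of $j$ is preserved, while $k_t(j)$ changes by $\pm 1$; the sign analysis of the two relative positions of the cycle shows that if $j$ is black its contribution goes from $q^k$ to $q^{k+1}$ and if $j$ is white its contribution goes from $-q^k$ to $-q^{k-1}$. In both cases $P_{t_1}(q) - P_{t_0}(q) = q^k(q-1)$ for a suitable $k$, and differentiating at $q = 1$ gives $P_{t_1}'(1) - P_{t_0}'(1) = 1$.

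For part \ref{item:connectivityFlipsTrits}, I would invoke Proposition \ref{prop:connectivityFlipsTrits} and trace its proof through the sock language developed in Section \ref{sec:twoFloorsMoreSpace}. The key reduction is that two tilings of a duplex region $R$ lie in the same flip-and-trit class if and only if their associated socks in $G(R)$ are flip-and-trit homotopic, and that trits correspond exactly to the trit move of Figure \ref{fig:tritstep} which pulls a jewel across a single cycle. The heart of the argument is Lemma \ref{lemma:flipAndTritHomotopy}, which shows that every sock in $G(R)$ is flip-and-trit homotopic to the empty sock. That lemma proceeds by contradiction: pick a counterexample $s$ of minimal area, choose an innermost cycle $\gamma$ (no cycle lies inside it, so interior vertices are all jewels), and let $v$ be the rightmost among the bottommost vertices of $\gamma$. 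The diagonal neighbor $w = v - (1,1)$ lies in $G(R)$ precisely because the floors of $R$ are simply connected, so $\gamma$ cannot enclose a hole. A case split on whether $w$ is a jewel (and, if not, on the orientation of the edges incident to $w$) yields in every case either a flip move or a trit move reducing the area, as summarised in Figure \ref{fig:tritAndFlipHomotopy_cases}.

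The main obstacle, and the one I would spend the most care on in a fresh write-up, is the simple-connectivity step: one must argue that the interior diagonal vertex $w$ actually belongs to $G(R)$, for otherwise the cycle $\gamma$ could encircle a puncture and neither the flip moves nor the trit move would be available. This is exactly where the hypothesis that $R$ is duplex (both floors identical and simply connected) is used; for unequal floors the ghost curves can create obstructions that trap cycles around holes, which is why the analogous statement fails in general, as Figure \ref{fig:unequal_twofloors_notConnectedFlipsTrits} illustrates. Once this innermost-cycle analysis is in hand, the theorem follows by induction on the area of the sock.
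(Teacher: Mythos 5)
Your proposal is correct and follows the paper's own route exactly: part \ref{item:posTrit} is Proposition \ref{prop:posTrit} (with the derivative consequence noted right after it, recorded as Corollary \ref{coro:tritstwofloors}), and part \ref{item:connectivityFlipsTrits} is Proposition \ref{prop:connectivityFlipsTrits}, proved via the sock formulation and the minimal-area argument of Lemma \ref{lemma:flipAndTritHomotopy}. Your emphasis on why $w = v-(1,1)$ must lie in $G(R)$ (simple connectivity of the identical floors) is precisely the point where the paper uses the duplex hypothesis, so nothing is missing.
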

\begin{proof}
\ref{item:posTrit} is Proposition \ref{prop:posTrit} and Corollary \ref{coro:tritstwofloors}; \ref{item:connectivityFlipsTrits} is Proposition \ref{prop:connectivityFlipsTrits}.
\end{proof}

\bibliography{biblio}{}
\bibliographystyle{plain}

\bigskip
\noindent
Departamento de Matem\'atica, PUC-Rio \\
Rua Marqu\^es de S\~ao Vicente, 225, Rio de Janeiro, RJ 22451-900, Brazil \\
\url{milet@mat.puc-rio.br}\\
\url{nicolau@mat.puc-rio.br}

\end{document}